\theoremstyle{plain}
\newtheorem{theorem}{\bf Theorem}[section]
\newtheorem{cor}[theorem]{Corollary}
\newtheorem{lemma}[theorem]{Lemma}
\newtheorem{prp}[theorem]{Proposition}
\newtheorem{problem}[]{Problem}
\newcounter{main}
\newtheorem{thmnonumber}[main]{\bf Main Theorem}
\newtheorem{prpnonumber}[main]{\bf Proposition}
\theoremstyle{definition}
\newtheorem{rem}[theorem]{Remark}
\newtheorem{definition}[theorem]{Definition}
\newtheorem{example}[theorem]{Example}
\newtheorem{conjecture}[theorem]{Conjecture}
\DeclareMathAlphabet{\mathpzc}{OT1}{pzc}{m}{it}
\newcommand{\II}{\mathbb{I}}
\newcommand{\R}{\mathbb{R}}
\def \cd{\operatorname{cdepth}}
\newcommand{\Lk}{\mathrm{Link} \,}
\newcommand{\sd}{\mathrm{sd}}
\newcommand{\Ziv}{\v{Z}ivaljevi\`{c} }
\begin{document}

\author{
Bruno Benedetti \thanks{The author was supported by: the DFG Collaborative Research Center TRR109, ``Discretization in Geometry and Dynamics''; the G\"{o}ran Gustaffsson Foundation, Stockholm, Sweden; the Swedish Research Council, grant ``Triangulerade M{\aa}ngfalder, Knutteori i diskrete Morseteori''; the European Research Council, under the European Union's seventh framework programme (FP7/2007-2013) / ERC Grant agreement no.~247029-SDModels.} \\
\small
Dept.\ Mathematik und Informatik, Freie Univ. Berlin\\
\small \url{bruno@zedat.fu-berlin.de}}

\date{\small December 10, 2014}
\title{Smoothing discrete Morse theory}
\maketitle

\begin{abstract}
After surveying classical notions of PL topology of the Seventies, we clarify the relation between Morse theory and its discretization by Forman. We show that PL handles theory and discrete Morse theory are equivalent, in the sense that every discrete Morse vector on some PL triangulation is also a PL handle vector, and conversely, every PL handle vector is also a discrete Morse vector on some PL triangulation. 
It follows that in dimension up~to~$7$, every discrete Morse vector on some PL triangulation is also a smooth Morse vector; the vice versa is true in all dimensions.  This revises and improves a result by Gallais.

Some further consequences of our work are:
\begin{compactenum}[(1)]
\item For $d \ne 4$, every simply connected smooth $d$-manifold admits locally constructible triangulations. In contrast, the Mazur $4$-manifold has no locally constructible triangulation. (This solves a question by \Ziv  and completes work by the author and Ziegler.)
\item The Heegaard genus of $3$-manifolds can be characterized as the smallest $g$ 
for which some triangulation of the manifold has discrete Morse vector $(1,g,g,1)$.\\
(This allows for heuristics to bound the Heegaard genus of any 3-manifold.)
\item Some non-PL $5$-spheres admit discrete Morse functions with only 2 critical faces. \\
(This result, joint with Adiprasito, completes the Sphere Theorem by Forman.)
\end{compactenum} 
\end{abstract}

\enlargethispage{4mm}
{\small \tableofcontents}

\section*{Introduction}
The term ``$d$-dim\-ensional manifold'' was introduced by Riemann in 1854 \cite{Ratcliffe, Riemann}. Since then, the notion has been used and adapted so many times, that it is rare to find two books adopting the same definition. Here by \emph{topological manifolds} we mean ``compact Hausdorff spaces locally homeomorphic to $\mathbb{R}^d$'', or, in the case of manifolds with boundary, to a closed halfspace of $\mathbb{R}^d$.\footnote{Other authors adopt weaker definitions, e.g.\ by removing the Hausdorff requirement, or by dropping the compactness assumption in favour of paracompactness. }
Many topological manifolds carry also other important structures. 
For example, \emph{smooth manifolds} are locally diffeomorphic to $\mathbb{R}^d$, which allows us to do differential calculus on them. Finally, \emph{triangulable manifolds} are homeomorphic to simplicial complexes, which allows us to tackle them algorithmically. 
These three structures (topological, differential, combinatorial) are inequivalent: smooth manifolds are only some of the triangulable manifolds, which in turn are only some of the topological manifolds.

In the Thirties, Morse had the idea to analyze a  smooth manifold by considering a generic smooth real-valued function defined on it. 
The highlight of the theory is that it suffices to look at the finitely many critical points (maxima, minima, saddles) of the function, to understand the homotopy type of the whole manifold. 

\begin{theorem}[Morse~\cite{Morse}] \label{thm:morse}
If some $d$-dimensional smooth manifold $M$ admits a generic smooth function $f:M\rightarrow\R$ that has $c_i$ critical points of index $i$ ($i=0,\ldots, d$), then $M$ is homotopy equivalent to a cell complex (called \emph{Morse complex}) with $c_i$ cells of dimension $i$.
\end{theorem}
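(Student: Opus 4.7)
The plan is to order the critical values $v_1 < v_2 < \cdots < v_N$ of $f$ (finitely many, because $M$ is compact and critical points are isolated by non-degeneracy) and build $M$ up to homotopy equivalence by inductively analyzing the sublevel sets $M_t := f^{-1}((-\infty,t])$. I would fix an auxiliary Riemannian metric on $M$, which converts $f$ into the gradient vector field $\nabla f$ that drives the whole argument.

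The first step is to prove the Morse Lemma: near any critical point $p$ of index $k$, there exist smooth local coordinates $(x_1,\dots,x_d)$ centered at $p$ in which
\[
f(x) \;=\; f(p) - x_1^2 - \cdots - x_k^2 + x_{k+1}^2 + \cdots + x_d^2.
\]
This reduces via the inverse function theorem to the fact that the Hessian is non-degenerate at $p$ (which is precisely the genericity assumption) and can then be diagonalized. The second step is the \emph{no-critical-value lemma}: if $[a,b]$ contains no critical values of $f$, then $M_b$ deformation retracts onto $M_a$, obtained by flowing along a suitably rescaled gradient vector field $-\nabla f/\|\nabla f\|^2$ so that $f$ decreases at unit rate.

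The third step is the \emph{handle-attachment lemma}: if $[c-\varepsilon, c+\varepsilon]$ contains a single critical value $c$ corresponding to a unique critical point $p$ of index $k$, then $M_{c+\varepsilon}$ is homotopy equivalent to $M_{c-\varepsilon} \cup_{\varphi} e^k$, with attaching map $\varphi\colon \partial e^k \to M_{c-\varepsilon}$. In the Morse coordinates the cell $e^k$ is modelled by the disk $\{x_1^2+\cdots+x_k^2\le\varepsilon,\ x_{k+1}=\cdots=x_d=0\}$; its boundary sphere sits in $M_{c-\varepsilon}$, and the gradient flow (away from a small neighborhood of $p$) is used to deformation retract the ``difference'' $M_{c+\varepsilon}\setminus M_{c-\varepsilon}$ onto this cell glued to $M_{c-\varepsilon}$. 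If several critical points share the value $c$, one performs the construction simultaneously in disjoint coordinate neighborhoods, attaching one cell per critical point.

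Finally, iterating these three lemmas from $M_{v_1-\varepsilon}=\emptyset$ upward through $v_1,\dots,v_N$ produces a CW complex with exactly $c_i$ cells of dimension $i$ that is homotopy equivalent to $M$. The main technical obstacle is the handle-attachment lemma: one must interpolate carefully between the quadratic local picture supplied by the Morse Lemma and the global gradient dynamics, verifying that the attaching map $\varphi$ depends continuously on the data and that the resulting deformation retract is well defined on the overlap. Once that is established, the rest of the argument is an induction on $N$ together with the elementary observation that attaching a cell of dimension $i$ raises the $i$-cell count by one and leaves the other dimensions unchanged.
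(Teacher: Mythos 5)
Your proposal is correct and follows essentially the same route the paper relies on: the paper gives no proof of Theorem~\ref{thm:morse} itself but cites Morse, and its Lemma~\ref{lem:Morse} (with proof deferred to Milnor and Matsumoto) is exactly your no-critical-value and handle-attachment lemmas, assembled by the same induction over critical values. The only step you gloss is the passage from ``a space obtained by successively attaching cells'' to ``homotopy equivalent to an honest CW complex,'' which needs Whitehead's lemmas that homotopic attaching maps give homotopy equivalent adjunction spaces and that cell attachments can be pushed across a homotopy equivalence (plus cellular approximation); this is standard but is a genuine ingredient of Milnor's argument rather than an elementary observation.
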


From now on, the assumption above will be shortened as ``$M$ has \emph{Morse vector} 
$\mathbf{c}=(c_0, \ldots, c_d)$''. The Morse vector in general does not determine the Morse complex nor the homotopy type of $M$, but it yields useful information: using cellular homology, one can see that the Morse vector is always coordinate-wise greater than or equal to the \emph{Betti vector} $\mathbf{\beta} = (\, \beta_0(M),  \ldots, \beta_d (M) \,)$. 

In 2000, Forman published a combinatorial version of Morse theory, which applies to arbitrary simplicial complexes --- hence in particular to triangulations of manifolds. Simplifying a bit Forman's definition, a discrete Morse function on a simplicial complex $C$ is a map $f : C \rightarrow \mathbb{Q}$ that satisfies three properties: 
\begin{compactenum}[(i)]
\item (Monotonicity) If $\sigma \subset \tau$, then $f(\sigma) \le f(\tau)$.
\item (Semi-injectivity) For each $q \in \mathbb{Q}$, the cardinality of $f^{-1} (q)$ is at most $2$.
\item (Genericity) If  $f(\sigma) = f(\tau)$, then either $\sigma \subset \tau$ or $\tau \subset \sigma$.
\end{compactenum}
If one defines \emph{critical faces} as the faces $\sigma$  at which $f$ is injective (i.e.\ the cardinality of $f^{-1} (f(\sigma))$ is equal to $1$), Forman discovered that it suffices to look at critical faces to understand the starting complex up to homotopy, exactly like in Morse theory:

\begin{theorem} [Forman~\cite{FormanUSER}]
If some $d$-dimensional simplicial complex $C$ admits a discrete Morse function $f:C\rightarrow\mathbb{Q}$ that has $c_i$ critical faces of dimension $i$ ($i=0,\ldots, d$), then $C$ is homotopy equivalent to a cell complex with $c_i$ cells of dimension $i$.
\end{theorem}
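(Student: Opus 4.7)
The plan is to induct on a filtration of $C$ by sublevel complexes of $f$. For each $c \in \mathbb{Q}$, set $C(c) := \{\sigma \in C : f(\sigma) \le c\}$; monotonicity (i) makes $C(c)$ a subcomplex. Listing the finitely many values of $f$ in increasing order $c_1 < c_2 < \cdots < c_N$ gives a filtration $\emptyset = C(c_0) \subset C(c_1) \subset \cdots \subset C(c_N) = C$. The whole argument reduces to understanding a single step $C(c_{k-1}) \hookrightarrow C(c_k)$, where one adds exactly the faces in $f^{-1}(c_k)$, a set of cardinality $1$ or $2$ by semi-injectivity (ii).

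\emph{Critical step.} If $f^{-1}(c_k) = \{\sigma\}$, then $\sigma$ is by definition critical. Monotonicity combined with injectivity of $f$ at $\sigma$ forces $f(\rho) < c_k$ for every proper face $\rho \subsetneq \sigma$, so the simplicial boundary $\partial \sigma$ already sits inside $C(c_{k-1})$. Hence $C(c_k)$ is obtained from $C(c_{k-1})$ by attaching a single cell of dimension $\dim \sigma$ along its boundary.

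\emph{Regular step.} If $f^{-1}(c_k) = \{\sigma, \tau\}$, genericity (iii) puts the two faces in a containment relation; say $\sigma \subsetneq \tau$. If one had $\dim \tau - \dim \sigma \ge 2$, any intermediate $\rho$ with $\sigma \subsetneq \rho \subsetneq \tau$ would satisfy $f(\rho) = c_k$ by monotonicity, violating (ii). So $\dim \tau = \dim \sigma + 1$. To see that $\sigma$ is a free face of $\tau$ in $C(c_k)$, observe that any other coface $\tau' \supsetneq \sigma$ satisfies $f(\tau') \ge c_k$ by monotonicity and $f(\tau') \ne c_k$ because $f^{-1}(c_k) = \{\sigma, \tau\}$; hence $\tau' \notin C(c_k)$. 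Therefore $C(c_k) \searrow C(c_{k-1})$ is an elementary collapse, which is a (simple) homotopy equivalence.

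Assembling the two cases, $C$ is homotopy equivalent to a CW complex built by attaching exactly one $i$-cell for every critical face of dimension $i$, which is the desired conclusion. The delicate part is the freeness verification in the regular step: none of the axioms (i)--(iii) suffices in isolation, but together they rigidify the matching $\sigma \leftrightarrow \tau$ enough that no hidden coface of $\sigma$ inside $C(c_k)$ can obstruct the collapse. I expect this to be the only nontrivial step; the rest is a direct unpacking of the three axioms together with the standard fact that elementary collapses preserve homotopy type.
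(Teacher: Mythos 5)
Your argument is correct, and it is essentially Forman's own proof of this theorem (the paper does not prove it but simply cites \cite{FormanUSER}, where the sublevel-complex induction you describe is exactly what is done): filtering by values of $f$, a singleton fiber attaches a cell along its already-present boundary, and a paired fiber is an elementary collapse, with your freeness check being the right key point. The only step you wave at is the final assembly --- one needs the standard fact (as in Milnor's Morse theory, Lemma 3.7) that a homotopy equivalence $X\simeq Y$ induces a homotopy equivalence $X\cup_\phi e^n\simeq Y\cup_{h\circ\phi}e^n$, so that the collapses performed between cell attachments can be pushed through the later attachments --- but this is routine and does not constitute a gap.
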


Let us shorten the assumption above as ``$C$ has \emph{discrete Morse vector} $\mathbf{c}=(c_0, \ldots, c_d)$''; as before, the discrete Morse vector does not determine the homotopy type of $C$, but it bounds coordinate-wise the Betti vector of $C$ from above. This naturally leads to the following problem.

\begin{problem}
What is the relation between Morse vectors and discrete Morse vectors on a fixed topological manifold?
\end{problem}

A first important contribution to this problem came in 2010 from Gallais:

\begin{theorem}[Gallais \cite{Gallais}]
If  a smooth manifold $M$ admits a Morse vector $\mathbf{c}$, then there exists a PL triangulation $T$ of $M$ that admits $\mathbf{c}$ as discrete Morse vector. 
\end{theorem}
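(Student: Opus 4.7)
The plan is to reduce to two classical tools plus the ``PL handles $\leadsto$ discrete'' direction of the main equivalence announced in this paper's abstract. First, classical Morse theory turns the generic smooth function $f : M \to \R$ into a smooth handle decomposition of $M$ with exactly $c_i$ handles of index $i$: thicken each descending disk through a critical point of index $i$ into a handle $D^{i} \times D^{d-i}$, so that $M$ is built up by attaching all index-$i$ handles in order of increasing $i$.

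Second, I would pass from smooth handles to PL handles. Every smooth manifold admits a PL triangulation, essentially unique in the sense of Whitehead and Cairns. The attaching maps of the smooth handle decomposition are smoothly embedded copies of $S^{i-1} \times D^{d-i}$; they can be made PL by a small isotopy, possibly after a common subdivision of the ambient triangulation. The result is a PL handle decomposition of $M$ with the same handle vector $\mathbf{c}$.

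Third, I would apply the ``PL handles $\leadsto$ discrete'' theorem announced in the abstract: every PL handle vector on a manifold is realized as a discrete Morse vector on some PL triangulation of the same manifold. Combined with the previous step, this yields a PL triangulation $T$ of $M$ admitting a discrete Morse function with $c_i$ critical $i$-faces, as required.

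The main obstacle is the third step, which is the principal combinatorial content of the paper. The difficulty is that each PL handle $D^i \times D^{d-i}$ must be triangulated to admit a discrete Morse function with a single critical $i$-face, while the non-critical cells must match the previously constructed triangulation on the attaching region $S^{i-1} \times D^{d-i}$. In general, achieving this compatibility forces $T$ to be substantially finer than any a priori chosen initial triangulation, which is why Gallais' original proof used iterated barycentric subdivisions, and why the quantitative refinement mentioned as consequence~(1) of the abstract is of independent interest.
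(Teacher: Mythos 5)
Your outline follows exactly the route this paper takes: pass from the Morse function to a smooth handle decomposition with $c_i$ handles of index $i$ (Theorem~\ref{thm:smh1}), triangulate the handles and attaching maps to get a PL handle decomposition with the same vector (Remark~\ref{rem:StoPLhandle}), and then convert PL handles into a discrete Morse function (Theorem~\ref{thm:PLtoDiscrete}). The first two steps are correct and match the paper. The issue is that your third step is an appeal to the announced theorem rather than an argument, and that theorem is where essentially all of the content of the statement lives; as written, your proposal is a correct reduction, not a proof. For the record, the paper fills in that step as follows: by Lemma~\ref{lem:ShellableSubdivision} one may barycentrically subdivide until \emph{every} handle is shellable (hence endocollapsible); one then induces on the number of handles and on the dimension, using a \emph{boundary-critical} discrete Morse function on the attaching region $S^{i-1}\times\II^{d-i}$ (itself handled by the dimension induction) and the gluing Lemmas~\ref{lem:HandleAttachment0} and~\ref{lem:HandleAttachment} to produce a boundary-critical function on the union with the right count of critical interior faces; finally one dualizes via Corollary~\ref{cor:dualityMe} to get an ordinary discrete Morse function with $c_i$ critical $i$-faces. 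Note that the compatibility problem you identify (matching the triangulation of a new handle to the already-built one along the attaching region) is precisely the source of the gap in Gallais' original argument (Remark~\ref{rem:gallaisgap}); the paper avoids it not by extending triangulations handle by handle, but by subdividing the \emph{whole} ambient PL triangulation barycentrically, which refines all handles and all attaching regions simultaneously and keeps them automatically compatible. If you intend your proposal as a self-contained proof, you must supply the shellability, duality, and handle-attachment machinery; if you intend it as a derivation from the paper's main theorem, it is essentially the paper's own derivation.
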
 

The proof in \cite{Gallais} (which aims at discretizing the triangulation alongside the gradient flows) contains a minor gap, which we are able to fix in this paper (cf.\ Remark \ref{rem:gallaisgap}). In fact, here we provide an alternative proof of the following, stronger fact:

\begin{thmnonumber} [Theorem \ref{thm:PLtoDiscrete} and Remark \ref{rem:StoPLhandle}]\label{mthm:GallaisImproved}
If  a smooth manifold $M$ admits a Morse vector $\mathbf{c}$, then for \emph{any} PL triangulation $T$ of $M$ one can find an integer $r$ such that the $r$-th derived subdivision of $T$ admits $\mathbf{c}$ as discrete Morse vector. 
\end{thmnonumber}

Our proof of Main Theorem \ref{mthm:GallaisImproved} is based on three ingredients:

\begin{compactenum}[(1)] 
\item handle decompositions, a classical topological tool to chop the $d$-manifold into $d$-balls (called ``handles'') with controlled intersection; see e.g.\ Rourke--Sanderson~\cite{RourkeSanderson}; 
\item the shelling and collapsing techniques for $d$-balls established in Adiprasito's PhD thesis \cite{Adiprasito},  (cf. also \cite{AB-SSZ} and \cite[Main Theorem VI]{AB-MG&C}); these tools allow us to simultaneously triangulate all handles nicely, with a suitable (iterated) barycentric subdivision;
\item the duality for discrete Morse functions on manifolds with boundary, and the theory of composing together (dual) discrete Morse functions, as developed by the author in \cite{B-DMT4MWB}.
\end{compactenum}

\smallskip
By looking at (differences of) sublevel sets, every smooth Morse function provides automatically a handle decomposition, with each handle containing exactly one of the critical points in its interior. In 2009, Jer\u{s}e and Mramor discovered that discrete Morse theory yields a similar way to decompose (part of) a triangulation $C$ into ``descending regions'', each containing one critical face \cite{JerseMramorKosta}. There are however two main differences between this decomposition and the classical handle decomposition:
\begin{compactenum}[(i)]
\item The  Jer\u{s}e--Mramor descending regions are not necessarily full-dimensional. (This can be fixed, however, by passing to regular neighborhoods.) 
\item The union of the neighborhoods of the descending regions is not the whole complex $C$ (unless $C$ is a manifold with empty boundary \cite[Prop. 2]{JerseMramorKosta}), but rather a deformation retract of it. This retract need not be homeomorphic to $C$, as highlighted in Remark~\ref{rem:NonHomeomorphic}.
\end{compactenum}

\noindent The example in Remark \ref{rem:NonHomeomorphic}, however, is not a PL triangulation. Here PL means Piecewise Linear, and refers to the technical assumption that each vertex star must have a subdivision isomorphic to  some subdivision of the simplex. 

Our new result is that if $C$ is a PL triangulation, the union $U$ of the regular neighborhoods of the descending regions is indeed homeomorphic to $C$ --- and a discrete Morse function on the descending regions can be extended to $C$ without adding further critical cells. This was first conjectured by Bauer and Mramor-Kosta, in private communications to the author, for which we wish to thank them. The key technical tool is once again the shelling technique of Theorem \ref{thm:subdivision}, cf. \cite[Theorem 5.3]{AB-SSZ}, which allows us to strengthen Whitehead's Regular Neighborhood theorem (Corollary \ref{cor:shellbary}).

Combining this with the known relations between smooth Morse vectors and PL handle vectors (see Section \ref{sec:PLdiscrete} for the definition), we obtain the following new result, converse to Main Theorem \ref{mthm:GallaisImproved} and reflecting joint work with Adiprasito:

\begin{thmnonumber} [Theorems \ref{thm:smoothtoPL} and \ref{thm:dmttopl}]\label{mthm:A}
For manifolds admitting PL triangulations, every PL handle vector is also a discrete Morse vector on some PL triangulation, and the other way around.
Moreover, if the manifold is smooth and has dimension $\le 7$, then every discrete Morse vector on some PL triangulation is also a smooth Morse vector.
\end{thmnonumber}

In conclusion, if we restrict ourselves to PL triangulations and to dimensions up to $7$, smooth and discrete Morse theory are \emph{equivalent} when it comes to bounding Betti numbers. 

Using duality, we can provide an application to  {\em locally constructible} (or shortly, \emph{LC}) triangulations, introduced in \cite{DJ} and studied in \cite{BZ}. These are  triangulations of simply connected manifolds that enjoy a special inductive combinatorial structure; see Section \ref{sec:LC} for the precise definition.
In 2009 \Ziv conjectured that every simply connected smooth manifold admits some LC triangulations (personal communication). Here we are able to confirm the conjecture in all dimensions except $4$, in which it is false:

\begin{thmnonumber} [Theorem \ref{thm:zv}] \label{mthm:ziv}
Let $d \ge 2$. Every simply connected smooth $d$-manifold, $d \ne 4$, admits LC triangulations. (In fact, any PL triangulation of such manifold becomes LC after subdividing it barycentrically suitably many times.) In contrast, the Mazur $4$-manifold admits no LC triangulation.
\end{thmnonumber}

The proof makes use of Perelman's solution of the $3$-dimensional Poincar\'e conjecture. In fact, for $d=3$ the statement above \emph{is} a combinatorial reformulation of the Perelman--Poincar\'e theorem, since Durhuus--Jonsson showed with elementary methods that all LC closed $3$-manifolds are spheres (Theorem \ref{thm:DJ}).

Let us spend a few more words on the case $d=3$. All triangulations of $3$-manifolds are PL. Hence, Main Theorems \ref{mthm:GallaisImproved} and \ref{mthm:A} imply that a closed $3$-manifold $M$ admits a Heegaard splitting of genus $g$ if and only if some triangulation of $M$ has discrete Morse vector $(1,g,g,1)$. This allows us to re-write in the language of discrete Morse theory a recent result by Li \cite{TaoLi}:  For every $g>0$, there is a  $3$-manifold $M$ such that any discrete Morse function on any triangulation of $M$ has more than $g + \operatorname{rank} (M)$ critical edges (Corollary \ref{cor:li}).  
This shows a topological obstruction to the existence of nice discrete Morse functions. Unlike the knot-theoretic obstruction discussed in \cite{B-DMT4MWB}, this obstruction cannot be removed by performing convenient subdivisions. Moreover, it allows for quick experimental methods to understand the Heegaard genus, or at least to bound it from above.

Finally, we turn to an issue left open by Forman's Sphere Theorem, which claims that every closed manifold with discrete Morse vector $(1, 0, \ldots, 0, 1)$ is homeomorphic to the $d$-sphere. Forman's proof distinguishes two cases: the PL case, where the conclusion follows from a result by Whitehead, and the non-PL case, in which one has to use the Poincar\'e conjecture.  However, due to the lack of examples, it is unclear whether the non-PL case is void or not. Here using a result by Adiprasito, we show it is not:

\begin{prpnonumber}[Proposition \ref{prp:forman}]
\label{mthm:forman}
For each $d\geq 5$, some non-PL triangulation of the $d$-sphere admits a discrete Morse function with only two critical faces. 
\end{prpnonumber}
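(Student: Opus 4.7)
The plan is to exhibit, for each $d \ge 5$, a non-PL triangulation of $S^d$ that is endo-collapsible in the sense that the deletion of a single facet leaves a complex which collapses to a point. By the standard equivalence underlying Forman's discretization of Morse theory, this is equivalent to admitting a discrete Morse function with exactly two critical faces. My starting point is the Cannon--Edwards double-suspension theorem: if $H$ is a triangulated homology $3$-sphere different from $S^3$ (for instance the Poincar\'e homology sphere), then the double suspension $\Sigma^2 H$ is homeomorphic to $S^5$; the inherited simplicial structure is not PL, because the link of an outer suspension vertex is $\Sigma H$, which is not even a topological manifold (its links at the two inner suspension vertices are copies of $H \ne S^3$).

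Fix any facet $\sigma$ of $H$, denote the four suspension vertices by $n_1, s_1, n_2, s_2$, and set $\widetilde\sigma := n_1 * n_2 * \sigma$. The link of $\sigma$ in $\Sigma^2 H$ is the join $\{n_1,s_1\} * \{n_2,s_2\} \cong S^1$, so the open star of $\sigma$ contains exactly nine simplices. I claim that $\Sigma^2 H \setminus \{\widetilde\sigma\}$ collapses onto $\Sigma^2(H \setminus \sigma)$ in four elementary steps: after deleting $\widetilde\sigma$, the simplex $n_2 * \sigma$ has unique remaining coface $s_1 * n_2 * \sigma$ and so is free; collapsing this pair makes $s_1 * \sigma$ free in $s_1 * s_2 * \sigma$; then $n_1 * \sigma$ becomes free in $n_1 * s_2 * \sigma$; and finally $\sigma$ is free in $s_2 * \sigma$. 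All nine simplices of the open star are thereby removed, and what remains is $\Sigma^2(H \setminus \sigma)$.

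What is left is to collapse $\Sigma^2(H \setminus \sigma)$. Topologically this complex is a $5$-ball (removing the open star of $\sigma$ deletes an open $5$-ball from $S^5$), but combinatorially it is non-PL: the link of $n_1$ here is $\Sigma(H \setminus \sigma)$, whose link at $n_2$ is $H \setminus \sigma$, not even homotopy equivalent to a $3$-ball since $\pi_1(H \setminus \sigma) = \pi_1(H) \ne 1$. Here I would invoke Adiprasito's shelling and collapsibility machinery from his thesis, refined in \cite{AB-SSZ}: after a suitable number of iterated barycentric subdivisions, the contractible complex $\Sigma^2(H \setminus \sigma)$ becomes collapsible. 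Applying the same subdivision to the entire $\Sigma^2 H$ yields a triangulation that is still non-PL (subdivisions cannot upgrade a non-PL link to a PL one) and still admits the four collapses around $\widetilde\sigma$, so it carries a discrete Morse function with two critical faces. For $d \ge 6$ one iterates: either join with $\partial \Delta^{d-5}$, or use the double suspension of a higher-dimensional non-trivial homology sphere, combining Morse functions via the composition technique of \cite{B-DMT4MWB} so as to keep the Morse vector at $(1,0,\ldots,0,1)$.

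The main obstacle is the collapsibility of the non-PL $5$-ball $\Sigma^2(H \setminus \sigma)$: contractibility does not imply collapsibility in general (cf.\ Bing's house of two rooms), so Adiprasito's subdivision/shelling result is indispensable. Everything else --- the four elementary collapses around $\sigma$ and the verification that subdivision preserves non-PL-ness --- is routine combinatorial bookkeeping once the link structure $\{n_1,s_1\} * \{n_2,s_2\} \cong S^1$ of a facet in the double suspension has been identified.
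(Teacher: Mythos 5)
Your reduction of $\Sigma^2 H$ minus the facet $n_1 * n_2 * \sigma$ down to $\Sigma^2(H \setminus \sigma)$ via four elementary collapses is correct, but the argument then breaks at exactly the hard point: you need the non-PL $5$-ball $\Sigma^2(H\setminus\sigma)$ (or some iterated barycentric subdivision of it) to be collapsible, and the machinery you invoke does not deliver this. The Adiprasito--Benedetti subdivision results (Lemma \ref{lem:ShellableSubdivision} here, \cite[Theorem 5.3]{AB-SSZ}, and \cite[Main Theorem VI]{AB-MG&C}) apply to \emph{PL} balls, respectively to subdivisions of polytopes; they say nothing about arbitrary contractible complexes or non-PL balls, and no theorem producing collapsible subdivisions of general contractible complexes is known. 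Writing $\Sigma^2(H\setminus\sigma)$ as a union of two cones over $\Sigma(H\setminus\sigma)$ does not help either, since a cone $v * X$ collapses onto its base $X$ only under strong extra hypotheses on $X$, and here $\pi_1(H\setminus\sigma)=\pi_1(H)\neq 1$ blocks every elementary criterion. This missing step is precisely where the difficulty of the whole proposition is concentrated.

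The paper's proof is structured so as to avoid this issue entirely. It constructs a non-PL $5$-sphere $C_1$ (the suspension of a Mazur manifold capped off by a cone over its boundary) whose PL-singular set $S$ is a \emph{tree}, and takes $B$ to be the second derived neighborhood of $S$ in $\sd^2 C_1$. Whitehead's regular-neighborhood lemma (Lemma \ref{lem:RegNeigh}, valid also in non-PL triangulations) gives $B \searrow S \searrow \mathrm{pt}$, so collapsibility of the non-PL ball $B$ comes for free from the one-dimensionality of the singular set rather than from any subdivision theorem; moreover $\partial B$ is the double of the Mazur manifold, hence a PL sphere admitting a shellable subdivision, so coning over $\partial B$ yields the desired non-PL $d$-sphere with two critical faces. (Your double suspension $\Sigma^2 H$ has singular set equal to a $4$-cycle, not a tree, which is why the analogous shortcut is unavailable there.) To salvage your route you would have to prove collapsibility of a subdivision of $\Sigma^2(H\setminus\sigma)$ by hand; absent that, the approach should be replaced by one in which the complex to be collapsed retracts onto a collapsible singular set.
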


In particular, there exists a non-PL $5$-ball $B$ that is endo-collapsible, that is, a $5$-ball $B$ that after the removal of any facet collapses onto $\partial B$. 
In \cite{B-DMT4MWB} we proved (using Smale's solution of the Poincar\'e conjecture) that all endocollapsible manifolds are balls. This statement implies Whitehead's theorem (``all collapsible PL manifolds are balls'') and in view of the $5$-ball $B$ above, the implication is strict.

\bigskip

\emph{\textbf{Structure}}. The paper is organized as follows. For the sake of clarity and self-containment, we decided to start with a background chapter gathering together the differential and PL topology we need. These notions can also be found in many textbooks (for example \cite{MilnorFifty}, \cite{RourkeSanderson}, \cite{Buoncristiano}, \cite{Matsumoto}), though the notation vastly differs from book to book, which often creates confusion in the notation. The reader already familiar with PL triangulations and handle decompositions should skip directly to Section \ref{sec:PLdiscrete}.

\section{Background}
In this section, we recall a few classical results on how to triangulate manifolds and decompose them into handles, either in the smooth or in the PL sense.

\subsection*{Notation}

A \emph{topological $d$-manifold} (with boundary) is a compact Hausdorff space, locally homeomorphic to $\R^d$ (or to a halfspace in $\R^d$). For simplicity, all the manifolds we consider here are connected and orientable. A \emph{chart} on a topological $d$-manifold $M$ is a pair $(U,\phi)$, where $U$ is an open subset of $M$, and $\phi$ is some homeomorphism that maps $U$ into an open subset $\phi(U)$ of $\R^d$. An \emph{atlas} is a collection of charts $(U_i,\phi_i)$ such that the $U_i$ are open sets that cover $M$, and the induced transition maps between any two charts are smooth (that is, $C^{\infty}$). 
A  \emph{smooth $d$-manifold} is a topological $d$-manifold that admits an atlas. (Different atlas may give rise to the same manifold, so one usually requires the atlas to be maximal; see \cite[Lemma 1.10]{Lee} for details.)

A \emph{(simplicial) complex} is a non-empty, finite family $C$ of simplices in some $\R^k$ such that any two of them intersect at a common face, and every face of every simplex in $C$ is also an element of $C$. The \emph{dimension} of a complex is the maximum dimension of a simplex in it; \emph{$d$-complex} is short for ``$d$-dimensional simplicial complex''. A $d$-complex is \emph{pure} if all inclusion-maximal simplices in it have dimension $d$. The \emph{face poset} of a complex is the set of all of its simplices, ordered by inclusion. Two complexes are called \emph{combinatorially equivalent} if their face posets are isomorphic as posets. 
The \emph{underlying space} $|C|$ of a simplicial complex $C$ is the topological space given by the union of the simplices of $C$. If $X$ is any topological space homeomorphic to $|C|$, the complex $C$ is called a \emph{triangulation} of $X$. A \emph{subdivision} of a simplicial complex $C$ is a complex $C'$ with $|C|=|C'|$, such that for every face $F'$ of $C'$ there is a face $F$ of $C$ with $F' \subset F$. 

Two complexes are called \emph{PL-homeomorphic} if some subdivision of one is combinatorially equivalent to some subdivision of the other. Two triangulations that are PL homeomorphic can always be connected by a finite sequence of bistellar flips: See Pachner \cite{Pachner0}.
If $F$ is a simplex of a simplicial complex $C$, the \emph{star of $F$ in $C$} is the smallest subcomplex of $C$ that contains all faces of $C$ containing $F$.  
A \emph{PL ball} (resp.\ a \emph{PL sphere}) is a complex PL-homeomorphic to a simplex (resp.\ to the boundary of a simplex). A \emph{PL triangulation} of a given manifold is a triangulation in which the star of every face is a PL ball.

\subsection{The Poincar\'e conjecture}

A \emph{homology sphere} is a topological $d$-manifold with the same homology of the $d$-sphere. Not all homology spheres are spheres: Poincar\'e constructed homology $3$-spheres that are not simply connected, cf.\ \cite{BjornerLutz}. However he conjectured that all simply connected homology spheres must be spheres. After a century of progress, the conjecture proved right.

\begin{theorem}[Perelman ($d =3$) \cite{Perelman}, Freedman ($d =4$) \cite{Freedman}, Smale ($d \ge 5$) \cite{Smale}] \label{thm:poincaretop} \par
Every simply connected homology $d$-sphere is homeomorphic to the unit sphere in $\R^{d+1}$. 
\end{theorem}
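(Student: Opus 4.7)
The plan is to split the proof by dimension, since the available techniques and the genuine difficulty differ dramatically. The cases $d=1,2$ are classical classification of $1$- and $2$-manifolds, so I would dispense with them briefly. For $d \geq 5$, the clean approach is Smale's: given a simply connected homology $d$-sphere $\Sigma$, remove an open ball to obtain $W = \Sigma \setminus \operatorname{int} B^d$, which is simply connected and acyclic relative to its boundary $S^{d-1}$. Thus $(W; S^{d-1}, S^{d-1})$ is an $h$-cobordism between two $(d-1)$-spheres. Apply Smale's $h$-cobordism theorem to conclude $W \cong S^{d-1} \times [0,1]$; gluing back the ball shows $\Sigma$ is homeomorphic to $S^d$. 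The engine of the $h$-cobordism theorem itself is a handle cancellation argument whose crucial step is the Whitney trick, which requires the ambient dimension to be at least $5$ in order to push intersection pairs apart by an embedded disk — this is the hard step, and the reason why this approach does not extend to lower dimensions.

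For $d=4$, the Whitney trick fails: the obstruction is that the Whitney disks generically self-intersect and one cannot remove the intersections by a transversality argument. I would invoke Freedman's theory of Casson handles, showing that in the topological category Casson handles are homeomorphic to standard open handles. This allows the Whitney trick to be carried out topologically, yielding a topological $h$-cobordism theorem in dimension $4$ for simply connected manifolds, and hence the topological Poincaré conjecture. The main obstacle is the infinite tower construction and the delicate decomposition-theoretic argument (shrinking of decompositions) needed to identify Casson handles with standard handles — this is where essentially all the work lives and is not something one could sketch honestly in a paragraph.

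For $d=3$, I would follow Hamilton--Perelman. Equip $\Sigma$ with a Riemannian metric and flow it by the Ricci flow $\partial_t g = -2\operatorname{Ric}(g)$. Singularities will form in finite time; resolve them by Hamilton's surgery procedure, controlling the geometry of neck-like singularities via the canonical neighborhood theorem. The key new ingredients Perelman contributed are the monotonicity of the $\mathcal{W}$-functional (entropy), the no-local-collapsing theorem, and the extinction-in-finite-time result for simply connected manifolds (independently due to Perelman and Colding--Minicozzi). Together these force the flow with surgery to become extinct after finitely many steps, and tracking the topology backwards through the surgeries shows $\Sigma$ was a connect sum of spherical space forms and $S^2 \times S^1$'s; simple connectivity then collapses this to $S^3$. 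The hard part is of course the no-local-collapsing estimate and the fine structure theorems for high-curvature regions, which together occupy the bulk of Perelman's papers.

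In each dimension, then, my plan reduces to citing the machine built by the respective author; the actual proof is far beyond what one would attempt from scratch in this paper, and I would present the statement as a black-box input to the PL and discrete Morse-theoretic arguments that follow.
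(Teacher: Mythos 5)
The paper offers no proof of this statement: it is quoted as a black-box citation of Smale, Freedman, and Perelman, which is exactly the stance you take, and your dimension-by-dimension summary of where the real work lives (Whitney trick and handle cancellation for $d\ge 5$, Casson handles and decomposition-space shrinking for $d=4$, Ricci flow with surgery and finite extinction for $d=3$) is accurate. One small slip worth fixing: for $d\ge 5$ you should remove \emph{two} disjoint open balls from $\Sigma$ to obtain an $h$-cobordism between two copies of $S^{d-1}$ (or remove one ball and argue that the resulting contractible manifold with boundary $S^{d-1}$ is a ball); as written you delete a single ball yet describe the result as a cobordism with two sphere boundary components.
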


\begin{cor}\label{cor:pwb}
Every contractible $d$-manifold with simply connected boundary is homeomorphic to the unit ball in $\R^{d}$. 
\end{cor}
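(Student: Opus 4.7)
The plan is to reduce the corollary to two applications of Theorem \ref{thm:poincaretop}, bridged by Brown's Generalized Schoenflies theorem. Concretely I want to (i) show $\partial M$ is a homology $(d-1)$-sphere, hence $\partial M \cong S^{d-1}$; (ii) cap $M$ off with a ball to produce a simply connected homology $d$-sphere, hence $S^d$; and (iii) identify $M$ with a hemisphere via Schoenflies.

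For step (i), I would combine Lefschetz duality $H_k(M,\partial M) \cong H^{d-k}(M)$ with the contractibility of $M$. This forces $H_k(M,\partial M)=0$ for $k<d$ and $H_d(M,\partial M)\cong \mathbb{Z}$. Plugging these groups into the long exact sequence of the pair $(M,\partial M)$ and using $\widetilde{H}_*(M)=0$ yields $H_*(\partial M)\cong H_*(S^{d-1})$; in particular $\partial M$ is connected and a homology $(d-1)$-sphere. Together with the simply-connectedness hypothesis, Theorem \ref{thm:poincaretop} applied to $\partial M$ gives $\partial M \cong S^{d-1}$.

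For step (ii), I would glue a standard $d$-ball $B$ to $M$ along any homeomorphism $\partial B \to \partial M$, obtaining a closed $d$-manifold $N = M \cup_{\partial M} B$. For $d\ge 3$ the intersection $S^{d-1}$ is simply connected, so van Kampen gives $\pi_1(N)=0$; the Mayer--Vietoris sequence, together with the vanishing of $\widetilde{H}_*(M)$ and $\widetilde{H}_*(B)$, shows $H_*(N)\cong H_*(S^d)$. A second appeal to Theorem \ref{thm:poincaretop} identifies $N$ with $S^d$. Finally, $\partial M$ sits inside $N\cong S^d$ as a bicollared $(d-1)$-sphere (one collar from $M$, the other from $B$), so Brown's Generalized Schoenflies theorem yields that each of the two complementary closed domains is homeomorphic to a $d$-ball, hence $M\cong B^d$.

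The main obstacle is the Schoenflies step, which is the one input that genuinely goes beyond Theorem \ref{thm:poincaretop}; this is exactly what Brown's topological theorem provides, given that bicollaredness is automatic from the collar of the manifold $M$. The low-dimensional cases require only a remark: $d=1$ is immediate, and $d=2$ is vacuous since no connected compact surface has a nonempty simply connected boundary. Everything else — the homology computations, van Kampen, and the invocations of Theorem \ref{thm:poincaretop} — is routine.
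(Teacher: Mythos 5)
Your proof is correct and is exactly the standard argument the paper leaves implicit: the corollary is stated without proof as a consequence of Theorem \ref{thm:poincaretop}, and the intended derivation is precisely your chain (Lefschetz duality plus the long exact sequence to see $\partial M$ is a simply connected homology sphere, capping off to get a homotopy $d$-sphere, then Brown's Generalized Schoenflies to recover $M$ as one of the two complementary balls). Your handling of the low-dimensional and bicollaring points is also fine, so there is nothing to add.
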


It is natural to ask whether for a smooth manifolds, as in the previous two statements, ``homeomorphic'' can be replaced by ``diffeomorphic''. The answer is 
\begin{compactitem}
\item positive in dimension $d \in \{1,2,3,5,6,12,61\}$; 
\item unknown for $d=4$;
\item negative if $d \le 64$ and $d \notin \{1,2,3,4,5,6,12,61\}$ \cite{Milnor, MilnorKervaire}; see the very recent work by Behrens et al.\ for further cases \cite{BHHM}, e.g.\ for $64 \le d \le 126$;
\item unknown for large even values of $d$.
\end{compactitem}

However, in Theorem \ref{thm:poincaretop} and in Corollary \ref{cor:pwb} 
the conclusion ``homeomorphic'' can indeed be strengthened to ``PL-homeomorphic'', except possibly in dimension four:

\begin{theorem}[Perelman \cite{Perelman}, Smale \cite{Smale}, Stallings \cite{Stallings}, Zeeman \cite{ZeemanPoincare}] 
\label{thm:poincare} \par
Every simply connected PL homology $d$-sphere, if $d\neq 4$, is PL-homeomorphic to the boundary of the $(d+1)$-simplex.
\end{theorem}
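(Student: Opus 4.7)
The plan is to argue case by case on $d$, since the available tools differ sharply across dimension ranges. For $d = 2$, the classification of closed surfaces says that the only simply connected closed PL surface is PL-homeomorphic to $\partial \Delta^{3}$. For $d = 3$, Moise's theorem asserts that every topological $3$-manifold carries a PL structure that is unique up to PL-homeomorphism, so the topological Poincar\'e conjecture (Theorem \ref{thm:poincaretop}, due to Perelman) immediately upgrades to the PL category.

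For $d \geq 5$, I would reduce to the PL h-cobordism theorem. Let $M$ be a simply connected PL homology $d$-sphere, and remove from it two disjoint open PL $d$-balls $B_{1}, B_{2}$ to obtain a compact PL cobordism $W$ of dimension $d$ with $\partial W \cong S^{d-1} \sqcup S^{d-1}$. Van Kampen applied to $M = W \cup B_{1} \cup B_{2}$ (using that $S^{d-1}$ is simply connected for $d \geq 3$) forces $\pi_{1}(W) \cong \pi_{1}(M) = 1$, and a short Mayer--Vietoris computation shows that the two boundary inclusions $\partial_{\pm} W \hookrightarrow W$ are homology equivalences, hence homotopy equivalences by Whitehead's theorem. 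So $W$ is a simply connected h-cobordism between $(d-1)$-spheres. Smale's PL h-cobordism theorem, valid as soon as $\dim W \geq 6$, then provides a PL-homeomorphism $W \cong S^{d-1} \times [0,1]$; re-gluing the two $d$-balls along their boundary spheres realizes $M$ as the standard PL $d$-sphere $\partial \Delta^{d+1}$.

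The main obstacle is the borderline case $d = 5$, for which $\dim W = 5$ falls just outside the range of validity of the PL h-cobordism theorem. Here I would instead invoke the engulfing techniques of Stallings and Zeeman: by iteratively enlarging PL regular neighborhoods of lower-dimensional subcomplexes inside $M \setminus \{p\}$ for a chosen point $p$, one shows that $M \setminus \{p\}$ is PL-homeomorphic to $\R^{5}$, and one-point compactification then gives $M \cong_{\mathrm{PL}} S^{5}$. The same engulfing strategy in fact gives a uniform proof for all $d \geq 5$, at the price of being technically more involved than the h-cobordism argument.

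Finally, dimension $4$ must remain excluded, as none of the above tools is available there; indeed, the PL Poincar\'e conjecture in dimension $4$ is still open, being equivalent to the smooth $4$-dimensional Poincar\'e conjecture. This is exactly why the statement reads ``$d \neq 4$'' and why Corollary \ref{cor:pwb} can only be upgraded to ``PL-homeomorphic'' outside dimension four.
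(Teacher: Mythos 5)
The paper does not actually prove this statement: it is quoted as classical background, with the argument delegated entirely to the four cited references. Your outline is the standard unpacking of exactly those citations, and it apportions them to the correct dimension ranges ($d\le 3$ to the classification of surfaces and to Moise plus Perelman, $d\ge 6$ to the $h$-cobordism theorem, $d=5$ to Stallings--Zeeman engulfing), so in that sense it matches the paper's intent. One step in the $h$-cobordism branch should be made explicit: the final regluing presents $M$ as two PL $d$-balls identified along a PL homeomorphism of their boundary spheres, and to conclude that such a union is $\partial\Delta^{d+1}$ you need the PL Alexander trick (every PL homeomorphism of $\partial\Delta^{d}$ extends to $\Delta^{d}$ by coning over a point). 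This deserves a sentence precisely because it is the step that fails in the smooth category and produces Milnor's exotic spheres --- the reason the smooth analogue of this theorem is false for $d=7$.

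The genuine gap is in your $d=5$ case. Engulfing does give $M\setminus\{p\}\cong_{\mathrm{PL}}\R^{5}$, and one-point compactification then gives a \emph{topological} homeomorphism $M\cong S^{5}$; but the PL conclusion does not follow formally from compactification. One must still show that $C=M\setminus \operatorname{int}\St(p)$, a contractible compact PL $5$-manifold bounded by the PL $4$-sphere $\Lk(p,M)$, is a PL ball --- and the $5$-dimensional $h$-cobordism theorem is not available for this, while ``every contractible PL $5$-manifold with sphere boundary is a PL ball'' is essentially the statement being proved. This is exactly the delicate point of Zeeman's contribution. Two standard ways to close the gap: (a) use that every homotopy $5$-sphere bounds a contractible $6$-manifold $W$, remove a PL $6$-ball from $W$, and apply the $6$-dimensional $h$-cobordism theorem to the resulting cobordism between $S^5$ and $\partial W=M$; or (b) use tools already in the paper, namely Theorem \ref{thm:UniqueSmoothStructure} to equip the PL $5$-manifold $M$ with a compatible smooth structure, the Kervaire--Milnor computation \cite{MilnorKervaire} that every smooth homotopy $5$-sphere is diffeomorphic to $S^5$, and the uniqueness of Whitehead triangulations (Theorem \ref{thm:pltr}) to convert that diffeomorphism into a PL homeomorphism with $\partial\Delta^{6}$.
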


\begin{cor}
Every PL contractible manifold of dimension $d\neq 4$ and with simply connected boundary is a \emph{PL ball}.
\end{cor}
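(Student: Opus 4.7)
The plan is to combine the homological constraints arising from contractibility with the PL Poincar\'e conjecture (Theorem~\ref{thm:poincare}). Roughly, one first shows that $\partial M$ is a PL homology $(d-1)$-sphere, upgrades this to a PL $(d-1)$-sphere via Theorem~\ref{thm:poincare}, and finally deduces that $M$ itself is a PL $d$-ball.

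First I would compute the homology of $\partial M$. Since $M$ is contractible, $H^{k}(M)=0$ for $k>0$, and Lefschetz duality yields $H_{k}(M,\partial M)\cong H^{d-k}(M)$, so the relative homology is concentrated in degree $d$, where it equals $\mathbb{Z}$. Plugging this into the long exact sequence of the pair $(M,\partial M)$ forces $\tilde H_{k}(\partial M)=0$ for $k<d-1$ and $H_{d-1}(\partial M)\cong \mathbb{Z}$; that is, $\partial M$ is a PL homology $(d-1)$-sphere. Being simply connected by hypothesis, Theorem~\ref{thm:poincare} (applicable whenever $d-1\ne 4$) then shows that $\partial M$ is PL-homeomorphic to $\partial \Delta^{d}$.

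Next, I would deduce that $M$ is a PL ball by a doubling argument. A Mayer--Vietoris computation together with van Kampen's theorem gives that the double $DM := M\cup_{\partial M}M$ is a closed simply connected PL $d$-manifold with the homology of $S^{d}$; Theorem~\ref{thm:poincare}, applicable since $d\ne 4$, then yields that $DM$ is PL-homeomorphic to $S^{d}$. Now $\partial M$ is a PL $(d-1)$-sphere embedded in $DM\cong S^{d}$ and $M$ is one of its two complementary closures; by PL Schoenflies (a classical consequence of the PL Poincar\'e conjecture and PL $h$-cobordism, valid for $d\ne 4$) both complementary closures are PL $d$-balls, which concludes the argument.

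The hardest point is the case $d=5$: the step showing $\partial M$ is a PL sphere would appeal to Theorem~\ref{thm:poincare} in dimension four, which is precisely the one excluded. To cover this dimension one must argue more delicately, exploiting that $DM$ is still a PL $5$-sphere (Theorem~\ref{thm:poincare} is available in dimension $5$) and that $M$ sits in $S^{5}$ as a contractible codimension-$0$ PL submanifold, concluding through PL regular-neighbourhood theory rather than through an appeal to the four-dimensional PL Poincar\'e conjecture.
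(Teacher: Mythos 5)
Your argument for $d\neq 4,5$ is correct and is essentially the intended derivation (the paper states this corollary without proof, as an immediate consequence of Theorem \ref{thm:poincare}): Lefschetz duality and the long exact sequence make $\partial M$ a simply connected PL homology $(d-1)$-sphere, Theorem \ref{thm:poincare} upgrades it to a PL sphere, and then $M$ is a PL ball. For that last step you can avoid invoking ``PL Schoenflies'' altogether (its unrestricted form is itself an open conjecture; only the bicollared version, which does apply to $\partial M\subset DM$, is a theorem): once $\partial M$ is known to be a PL $(d-1)$-sphere, glue a cone $v\ast\partial M$ onto $M$ to obtain a simply connected PL homology $d$-sphere, hence a PL $S^{d}$ by Theorem \ref{thm:poincare} again, and conclude that $M$ is the closed complement of a PL ball in a PL sphere, which is a PL ball by Newman's theorem. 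This is shorter than doubling.

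The genuine gap is at $d=5$, and your proposed repair does not close it. Regular neighbourhood theory exhibits $M$ as a regular neighbourhood of a spine $K$ with $M\searrow K$, but Whitehead's theorem (Theorem \ref{thm:WTHM}) yields a ball only when $K$ is \emph{collapsible}; a merely contractible spine is not enough (the Mazur manifold is a regular neighbourhood of a contractible $2$-complex and is not a ball). In fact no argument assembled from the results quoted in this section can work: every PL homotopy $4$-sphere $\Sigma$ bounds a compact contractible PL $5$-manifold $W$ (Kervaire-style surgery: $\Sigma$ is framed null-cobordant since $\pi_{4}^{s}=0$, and a framed $5$-manifold bounded by $\Sigma$ can be surgered to be $2$-connected, hence, by Poincar\'e--Lefschetz duality, contractible). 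If the corollary held for $d=5$, then $W$ would be a PL $5$-ball and $\Sigma=\partial W$ would be PL standard; that is, the $d=5$ case implies a positive answer to Problem \ref{conj:SPC}. So one must either restrict to $d\neq 4,5$, or add for $d=5$ the hypothesis that $\partial M$ is a PL $4$-sphere (in which case the coning/Newman argument above goes through, since Theorem \ref{thm:poincare} is available in dimension $5$). Note that this issue is present in the statement as printed in the paper, not only in your write-up; your instinct that $d=5$ is the hard case was right, but the case is not merely delicate --- it is open.
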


In dimension $d=4$, the situation is quite unclear: See Section \ref{sec:4m}.

\subsection{All smooth manifolds can be PL triangulated} \label{sec:Smooth2PL}
To show that all smooth manifolds can be triangulated there are several ways; the argument we present here is due to Cairns \cite{Cairns}.
Without loss of generality, we may focus on smooth submanifolds of Euclidean space: In fact, by Whitney's embedding theorem, every smooth $d$-manifold 
is diffeomorphic to some submanifold of $\R^{2d}$.
If $B({\bf x},\varepsilon)$ denotes the open ball of radius $\varepsilon$ and center ${\bf x}$ in $\R^{2d}$, then obviously 
\[ \bigcup_{x \in M} B({\bf x}, \varepsilon) \; \supset \; M.\]
Since $M$ is compact, finitely many of these balls actually suffice to cover $M$. Any finite set of points ${\bf x}_1, \ldots, {\bf x}_s$ such that the union of the balls $B({\bf x}_i, \varepsilon)$ contains $M$ is called an \emph{$\varepsilon$-net of~$M$}.

\begin{theorem}[Cairns \cite{Cairns}]\label{thm:pltr}
Every smooth manifold can be PL triangulated.
\end{theorem}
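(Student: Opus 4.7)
The plan is to follow Cairns' original construction: embed $M$ linearly into Euclidean space, build a piecewise-linear decomposition by intersecting $M$ with tangent hyperplanes at points of a sufficiently fine $\varepsilon$-net, and then triangulate the resulting cell complex. Concretely, first I would invoke Whitney's theorem to view $M$ as a smooth $d$-submanifold of $\R^{2d}$. By the tubular neighborhood theorem, there is a smooth open neighborhood $U\supset M$ and a smooth retraction $\pi: U\to M$ such that $\pi^{-1}(x)$ is an open disk in the affine normal space $N_xM$ for every $x\in M$.

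Next, for a parameter $\varepsilon>0$ to be fixed later, I would pick an $\varepsilon$-net $\{{\bf x}_1,\dots,{\bf x}_s\}\subset M$ as defined just before the theorem. Attached to each ${\bf x}_i$ is the $d$-dimensional affine tangent plane $T_i:=T_{{\bf x}_i}M\subset\R^{2d}$, and by compactness I can assume that on each ball $B({\bf x}_i,\delta)$ (for some uniform $\delta$ depending on the curvature of $M$) the manifold $M$ is a smooth graph over $T_i$, with derivative uniformly close to the identity. Inside each tangent plane $T_i$, I form the Voronoi-type convex polytope
\[
P_i \;:=\; \{\,{\bf y}\in T_i\;:\; \|{\bf y}-{\bf x}_i\|\le \|{\bf y}-{\bf x}_j\|\text{ for all }j\ne i\,\}\;\cap\; \overline{B({\bf x}_i,\tfrac12\varepsilon)},
\]
which is a bounded convex $d$-polytope. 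The family $\{P_i\}$ forms a \emph{polyhedral cell complex} $\mathcal P$ in $\R^{2d}$: whenever two cells meet, they meet along a common face, because the Voronoi construction is globally consistent and the clipping ball is large enough to contain the local Voronoi pieces when $\varepsilon$ is small.

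Then I would transport this cell complex onto $M$ using the normal retraction $\pi$. Provided $\varepsilon$ is chosen smaller than the reach of $M$ and small compared to the curvature radius, each $P_i$ lies inside the tubular neighborhood $U$, the restriction $\pi\vert_{P_i}$ is a smooth embedding of $P_i$ onto a closed $d$-cell $\pi(P_i)\subset M$, and the cells $\pi(P_i)$ cover $M$ and agree along faces because the defining inequalities $\|{\bf y}-{\bf x}_i\|\le\|{\bf y}-{\bf x}_j\|$ survive the projection in a consistent way. To get a simplicial rather than merely polyhedral complex, I subdivide each $P_i$ barycentrically in $T_i$ and transport the subdivision to $M$. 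The star of each vertex in the resulting complex is, by construction, isomorphic (after one more derived subdivision) to a star in a subdivided $d$-simplex inside some tangent plane, which certifies that the triangulation is PL.

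The main obstacle is the geometric matching step: one must verify that for $\varepsilon$ sufficiently small \emph{and uniform over $M$}, the local Voronoi cells constructed in the distinct tangent planes $T_i$ and $T_j$ of adjacent net points fit together along a common $(d{-}1)$-face after normal projection. This requires curvature-based estimates showing that the tangent planes at nearby net points are nearly parallel, so that the perpendicular bisector hyperplane between ${\bf x}_i$ and ${\bf x}_j$ cuts $T_i$ and $T_j$ in essentially the same codimension-one flat, and that the retraction $\pi$ identifies these cuts. Once these quantitative estimates are in place, the rest of the argument is straightforward combinatorics on the resulting cell structure, and one recovers the classical conclusion of Cairns.
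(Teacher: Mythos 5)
Your overall strategy (Whitney embedding into $\R^{2d}$, an $\varepsilon$-net, a Voronoi-type decomposition, barycentric subdivision at the end) is the same as the paper's, but you relocate the Voronoi construction from the ambient space into the individual tangent planes $T_i$, and this is exactly where the argument breaks. Two affine $d$-planes $T_i, T_j$ in $\R^{2d}$ in general position meet in a single point or not at all, so the sets $P_i\subset T_i$ and $P_j\subset T_j$ do not form a polyhedral complex in $\R^{2d}$ in any literal sense: they have no common $(d-1)$-face to share. After applying the normal retraction $\pi$, the situation does not improve. The candidate common wall between $\pi(P_i)$ and $\pi(P_j)$ would have to be both $\pi\bigl(\mathrm{bis}({\bf x}_i,{\bf x}_j)\cap T_i\cap P_i\bigr)$ and $\pi\bigl(\mathrm{bis}({\bf x}_i,{\bf x}_j)\cap T_j\cap P_j\bigr)$, where $\mathrm{bis}$ denotes the perpendicular bisector hyperplane; these are images of two \emph{distinct} $(d-1)$-flats lying in different tangent planes, and for a curved $M$ they are two nearby but genuinely different hypersurfaces in $M$. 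Hence the closed cells $\pi(P_i)$ and $\pi(P_j)$ overlap on an open set or leave a gap, and no ``nearly parallel tangent planes'' estimate can restore exact incidence. You correctly identify this matching step as the main obstacle, but deferring it to unspecified curvature estimates leaves the proof with a hole at its central point; as set up, the step is not merely unproven but false. (A secondary slip: clipping by the round ball $\overline{B({\bf x}_i,\varepsilon/2)}$ means $P_i$ is not a polytope, and with radius $\varepsilon/2$ against an $\varepsilon$-net the clipped cells need not cover $M$.)

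The paper's (Cairns') construction avoids the matching problem entirely by taking the Voronoi diagram of the net points in the ambient $\R^{2d}$ and setting $C_i:=Z_i\cap M$. Then the interface $C_i\cap C_j=Z_i\cap Z_j\cap M$ is by construction a single, shared set, so incidence is automatic; the only thing left to verify (which is the actual content of Cairns' theorem) is that for $\varepsilon$ small enough each $C_i$ and each intersection $C_i\cap C_j$ is a topological cell, so that the $C_i$ form a regular CW complex whose barycentric subdivision is the desired simplicial complex. If you want to salvage your tangent-plane picture, the right move is to use it only locally, as a chart in which to \emph{verify} that the ambient Voronoi cells cut $M$ in cells, not as the source of the decomposition itself.
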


\begin{proof}[Sketch of proof]
Let $\{{\bf x}_1, \ldots, {\bf x}_s\}$ be an $\varepsilon$-net of $M$. Consider now the Voronoi diagram $\mathfrak{V} \in \R^{2d}$ associated to these $s$ points. For each $i$, let us denote the cell containing ${\bf x}_i$ by $Z_i$. Consider the sets $C_i:=Z_i\cap M$. Since the $Z_i$ partition $\R^{2d}$, we have
$C_1 \cup \ldots \cup C_s = M$. Cairns showed that if $\varepsilon$  is small enough the $C_i$'s are cells, and the intersections $C_i\cap C_j$ are also cells. Hence, the $C_i$ form a regular cell complex. In general, a regular cell complex need not be a simplicial complex, but its barycentric subdivision is.
\end{proof}

The triangulation constructed in Theorem \ref{thm:pltr} is called a {\em Whitehead triangulation} of $M$. 
Up to PL-equivalence, it is the unique PL structure that satisfies  the so-called {\em Whitehead compatibility condition}, cf.~\cite{WhiteheadC1, KirSie} for details. In particular, any two PL triangulations obtained via Theorem \ref{thm:pltr} (with different choices of $\varepsilon$-nets) are PL-homeomorphic. In other words, up to PL equivalence it makes sense to speak of {\em the} Whitehead triangulation of $M$.

\subsection{Not all PL triangulations come from smooth structures} \label{sec:PLtoSmooth}

A smooth structure on a topological manifold $X$ is an equivalence class of smooth manifolds $M$ homeomorphic to $X$ that are related by orientation-preserving diffeomorphisms. We saw that from a smooth structure one can always pass to a PL structure. How about the converse? Can we pass from a PL structure to a smooth structure? The answer, in general, is negative.

Intuitively, the problem is this: Two triangulated $d$-manifolds $A$ and $B$ (with non-empty boundary) can always be glued together. For this  we just have to specify two combinatorially equivalent $(d-1)$-submanifolds $S_A \subset \partial A$ and $S_B \subset \partial B$, and then identify $S_A \equiv S_B$. If $A$ and $B$ are PL triangulations, their union will also be PL. In contrast, if we take two smooth $d$-manifolds $A$ and $B$, and we identify diffeomorphic $(d-1)$-submanifolds $S_A$ and $S_B$ of their boundaries, we might not obtain a smooth structure on the union. In fact, we should explain how to smoothly extend the attaching diffeomorphism (of $S_A$ and $S_B$) into (part of) the interiors of $A$ and of $B$.  
This is not always possible, as demonstrated by the following non-trivial counterexample, due to Kervaire \cite{Kervaire}.

\begin{example}[PL manifolds with no smooth structure]\label{ex:Kervaire}
Kervaire's $10$-dimensional topological manifold admits a PL triangulation. However, this triangulation is not homeomorphic to, and not even homotopy equivalent to, any smooth manifold \cite{Kervaire}. 
Munkres and Hirsch developed an obstruction theory to the existence of a smooth structures compatible with a given PL manifold \cite{Munkressmooth, HundM}. 
\end{example}

But supposing a smooth structure exists, can one recover it from a given PL structure? The answer in this case is also negative: Even if a compatible smooth structure exists, it need not be unique.

\begin{example}[PL spheres with many compatible smooth structures]\label{ex:manyforone}
Milnor constructed a smooth $7$-manifold homeomorphic to, but not diffeomorphic to, the unit sphere $S^7$ in $\R^8$ \cite{Milnorexotic}. The usual terminology is to speak of {\em exotic smooth structures} on the $7$-sphere. A single exotic structure produces many other exotic structures, just by taking connected sums. (It is a deep result in topology that smooth structures on $S^d$ form a finite Abelian group with respect to the connected sum \cite{MilnorKervaire}, except possibly for $d=4$.)
By Theorem \ref{thm:poincare}, all spheres of dimension different than $4$ admit a unique PL structure. In other words, if we apply the construction of Theorem \ref{thm:pltr} to all of Milnor's exotic $7$-spheres, we get PL triangulations that are PL-homeomorphic (and can be connected to each other via a finite sequence of bistellar flips), even if they come from different smooth structures.
\end{example}

However, all these problems occur in higher dimensions. If the dimension of the manifold is sufficiently small, any PL triangulation induces a unique smooth structure.

\begin{theorem}[{Kervaire--Milnor\cite{MilnorKervaire}, Hirsch--Mazur \cite{HundM}, Munkres \cite{Munkressmooth}}] \label{thm:UniqueSmoothStructure}
If $d \le 7$, every PL $d$-manifold has at least one compatible smooth structure; and if $d \le 6$, the compatible structure is unique up to diffeomorphism. 
\end{theorem}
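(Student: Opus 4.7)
\begin{proofp}
This is a classical result, so my plan is to survey the smoothing-theory proof, organized so that the dimension bounds $d\le 7$ and $d\le 6$ appear as consequences of a single obstruction-theoretic computation. The overall strategy, following the Munkres--Hirsch--Mazur approach, is to reduce the problem to computing certain homotopy groups of the space $\mathrm{PL}/O$, which is the homotopy fiber of the natural forgetful map $BO\to B\mathrm{PL}$ of classifying spaces for vector bundles and PL microbundles, respectively.

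The first step is to set up the equivalence between smoothings of a PL manifold $M$ and vector bundle reductions of its PL tangent microbundle $\tau_M^{\mathrm{PL}}$. By the Cairns--Hirsch theorem, concordance classes of smoothings of $M$ are in natural bijection with homotopy classes of lifts
\begin{equation*}
\begin{array}{ccc} & & BO \\ & \nearrow & \downarrow \\ M & \stackrel{\tau_M^{\mathrm{PL}}}{\longrightarrow} & B\mathrm{PL}. \end{array}
\end{equation*}
Obstruction theory for such lifts produces a sequence of classes: the primary obstructions to \emph{existence} of a lift lie in $H^{i+1}(M;\pi_i(\mathrm{PL}/O))$, and, once a lift exists, the obstructions to \emph{uniqueness} (i.e.\ to any two lifts being homotopic) lie in $H^{i}(M;\pi_i(\mathrm{PL}/O))$. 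This is the general machine; the remaining work is to input low-dimensional information about $\mathrm{PL}/O$.

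The second step is the Kervaire--Milnor computation. Combining the identification of $\pi_i(\mathrm{PL}/O)$ with the group $\Theta_i$ of homotopy $i$-spheres up to $h$-cobordism (valid in the range where smoothing theory applies), and the explicit calculation in \cite{MilnorKervaire}, one obtains
\begin{equation*}
\pi_i(\mathrm{PL}/O)=0\quad\text{for all } i\le 6,\qquad \pi_7(\mathrm{PL}/O)\cong \mathbb{Z}/28.
\end{equation*}
Now feed this into the obstruction sequences. For existence on a PL $d$-manifold with $d\le 7$, the potentially nonvanishing groups $H^{i+1}(M;\pi_i(\mathrm{PL}/O))$ can only appear for $i\ge 7$, hence in cohomological degree $\ge 8$, which is zero by dimension. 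Thus every PL $d$-manifold with $d\le 7$ admits a compatible smooth structure. For uniqueness on a PL $d$-manifold with $d\le 6$, the relevant groups $H^{i}(M;\pi_i(\mathrm{PL}/O))$ can only appear for $i\ge 7$, again vanishing by dimension; hence any two smoothings are concordant, and by the Cerf--Hirsch concordance-implies-isotopy theorem they are diffeomorphic through an orientation-preserving diffeomorphism.

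The main obstacle, which I would cite rather than redo, is the identification of $\pi_i(\mathrm{PL}/O)$ with $\Theta_i$ and the verification that $\Theta_i=0$ for $i\in\{1,2,3,5,6\}$: this rests on the full strength of \cite{MilnorKervaire} (in particular the computation of stable homotopy groups of spheres in the relevant range and the surgery-theoretic analysis of the Kervaire invariant). Once that input is granted, the proof is essentially bookkeeping with obstruction classes; the dimension bounds $d\le 7$ and $d\le 6$ then reflect exactly the first nontrivial homotopy group $\pi_7(\mathrm{PL}/O)=\mathbb{Z}/28$, which is also what produces the $28$ inequivalent smooth structures on $S^7$ mentioned in Example \ref{ex:manyforone}.
\end{proofp}
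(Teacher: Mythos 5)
The paper does not prove this statement: it is quoted as a classical black box, with Kervaire--Milnor, Hirsch--Mazur and Munkres cited in lieu of a proof, so there is no in-paper argument to compare against. Your outline is the standard smoothing-theory proof and is essentially correct: reduce to lifting the PL tangent microbundle along $BO\to B\mathrm{PL}$, place the existence obstructions in $H^{i+1}(M;\pi_i(\mathrm{PL}/O))$ and the uniqueness obstructions in $H^{i}(M;\pi_i(\mathrm{PL}/O))$, use that $\mathrm{PL}/O$ is $6$-connected with $\pi_7(\mathrm{PL}/O)\cong\mathbb{Z}/28$, and finish with concordance-implies-isotopy to upgrade concordance of smoothings to diffeomorphism. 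The dimension counts ($i\ge 7$ forces cohomological degree $\ge 8$ for existence, degree $\ge 7$ for uniqueness) are exactly right.

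The one place where your sketch is too quick is the assertion that the Kervaire--Milnor computation of $\Theta_i$ yields $\pi_i(\mathrm{PL}/O)=0$ for \emph{all} $i\le 6$. The identification $\pi_i(\mathrm{PL}/O)\cong\Theta_i$ is only valid for $i\ge 5$, where one has Smale's $h$-cobordism theorem; for $i\le 4$ the group $\pi_i(\mathrm{PL}/O)$ is instead the group $\Gamma_i$ of concordance classes of smoothings of the standard PL $i$-sphere, and its vanishing rests on the low-dimensional smoothing results of Moise and Munkres for $i\le 3$ and on Cerf's theorem $\Gamma_4=0$ \cite{Cerf} --- none of which is contained in \cite{MilnorKervaire}. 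You gesture at this with ``valid in the range where smoothing theory applies,'' but as written the vanishing for $i=3,4$ is attributed to the wrong source. This is a citation issue rather than a mathematical gap: all the inputs are true, and once they are granted, your bookkeeping with the obstruction classes and the resulting bounds $d\le 7$ for existence and $d\le 6$ for uniqueness are correct.
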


We stress that the result above does \emph{not} say that $6$-manifolds have a unique smooth structure. It says  that the number of (non-diffeomorphic) smooth structures, for $d\le 6$,  is the same as the number of (non-PL-homeomorphic) PL triangulations. In dimension $d \in \{1,2,3\}$, this number is one, by the work of Papakyriakopoulos and Moise \cite{Moise}. However, if $d \in \{4,5,6\}$, several PL structures may appear. For example, Hsiang--Shaneson proved that the $5$-torus admits many smooth structures \cite{HsiangShaneson}.

\enlargethispage{3mm}

\subsection{Lack of (PL) triangulations}
In Section \ref{sec:Smooth2PL}, we reviewed Cairns' result that every smooth manifold admits a PL triangulation. This raises two natural questions: (1) Does every topological manifold admit a triangulation? (2) Is every triangulation PL?  

The answer to both questions is negative, as we will see. Let us start with a positive result:

\begin{theorem}[{Moise ($d \le 3$), Perelman ($d=4$)}] \label{thm:Perelman}
For $d \le 4$ all triangulations of any $d$-manifold are PL.
\end{theorem}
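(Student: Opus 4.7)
The plan is to proceed by induction on $d$, strengthening the claim to the following: for every triangulation $T$ of a topological $d$-manifold with $d \le 4$, and every simplex $\sigma \in T$, the link of $\sigma$ in $T$ is a PL sphere of dimension $d - \dim \sigma - 1$.

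For $d \le 2$ and for codimension-two links in $d = 3$, the argument is essentially elementary. A degree count using the topological manifold condition---every $(d-1)$-simplex is contained in exactly two $d$-simplices, since otherwise the local neighborhood of a point in the interior of that simplex fails to be Euclidean---forces the relevant $1$-dimensional links to be cycles, hence PL circles. For vertex links in $d = 3$, this combines with Rad\'o's classification of $2$-manifolds: the link $L$ of a vertex is a $2$-complex whose vertex-links are circles, so $|L|$ is a topological $2$-manifold; local homology makes it a homology $2$-sphere; hence $|L| \cong S^2$, and any triangulation of $S^2$ is PL.

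For $d = 4$, the cases of codimension $\ge 2$ reduce to the preceding arguments. The main case is $\sigma = v$ a vertex, with $L := \Lk v$ a $3$-dimensional complex. By the analysis just performed (applied one codimension lower), every vertex-link within $L$---that is, the link in the $4$-manifold of an edge $\{v,w\}$---is a topological $2$-sphere, so $|L|$ is a topological $3$-manifold. Comparing the closed star of $v$ to a small Euclidean chart around $v$, one shows $|L|$ has the homotopy type of $S^3$, and by Perelman's topological Poincar\'e conjecture (Theorem~\ref{thm:poincaretop}), $|L| \cong S^3$. The $d=3$ case of the theorem (inductive hypothesis) then implies that $L$ is PL as a triangulation of the topological $3$-manifold $|L|$. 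Finally, since $L$ is a PL $3$-manifold with $|L| \cong S^3$, Theorem~\ref{thm:poincare} (PL Poincar\'e) promotes it to a genuine PL $3$-sphere.

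The main obstacle is the $d = 4$ step: showing that the combinatorial link $|L|$ is actually homeomorphic to $S^3$---rather than merely a homology or homotopy $3$-sphere---is exactly where Perelman's topological Poincar\'e conjecture becomes essential, since without it $|L|$ could a priori be a Poincar\'e-type fake sphere. The remaining ingredients (the degree counts, Rad\'o's classification, Moise, and PL Poincar\'e) are classical by comparison.
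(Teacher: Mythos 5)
The paper gives no proof of this statement: it is quoted as a known result, with the $d\le 3$ case attributed to Moise and the $d=4$ case to Perelman. Your sketch is precisely the standard folklore argument behind that citation, and it is essentially correct: degree counts and the classification of surfaces handle all links of codimension $\ge 2$ and the $d\le 3$ cases; for a vertex $v$ of a triangulated $4$-manifold the link $L$ is a closed topological $3$-manifold and a homology $3$-sphere, Perelman's Theorem~\ref{thm:poincaretop} upgrades it to $S^3$, and Moise plus Theorem~\ref{thm:poincare} make it a genuine PL $3$-sphere, so $\St(v)$ is a PL $4$-ball as the paper's definition requires. Two points deserve more care than your phrase ``comparing the closed star of $v$ to a small Euclidean chart'' suggests. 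First, local homology alone only makes $L$ a homology $3$-sphere; to apply Perelman you must show $\pi_1(L)=1$, which needs the deleted-neighborhood argument: the punctured open stars $\mathring{\St}(v,\sd^k M)\setminus\{v\}$ form a neighborhood basis all deformation-retracting onto $|L|$ compatibly, and interleaving this basis with punctured Euclidean balls (whose deleted versions are simply connected since $\R^4\setminus\{0\}\simeq S^3$) kills every loop in $L$. Second, the paper's manifolds are allowed to have boundary, so for boundary vertices you must also show the link is a PL $3$-ball (via Corollary~\ref{cor:pwb} and Moise) rather than a PL sphere; this is routine but is a separate case your induction should carry along.
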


In particular, a $4$-dimensional topological manifold admits a triangulation if and only if it admits a PL structure; by Theorem \ref{thm:UniqueSmoothStructure}, this is equivalent to the existence of a smooth structure.

\begin{example}[Freedman \cite{Freedman}] 
Using a theorem by Rokhlin \cite{Rokhlin}, Freedman constructed a $4$-dimen\-sional topological manifold, the so called \emph{$E_8$-manifold}, that does not admit any smooth structure. It follows from Theorem \ref{thm:Perelman} (or alternatively, from Casson's work, cf. \cite{MilnorFifty}) that $E_8$ does not admit a triangulation either.
\end{example}

Whether all topological manifolds of dimension $> 4$ admit a triangulation was a long-standing open problem, 
recently solved by Manolescu, after it had been reduced to a question  on $3$-dimensional homology spheres (compare Galewski--Stern \cite{GalewskiStern} and Ranicki \cite{RanickiHV}):

\begin{theorem}[Manolescu \cite{Manolescu}]
For each $d \ge 5$ there exist topological $d$-manifolds that cannot be triangulated.
\end{theorem}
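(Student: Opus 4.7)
The plan is to leverage the Galewski--Stern/Matumoto reduction, which translates triangulability of high-dimensional topological manifolds into a question purely about $3$-dimensional homology spheres. Let $\Theta^H_3$ denote the abelian group of oriented integral homology $3$-spheres modulo integral homology cobordism, and let $\mu \colon \Theta^H_3 \to \mathbb{Z}/2$ be the Rokhlin homomorphism (well-defined because any spin $4$-manifold bounding a homology sphere has signature divisible by $8$ only modulo $16$). This sits in the short exact sequence
\[
0 \longrightarrow \ker\mu \longrightarrow \Theta^H_3 \stackrel{\mu}{\longrightarrow} \mathbb{Z}/2 \longrightarrow 0.
\]
Galewski--Stern and Matumoto proved that all topological $d$-manifolds, $d\ge 5$, are triangulable \emph{if and only if} this sequence splits. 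The first step of my plan is to recall (and take as a black box) this reduction.

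Second, I would show how a failure of the splitting immediately yields concrete non-triangulable examples in every dimension $d \ge 5$. To each topological manifold $M$ one associates a Kirby--Siebenmann-type cohomology class; applying the Bockstein $\delta\colon H^4(M;\mathbb{Z}/2)\to H^5(M;\ker\mu)$ from the above exact sequence to the Kirby--Siebenmann class $\operatorname{ks}(M)$ produces an obstruction $\delta\operatorname{ks}(M)\in H^5(M;\ker\mu)$ that vanishes precisely when $M$ is triangulable. Assuming the sequence does not split, one takes any topological $5$-manifold with nontrivial $\operatorname{ks}$ (these are classically known to exist, e.g.\ via Freedman's $4$-dimensional constructions crossed with circles or via surgery), checks that $\delta$ is injective on its image, and concludes non-triangulability. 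Higher-dimensional examples follow by taking products with tori and spheres, using naturality of $\operatorname{ks}$ and $\delta$.

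Third and crucially, I would prove that the Rokhlin sequence does not split. The strategy is to construct a homology-cobordism invariant $\beta\colon\Theta^H_3\to\mathbb{Z}$ with the following two properties:
\begin{compactenum}[(a)]
\item $\beta(Y) \equiv \mu(Y)\pmod 2$ for every homology $3$-sphere $Y$;
\item $\beta(-Y) = -\beta(Y)$ under orientation reversal.
\end{compactenum}
Granting such a $\beta$, suppose a section $s\colon\mathbb{Z}/2\to\Theta^H_3$ of $\mu$ existed. Then $Y := s(1)$ satisfies $\mu(Y)=1$ and $2[Y]=0$, so $Y$ is homology-cobordant to $-Y$; hence $\beta(Y) = \beta(-Y) = -\beta(Y)$, giving $\beta(Y)=0$, which contradicts $\mu(Y)=1$ via (a). So the sequence cannot split, and by the previous step Manolescu's theorem follows.

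The main obstacle, by far, is the construction of $\beta$. Manolescu's breakthrough is to build a stable $\mathrm{Pin}(2)$-equivariant Seiberg--Witten Floer homotopy type for rational homology $3$-spheres (refining the $S^1$-equivariant Conley-index construction), and to extract from its $\mathrm{Pin}(2)$-equivariant cohomology three numerical invariants $\alpha,\beta,\gamma$. One must then verify: (i) $\mathrm{Pin}(2)$-equivariance (not just $S^1$-equivariance) is what forces the required mod-$2$ identity with Rokhlin, via the Bauer--Furuta map and the relation between the $j$-action on the spinor bundle and conjugation of the spin$^c$ structure; (ii) invariance under spin homology cobordism, by a parameterized-moduli argument; and (iii) the orientation-reversal symmetry, via Spanier--Whitehead duality of the Floer spectrum. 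This third step — the genuine gauge-theoretic content — is entirely outside the combinatorial and PL techniques used elsewhere in this paper, and I would quote it from \cite{Manolescu} rather than attempt to reproduce it.
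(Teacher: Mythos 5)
The paper offers no proof of this theorem: it is quoted verbatim from Manolescu, with only a remark that the problem had been reduced to a question about $3$-dimensional homology spheres via Galewski--Stern and Ranicki, so there is no internal argument to compare yours against. Your outline faithfully reconstructs the cited proof --- the Galewski--Stern/Matumoto equivalence between universal triangulability in dimensions $\ge 5$ and the splitting of the Rokhlin sequence $0 \to \ker\mu \to \Theta^H_3 \to \mathbb{Z}/2 \to 0$, the Bockstein obstruction $\delta\operatorname{ks}(M) \in H^5(M;\ker\mu)$, and Manolescu's $\mathrm{Pin}(2)$-equivariant invariant $\beta$ whose congruence $\beta \equiv \mu \pmod 2$ and antisymmetry $\beta(-Y)=-\beta(Y)$ together forbid a $2$-torsion lift of the nontrivial element --- so it is correct and is precisely the argument the paper delegates to \cite{Manolescu}.
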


In addition, Theorem \ref{thm:Perelman} does not extend to higher dimensions: In the Seventies Edwards and Cannon discovered that some triangulations are not PL, as we will now explain. 

Recall that a \emph{homology sphere} is a manifold with the same homology as a sphere. Homology spheres need not be simply-connected. A \emph{homology-manifold} is a pure simplicial complex in which, for every face $\tau$, the $(\dim C - \dim \tau-1)$-complex $\Lk(\tau,C)$ has the same homology of a $(\dim C - \dim \tau-1)$-sphere. Every triangulation of a manifold is a homology manifold. The converse is false: Some homology manifolds are not manifolds. This is a consequence of the following characterization.

\begin{theorem}[Cannon \cite{Cannon}, Edwards \cite{Edwardsicm, EdwardsPreprint}] \label{thm:edwards}
Let $C$ be any homology-manifold. The following are equivalent:
\begin{compactenum}[\rm (i)] 
\item $C$ is a manifold; 
\item for every vertex $v$ of $C$, $\Lk(v,C)$ is simply connected.
\end{compactenum}
\end{theorem}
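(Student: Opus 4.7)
\begin{proofp}
The plan is to handle the two directions separately.

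Direction (i)$\Rightarrow$(ii) is the easy one. For any simplicial complex $C$ and any point $p$ in the relative interior of a $k$-simplex $\sigma$, the open star of $\sigma$ is an open neighborhood of $p$ in $|C|$ homeomorphic to $\R^k\times c|L|$, where $L=\Lk(\sigma,C)$ and $c$ denotes the open cone. Specializing to a vertex $v$ with $|C|$ a manifold, the punctured cone $c|L|\setminus\{v\}$ is an open punctured neighborhood of $v$; it deformation-retracts onto $|L|$ via the cone parameter, and simultaneously onto a small sphere $S^{d-1}$ via the Euclidean chart at $v$. Comparing the two retracts yields $|L|\simeq S^{d-1}$, so $|L|$ is simply connected whenever $d\ge 3$.

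Direction (ii)$\Rightarrow$(i) is the substantive one. The plan is to invoke the recognition theorem of Cannon: a generalized $d$-manifold (i.e., a homology $d$-manifold which, being a polyhedron, is automatically an ANR) is a topological $d$-manifold if and only if it satisfies the \emph{disjoint disks property} (DDP), meaning that any two maps $D^2\to|C|$ admit arbitrarily close approximations with disjoint images. In dimensions $d\le 4$ the conclusion is automatic (homology manifolds of dimension $\le 3$ are manifolds by classical results, and dimension $4$ is covered by Theorem~\ref{thm:Perelman}), so the genuine content lies in $d\ge 5$, the regime where Cannon's theorem applies.

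A useful intermediate observation is that the ``simply connected vertex links'' hypothesis automatically propagates to all points of $|C|$. Indeed, for a point $p$ in the relative interior of a $k$-simplex $\sigma$, the simplicial link of $p$ inside $|C|$ is homeomorphic to $\partial\sigma\ast|L|=\Sigma^k|L|$, using the standard join identification $S^{k-1}\ast X=\Sigma^k X$. For $k\ge 1$ this is a suspension, hence simply connected automatically; and for $k=0$ it is the vertex link, simply connected by hypothesis. Thus the link of \emph{every} point of $|C|$ is simply connected.

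Granting this universal local simple connectedness, the verification of DDP proceeds by a general-position argument: two generic singular $2$-disks in $|C|$ intersect in a $1$-complex, and at each intersection point the simple connectedness of the local link provides the room to homotope one disk off the other through a small homotopy supported in a neighborhood of that intersection point; iterating carefully across all intersection points removes them all. Once DDP is verified, Cannon's recognition theorem closes the argument.

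The main obstacle I anticipate is the technical execution of the DDP verification, which is the geometric-topological heart of the Cannon--Edwards result. Carrying out the local ``piping'' of $2$-disks near simplices of every codimension, and ensuring that the successive perturbations do not reintroduce intersections elsewhere, is where the original work by Cannon and Edwards invests most of its effort; the simple connectedness of the pointwise link is precisely the local input that makes those piping homotopies available.
\end{proofp}
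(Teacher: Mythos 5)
The paper does not actually prove Theorem \ref{thm:edwards}: it is imported verbatim from Cannon and Edwards with citations, so there is no internal argument to compare yours against. Judged on its own terms, your proposal correctly identifies the architecture of the known proof --- the easy direction via the local homotopy type of a punctured vertex star, the propagation of simple connectivity to all point-links via the join/suspension identity $\Lk(p)\cong\partial\sigma\ast\Lk(\sigma,C)$, and the reduction of the hard direction to a manifold recognition principle in dimensions $\ge 5$ --- but it does not contain a proof of the substantive implication. The ``general-position piping'' by which two singular $2$-disks are made disjoint using simple connectivity of local links is not a routine verification one can defer: it \emph{is} the theorem, occupying essentially all of Edwards' and Cannon's work, and no amount of care with the bookkeeping you describe turns the one-paragraph sketch into an argument. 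You acknowledge this, which is honest, but it means the proposal establishes only the easy direction.

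Two further points need repair even at the level of the outline. First, the recognition theorem you invoke is not ``generalized manifold $+$ DDP $\Rightarrow$ manifold''; the correct statement (Edwards' cell-like approximation theorem) requires the generalized manifold to be \emph{resolvable}, i.e.\ to admit a cell-like map from a genuine manifold. For polyhedral homology manifolds resolvability must be supplied separately (Edwards' original polyhedral argument in effect constructs the resolution directly and does not route through DDP), so your reduction is missing a hypothesis. Second, the case $d=4$ is not ``covered by Theorem \ref{thm:Perelman}'', which concerns PL structures on spaces already known to be manifolds. What you actually need there is that each vertex link is a simply connected homology $3$-sphere (using that $3$-dimensional homology manifolds are manifolds), hence a genuine $S^3$ by Perelman's solution of the Poincar\'e conjecture, so that vertex stars are cones over $S^3$ and therefore $4$-balls. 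Note that this star-is-a-ball argument is exactly what fails for $d\ge 5$ --- the link of a vertex of $\Sigma^2 H$ can be $\Sigma H$, which is not a manifold --- which is why the heavy machinery is unavoidable there. Finally, in the easy direction the clean statement is that $\pi_1(|\Lk(v,C)|)$ is a retract of $\pi_1(S^{d-1})=1$ (compare the cone neighborhood with a Euclidean chart nested inside it), rather than the assertion that ``comparing the two retracts yields $|L|\simeq S^{d-1}$'', which as written presupposes what it is meant to show.
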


A famous instance of this criterion is the so called Double Suspension theorem. Recall that the \emph{suspension} of a simplicial complex $C$ is the complex 
\[ \Sigma(C) \; = \; \{x,\,y\} \ast C,\]
where $x$ and $y$ are two new vertices. In other words, $\Sigma(C)$ is the join of $C$ and a $0$-sphere.

\begin{cor}[Double Suspension theorem] \label{cor:edwards}
Let $H$ be any $d$-dimensional homology sphere. The double suspension $\Sigma^2(H) = \Sigma (\Sigma (H))$ is a manifold, homeomorphic to $S^{d+2}$. Since four edge links in $\Sigma^2(H)$ are combinatorially equivalent to $H$, if $H$ is not a PL sphere, then the sphere $\Sigma^2(H)$ is not a PL manifold.
\end{cor}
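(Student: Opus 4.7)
The plan is to apply the Cannon--Edwards criterion (Theorem \ref{thm:edwards}) to deduce that $\Sigma^2(H)$ is a manifold, use the topological Poincar\'e conjecture (Theorem \ref{thm:poincaretop}) to identify it with $S^{d+2}$, and finally exhibit four distinguished edges whose links equal $H$ in order to see that the triangulation fails to be PL when $H$ does.

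First I would verify that $\Sigma^2(H)$ is a homology manifold. Writing $\Sigma^2(H) = \{c,d\}\ast\{a,b\}\ast H$, every simplex $\sigma$ splits uniquely as $\sigma_1 \cup \sigma_2$ with $\sigma_1 \subseteq \{a,b,c,d\}$ and $\sigma_2 \in H$, and
\[
\Lk(\sigma,\Sigma^2 H) \;=\; \Lk(\sigma_1, S^1)\ast\Lk(\sigma_2,H),
\]
where $S^1 := \{c,d\}\ast\{a,b\}$ is the combinatorial $4$-cycle (note that $\{a,b\}$ and $\{c,d\}$ are not faces). Via the reduced K\"unneth-type formula $\tilde H_n(X\ast Y) \cong \bigoplus_{i+j=n-1}\tilde H_i(X)\otimes\tilde H_j(Y)$ (plus a Tor summand that vanishes for spheres), the join of two homology spheres of dimensions $p$ and $q$ is a homology sphere of dimension $p+q+1$. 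Since $\Lk(\sigma_1,S^1)$ is always a sphere and $\Lk(\sigma_2,H)$ is a homology sphere by assumption, every face link of $\Sigma^2(H)$ has the homology of a sphere of the appropriate codimension.

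Next I would check that every vertex link of $\Sigma^2(H)$ is simply connected, so that Theorem \ref{thm:edwards} applies. Each of the four suspension vertices $a,b,c,d$ has link equal to $\Sigma(H)$, which, being a suspension of the connected complex $H$, is simply connected; and each vertex $v$ of $H$ has link $\Sigma^2(\Lk(v,H))$, which is likewise simply connected. Hence $\Sigma^2(H)$ is a topological manifold. Being a join $S^1\ast H$ of connected spaces, it is moreover simply connected (e.g.\ by van Kampen applied to the standard open cover by two cone neighbourhoods), and it has the homology of a sphere of dimension $d+2$. Theorem \ref{thm:poincaretop} then yields $|\Sigma^2(H)|\cong S^{d+2}$.

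For the PL assertion, observe that each of the four edges $\{a,c\},\{a,d\},\{b,c\},\{b,d\}$ is a facet of the $4$-cycle $S^1$, so no vertex of $\{a,b,c,d\}$ can be adjoined to it, and the link formula collapses to $\Lk(\{a,c\},\Sigma^2 H)=H$, and analogously for the other three edges. If $\Sigma^2(H)$ were a PL triangulation of $S^{d+2}$, then all edge links would be PL spheres, so $H$ itself would be a PL sphere --- contradicting our hypothesis. The main technical obstacle is the careful bookkeeping of the join/link formulas (in particular treating the facet and empty cases consistently); once that is in place, the corollary follows directly from the two general results recalled in the background section.
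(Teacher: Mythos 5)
Your proposal is correct and follows exactly the route the paper intends (the corollary is stated as an instance of Theorem \ref{thm:edwards}, with the Poincar\'e conjecture identifying the homeomorphism type and the four edges of the $4$-cycle $S^0 \ast S^0$ supplying the non-PL links, as the paper's subsequent remark on the PL singular set makes explicit). Your write-up merely supplies the standard join/link bookkeeping that the paper leaves implicit, and it does so correctly.
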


In a non-PL closed manifold, the subcomplex of faces whose link is not a PL sphere is called \emph{PL singular set}. Inside  $\Sigma^2(H) = S^0 \ast S^0 \ast H$, the PL singular set is $S^0 \ast S^0$, a $4$-cycle formed by the apices of the suspensions: In fact, the link of each edge in this cycle is $H$.

For each $d \ge 5$, the sphere $S^d$ admits both PL triangulations, like the boundary of the $(d+1)$-simplex, and non-PL ones, like the double suspension of some homology $(d-2)$-sphere (cf.~Corollary \ref{cor:edwards}).  One could wonder if there exist manifolds that admit only non-PL triangulations. This is indeed the case, as discovered by Kirby and Siebenmann:

\begin{example}[Kirby--Siebenmann \cite{KirSie}] \label{ex:KiSie}
For every $d\geq 5$, there exists a $d$-manifold that admits a triangulation, but does not admit any PL triangulation. In fact, any $d$-manifold $M$, with $d\geq 4$, can be PL triangulated if and only if the Kirby-Siebenmann invariant $\kappa(M)\in H^4(M,\mathbb{Z}/2\mathbb{Z})$ vanishes. This in particular implies that all manifolds with $H^4(M,\mathbb{Z}/2\mathbb{Z})=0$ can be PL triangulated. Thus, all contractible manifolds can be triangulated! In fact, Gleason even discovered that all contractible manifolds admit a smooth structure, cf.\ \cite{HundM}.
\end{example}

Clearly, by Theorem \ref{thm:pltr} the manifolds of Example \ref{ex:KiSie} do not admit any smooth structure.

\subsection{The case of 4-manifolds}  \label{sec:4m}

The $4$-dimensional case, as far as manifolds are concerned, is the most bizarre and complicated. On one hand, by the work of Perelman, all triangulations of all $4$-manifolds are PL (Theorem \ref{thm:Perelman}). Furthermore, every PL structure corresponds to a unique smooth structure (Theorem \ref{thm:UniqueSmoothStructure}). However, exotic structures on $4$-manifolds are far from being understood. One of the big remaining open problems in topology, is the question of whether exotic $4$-spheres exist.

\begin{problem}[Smooth Poincar\'e conjecture]  \label{conj:SPC}
Is there a manifold homeomorphic to $S^4$, but not diffeomorphic to it?
\end{problem}

Since smooth structures are in bijection with PL structures, one could rephrase the question above in the PL category as ``are there PL structures on $S^4$ other than the boundary of the $5$-simplex?''. By definition, PL spheres are precisely spheres that are PL-homeomorphic to the simplex boundary. In \cite{AB-SSZ} Adiprasito and the author characterized PL spheres as those that become shellable after sufficiently many barycentric subdivisions. Hence, a third way to rephrase Problem \ref{conj:SPC} is: ``Are there triangulations of $S^4$ that remain unshellable after any finite number of barycentric subdivisions?'' 

At the moment we have no invariant to distinguish smooth structures on $S^4$. (Very recently Rasmussen's splice
invariant has sparkled some optimism in this direction, cf. \cite{SPC}.)
Some evidence in favor of the smooth Poincar\'e conjecture is that all reasonable candidates for counterexamples have been proven standard: See Akbulut \cite{akb} and Gompf \cite{Gompf}. Some evidence against the conjecture, instead, is represented by the discovery that some $4$-manifolds have many exotic smooth structures. 
So, a priori there could be infinitely many exotic $4$-spheres; or some; or none. The only thing we know is that there can be at most countably many of them.

\subsection{Handle decompositions}
Handle decompositions can be viewed as a divide and conquer method: To understand an arbitrary $d$-manifold, we chop it into $d$-balls, which are glued together at $(d-1)$-submanifolds of their boundaries. These submanifolds have a prescribed topology, namely, they are required to be products of balls and spheres. 
 
By $\mathbb{B}^{d}$ we denote from now on  the unit ball in $\mathbb{R}^{d}$. 

\begin{definition}[Attaching a $0$-handle]
Let $M'$ be either the empty set, or a topological manifold (resp. a PL triangulation manifold, resp. a smooth manifold) of dimension $d$. 
Let $H^{(0)}$ be a topological manifold homeomorphic to $\mathbb{B}^d$ (resp. a PL triangulation of the $\mathbb{B}^d$, resp. a smooth manifold diffeomorphic to $\mathbb{B}^d$). We say that the disjoint union $M= M' \cup H^{(0)}$ is the result of \emph{attaching a $0$-handle to $M'$ in the topological} (resp. \emph{PL}, resp. \emph{smooth}) \emph{category}. 
\end{definition}

\begin{definition}[Attaching an $i$-handle] Let $1 \le i \le d$ be integers. Let $M'$ and $H^{(i)}$ be two $d$-dimensional topological manifolds (resp. PL manifolds, resp. smooth manifolds) with non-empty boundary. Suppose that:
\begin{compactitem}
\item $H^{(i)}$ is  homeomorpic (resp. PL homeomorphic, resp. diffeomorphic) to $\mathbb{B}^d$.
\item Some $(d-1)$-submanifold $A'$ of $\partial M'$ and some $(d-1)$-submanifold $A$ of $\partial H^{(i)}$ are both homeomorphic to $S^{i-1} \times \mathbb{B}^{d-i}$ (resp. combinatorially equivalent to each other and both homeomorphic to $S^{i-1} \times \mathbb{B}^{d-i}$, resp. diffeomorphic to $S^{i-1} \times \mathbb{B}^{d-i}$ via a diffeomorphism that can be extended to the interior of $M'$ and of $H^{(i)}$).
\end{compactitem}
Let $M$ be the $d$-manifold obtained by gluing $M'$ to $H^{(i)}$ via the identification $A' \equiv A$. We say that $M$ is the result of \emph{attaching an $i$-handle to $M'$ in the topological} (resp. \emph{PL}, resp. \emph{smooth}) \emph{category}. With slight abuse of notation, we will write $M = M' \cup H^{(i)}$ and say that $M$ is the union of $M$ and $H^{(i)}$.
\end{definition}

\begin{definition}[Handle decomposition]
Let $M$ be a topological $d$-manifold (resp. a PL $d$-manifold, resp. a smooth $d$-manifold) with possibly empty boundary. A \emph{topological} (resp.\ \emph{PL}, resp. \emph{smooth}) \emph{handle decomposition for $M$} is an expression of the form
\[
M \; = \;
H_0^{(0)} \, \cup \, H_1^{(i_1)} \cup \; \ldots \; \cup H_k^{(i_k)}
\]
where $H_0^{(0)}$ is a $0$-handle and every $H_h^{(i_h)}$ is an $i_h$-handle attached to the union of the previous ones, in the topological (resp. PL, resp. smooth) category. 
\end{definition}

For typographical reasons, we will often use the expression ``\emph{handles of index $i$}'' instead of ``$i$-handles''.

\begin{example}
For every $d$, the ball $\mathbb{B}^d$ admits a smooth handle decomposition into one handle of index $0$. (One can also come up with more complicated  decompositions: For example, one can obtain $\mathbb{B}^d$ also by attaching one $0$-handle, one $(d-1)$-handle, and one $d$-handle.) Conversely, any manifold with a smooth handle decomposition consisting of only one handle is, by definition, diffeomorphic to the unit ball in some Euclidean space.
\end{example}

\begin{example}
 The standard $d$-sphere $S^d$ admits a smooth handle decomposition into one handle of index $0$ and one handle of index $d$. It follows from the work by Cerf \cite{CerfPiso} on twisted diffeomorphisms that also all exotic $d$-spheres,  for $d\geq 7$, admit a smooth handle decomposition into one handle of index $0$ and one handle of index~$d$. 
\end{example}

\begin{example} 
If any exotic $4$-sphere exists, then it cannot have a smooth handle decomposition into one  $0$-handle and one $4$-handle. This is because all orientation-preserving diffeomorphisms $S^3\rightarrow S^3$ are isotopic to each other, by the work of Moise \cite{Moise}. (The smoothing of the attaching diffeomorphism is unique, by Theorem \ref{thm:UniqueSmoothStructure}.) It follows that if a closed $4$-manifold $M$ has a smooth handle decomposition into two handles, attached via some diffeomorphism $S^3\rightarrow S^3$, then $M$ is diffeomorphic to the standard $4$-sphere. 
This could be considered a reason why exotic $4$-spheres are difficult to find (if they exist): To construct them, one needs to deal with complicated smooth handle decompositions. 
\end{example}

Every manifold of dimension different from  $4$ admits a \emph{topological} handle decomposition. In contrast, the $4$-manifolds admitting topological handle decompositions are precisely those that admit a smooth structure.

\begin{example}
The $E_8$ $4$-manifold by Freedman does not admit any topological handle decomposition. 
\end{example}

\noindent In the next sections, we will see that all smooth manifolds admit a smooth handle decomposition (Theorem \ref{thm:smh1} and Proposition \ref{prp:smh2}), and that all PL triangulated manifolds admit a PL handle decomposition (Corollary \ref{PLtoPLhandle}).


\subsection{From Morse theory to smooth handles, and backwards} \label{sec:MorseHandle}
Let $M$ denote a closed smooth manifold. Let $f : M \rightarrow \R$ be a smooth function. 
As we know from calculus, the \emph{critical points} of $f$ are the points at which the gradient of $f$ vanishes. If $f$ is generic, it will have a finite number of isolated critical points (at which the Hessian of $f$ will be non-singular), and the critical points will have different images under $f$. 
Since $M$ is closed and compact, and $f$ is continuous, by Weierstrass' theorem $f$ must have a maximum and a minimum; so, if $\dim M > 0$, $f$ will have at least two critical points. From now on, a generic smooth function will be called a \emph{Morse function}, or sometimes a \emph{smooth Morse function}, to highlight the difference with discrete Morse theory.

\begin{definition}[Index of Critical Points]
Let $f : M \rightarrow \R$ be a Morse function. The index of a critical point $p \in M$ of $f$ is the dimension of the largest subspace of the tangent space $T_p M$ on which the Hessian of $f$ is negative definite. 
\end{definition}

Intuitively, the index is the number of independent directions around $p$ in which $f$ decreases. Any local minimum has index $0$; any local maximum has index equal to the dimension of $M$.  

\begin{lemma}[Morse] \label{lem:Morse}
Let $M$ denote a closed smooth manifold. Fix a generic smooth function $f : M \rightarrow \R$, and let $p_1, \ldots, p_k$ be its critical points. For any real number $a$, let us denote by $M_{\le a}$ the preimage under $f$ of the closed interval $(-\infty, a]$.  
\begin{compactenum}[\rm 1.]
\item Suppose the interval $[a,b]$ contains 
none of the critical values. Then $M_{\le b}$ is diffeomorphic to $M_{\le a}$.
\item Suppose the interval $[a,b]$ contains exactly one 
of the critical values, say, $f(p)$, in its interior. Let $i$ be the index of the critical point $p$. Then $M_{\le b}$ is diffeomorphic to the manifold obtained by attaching to $M_{\le a}$ a handle of index $i$ in the smooth category.
\end{compactenum}
\end{lemma}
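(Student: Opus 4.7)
The plan is to carry out the classical gradient-flow argument. Fix an auxiliary Riemannian metric on $M$, so that we have a well-defined gradient $\nabla f$, vanishing precisely at the critical points $p_1,\ldots,p_k$.

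For part 1, observe that $f^{-1}([a,b])$ is compact and contains no critical points, so $\|\nabla f\|^2$ is bounded below by a positive constant on it. Let $\rho$ be a smooth bump function, identically $1$ on $f^{-1}([a,b])$ and compactly supported in a slightly larger open neighborhood $V$ that still avoids every critical point. Define the vector field
\[
X \; = \; -\,\rho \cdot \frac{\nabla f}{\|\nabla f\|^2} \qquad \text{on } V,
\]
extended by zero elsewhere. Along any integral curve of $X$ inside $f^{-1}([a,b])$, the function $f$ decreases at unit rate, because $X(f) = -\rho$. Since $X$ is smooth and compactly supported, its flow $\varphi_t$ exists for all $t \in \R$ and consists of diffeomorphisms of $M$. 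Then $\varphi_{b-a}$ carries $M_{\le b}$ diffeomorphically onto $M_{\le a}$.

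For part 2, I would invoke the classical Morse lemma (the local normal form): near $p_1$ there exist smooth coordinates $(x_1,\ldots,x_d)$ centered at $p_1$ in which
\[
f \; = \; c \; - \; (x_1^2 + \cdots + x_i^2) \; + \; (x_{i+1}^2 + \cdots + x_d^2), \qquad c = f(p_1).
\]
Pick $\epsilon, \delta > 0$ small enough that $[c-\epsilon, c+\epsilon] \subset [a,b]$ contains only the critical value $c$ and that the chart covers the closed region $R = \{x_1^2+\cdots+x_i^2 \le \epsilon\} \cap \{x_{i+1}^2+\cdots+x_d^2 \le \delta\}$. In these coordinates the \emph{handle}
\[
H \; = \; R
\]
is manifestly diffeomorphic to $D^i \times D^{d-i}$, and its attaching region $\{x_1^2+\cdots+x_i^2 = \epsilon\} \cap \{x_{i+1}^2+\cdots+x_d^2 \le \delta\}$ is diffeomorphic to $S^{i-1}\times D^{d-i}$, as required by the definition of an $i$-handle. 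The strategy is to compare
\[
M_{\le c+\epsilon} \quad \text{and} \quad M_{\le c-\epsilon} \cup H.
\]
To do this I would modify the Morse function $f$ in a neighborhood of $p_1$ to a new smooth function $\widetilde f$ that agrees with $f$ outside a larger coordinate box and whose sublevel set $\widetilde f^{\,-1}((-\infty, c-\epsilon])$ is exactly $M_{\le c-\epsilon} \cup H$; this can be arranged by subtracting a suitable bump of height slightly larger than $\epsilon$ supported inside $R$, using the explicit quadratic form above. Since the modification preserves the critical set away from $p_1$ and kills the only critical value in the slab, the modified function $\widetilde f$ has no critical values in $[c-\epsilon, c+\epsilon]$. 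Applying part 1 to $\widetilde f$ yields a diffeomorphism
\[
M_{\le c+\epsilon} \; \cong \; \widetilde f^{\,-1}((-\infty, c-\epsilon]) \; = \; M_{\le c-\epsilon} \cup H.
\]
Combining this with two applications of part 1 to pass from $M_{\le a}$ to $M_{\le c-\epsilon}$ and from $M_{\le c+\epsilon}$ to $M_{\le b}$ completes the argument.

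The main obstacle is the modification step in part 2: the bump used to lower $f$ inside $R$ must be chosen so that (i) the new sublevel set at $c-\epsilon$ is exactly the old one together with the handle $H$ in the product form $D^i \times D^{d-i}$, and (ii) no new critical points of $\widetilde f$ are created inside the slab. The standard remedy is to write $\widetilde f = f - \mu(x_1^2+\cdots+x_i^2,\, x_{i+1}^2+\cdots+x_d^2)$ for a carefully chosen smooth function $\mu$ of two non-negative real variables, monotone in each, and then to compute the critical set explicitly from the quadratic normal form; this is routine but bookkeeping-intensive. Everything else reduces to the gradient-flow mechanism already used in part 1.
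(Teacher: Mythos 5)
Your argument is correct and is precisely the classical proof: the paper itself gives no argument for this lemma, deferring instead to Milnor's \emph{Morse Theory} and Matsumoto, and what you write (the normalized gradient flow $-\rho\,\nabla f/\|\nabla f\|^{2}$ for part~1, and the Morse-lemma normal form together with the modification $\widetilde f=f-\mu$ for part~2) is exactly the proof found in those references. The only detail worth flagging is that identifying $\widetilde f^{\,-1}((-\infty,c-\epsilon])$ with $M_{\le c-\epsilon}\cup H$ as a \emph{smooth} handle attachment requires the standard corner-straightening step where $\partial H$ meets $\partial M_{\le c-\epsilon}$, which you have implicitly subsumed into the ``bookkeeping'' but which is handled explicitly in the cited sources.
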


\begin{proof}
See Milnor \cite{Milnor} or Matsumoto \cite{Matsumoto}.
\end{proof}
\enlargethispage{3mm}

\begin{theorem}\label{thm:smh1}
Let $M$ be a closed smooth manifold. Let $f : M \rightarrow \R$ be an arbitrary (smooth) Morse function. 
Then, $M$ admits a smooth handle decomposition into $c_i$ handles of index~$i$, where $c_i$ counts the number of index-$i$ critical points of $f$.
\end{theorem}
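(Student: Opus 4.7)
The plan is to run Morse's Lemma (Lemma \ref{lem:Morse}) inductively across the critical values of $f$, tracking how the sublevel sets $M_{\le t}$ evolve as $t$ increases from below the global minimum of $f$ to above its global maximum.

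First, I would set up the sweep. Let $p_1, \ldots, p_k$ denote the critical points of $f$. Since $f$ is Morse (in particular generic) the critical values can be arranged to be distinct, and since $M$ is compact the number $k$ is finite. Reorder so that $f(p_1) < f(p_2) < \cdots < f(p_k)$, and choose regular levels $a_0 < f(p_1) < a_1 < f(p_2) < a_2 < \cdots < a_{k-1} < f(p_k) < a_k$. By construction each slab $f^{-1}[a_{h-1}, a_h]$ contains exactly one critical value, namely $f(p_h)$. Since $f$ attains its extrema on the compact manifold $M$, we also have $M_{\le a_0} = \emptyset$ and $M_{\le a_k} = M$.

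Next, I would argue by induction on $h$ that $M_{\le a_h}$ admits a smooth handle decomposition consisting of one $\mathrm{ind}(p_j)$-handle for each $j \le h$. The base case $h=0$ is vacuous, since $M_{\le a_0}$ is empty. For the induction step, I apply Lemma \ref{lem:Morse}.2 to the slab $[a_{h-1}, a_h]$: it tells us that $M_{\le a_h}$ is diffeomorphic to the manifold obtained by attaching an $\mathrm{ind}(p_h)$-handle to $M_{\le a_{h-1}}$ in the smooth category. Concatenating this attachment with the inductively given handle decomposition of $M_{\le a_{h-1}}$ yields the desired handle decomposition of $M_{\le a_h}$. Note the induction starts properly, because when $h=1$ the point $p_1$ is the global minimum of $f$, hence has index $0$, and Lemma \ref{lem:Morse}.2 amounts to the statement that $M_{\le a_1}$ is a small closed disk around $p_1$ --- the initial $0$-handle required to open any handle decomposition in the sense of the definition given above.

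Taking $h=k$ we obtain $M = M_{\le a_k}$ as a smooth handle decomposition with exactly $c_i$ handles of index $i$, which is the conclusion of the theorem. The only genuinely substantial step is Lemma \ref{lem:Morse}.2, whose proof relies on the Morse normal form $f = -x_1^2 - \cdots - x_i^2 + x_{i+1}^2 + \cdots + x_d^2$ in suitable smooth coordinates near each critical point and on a controlled gradient-flow retraction outside those neighborhoods; everything else is bookkeeping. The main obstacle one has to trust is the standard smoothing of corners after the handle attachment, which is what guarantees that the inductively built object is genuinely a \emph{smooth} manifold (and not merely a topological one with a stratified boundary); this is exactly what is packaged into the word ``diffeomorphic'' in Lemma \ref{lem:Morse}.
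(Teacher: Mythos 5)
Your proposal is correct and follows essentially the same route as the paper: order the critical values, interleave regular levels $a_0 < f(p_1) < a_1 < \cdots < f(p_k) < a_k$ so that $M_{\le a_0}=\emptyset$ and $M_{\le a_k}=M$, and apply Lemma \ref{lem:Morse} to each slab in turn. Your additional remarks (that the first handle is a $0$-handle because $p_1$ is the global minimum, and that corner-smoothing is packaged into the word ``diffeomorphic'') are accurate elaborations of points the paper leaves implicit.
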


\begin{proof}
Let $p_1, \ldots, p_k$ be the critical points of $f$. Up to relabeling them, we can assume $f(p_1) < f(p_2) < \ldots < f(p_k)$. Choose $k+1$ real numbers $\lambda_0, \lambda_1, \ldots, \lambda_k$ such that
\[\lambda_0 \; < \; f(p_1) \; < \; \lambda_1 \; < \; f(p_2) \; < \; \lambda_2 \; < \ldots \; < \; \lambda_{k-1} \; < \; f(p_k) \; < \; \lambda_k .\]
Since the minimum and the maximum of $f$ on $M$ are points at which the gradient of $f$ vanishes, the minimum of $f$ on $M$ must be $p_1$, and the maximum of $f$ on $M$ must be $p_k$. In particular, $\lambda_0 \, < \, \{f(x): x \in M \}  \, < \, \lambda_k$.
Hence $M_{\le \lambda_0}$ is empty, while $M_{\le \lambda_k} = M$. The conclusion then follows by applying Lemma \ref{lem:Morse} to all the intervals $[\lambda_i, \lambda_{i+1}]$, in this order.
\end{proof}

The converse also holds: Every smooth handle decomposition comes from a Morse function, as the following, well-known result shows.

\begin{prp}\label{prp:smh2}
Let $M$ be a closed smooth manifold. If $M$ has a smooth handle decomposition into $c_i$ handles of index~$i$, some Morse function on $M$ has $c_i$ critical points of index~$i$.
\end{prp}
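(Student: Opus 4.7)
The plan is to invert the construction of Theorem \ref{thm:smh1} by building a Morse function one handle at a time, so that each handle contributes exactly one critical point of the prescribed index. The local model on the standard $i$-handle $D^i\times D^{d-i}$ will be the quadratic function $\mu_i(x,y) = -\|x\|^2+\|y\|^2$, whose unique critical point is the origin, of index $i$; every other point is regular, and the gradient points outward in the $y$-direction and inward in the $x$-direction.

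First I would set up an induction on the number of attached handles. Write $M^{(h)} := H_0\cup H_1^{(i_1)}\cup\cdots\cup H_h^{(i_h)}$ and set $c_j^{(h)} := \#\{\ell\le h : i_\ell = j\}$. The inductive hypothesis is that $M^{(h)}$ carries a Morse function $f_h$ with (a) exactly $c_j^{(h)}$ critical points of index $j$ for every $j$; (b) $\partial M^{(h)}$ a regular level set $f_h^{-1}(h+\tfrac{1}{2})$; and (c) no critical points in a bicollar of $\partial M^{(h)}$. The base case is $M^{(0)} = H_0\cong D^d$ with $f_0(x) = \|x\|^2 - \tfrac{1}{2}$, whose only critical point is the origin, of index $0$, and whose boundary sphere is the regular level set at height $\tfrac{1}{2}$.

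For the inductive step, attach $H_h^{(i_h)}\cong D^{i_h}\times D^{d-i_h}$ along its attaching region $S^{i_h-1}\times D^{d-i_h}$. Using the collar of $\partial M^{(h-1)}$, isotope the attaching diffeomorphism so that it lands inside the region where $f_{h-1}$ equals $h-\tfrac{1}{2}$ plus a tiny increment, and rescale $\mu_{i_h}$ to obtain a function $g_h$ on $H_h$ with critical value $h$ whose values on the attaching sphere $\|x\|=1$ match the collar heights of $f_{h-1}$ and whose values on the belt $\|y\|=1$ equal $h+\tfrac{1}{2}$. A partition of unity supported in the collar smoothly interpolates $f_{h-1}$ with $g_h$; the interpolated function is again Morse because in the collar both $f_{h-1}$ and $g_h$ increase strictly along the outward normal direction, so any convex combination of their gradients retains a nonzero outward component and no spurious critical points are born. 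This yields $f_h$ on $M^{(h)}$ satisfying the three properties, with one new critical point, of index $i_h$ and value $h$.

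After $k$ inductive steps, $M^{(k)} = M$ (in the closed case the top handle is a $d$-handle contributing a local maximum), and the total critical-point count by index is $(c_0,\ldots,c_d)$ by construction. The main technical obstacle is the gluing step: one must verify that the partition-of-unity interpolation in the collar is smooth and creates no new critical points, and for this one uses crucially that \emph{smooth} handle attachment (unlike the topological or PL versions) includes the extendability of the attaching diffeomorphism to a neighborhood in the interior, allowing the two functions to be compared on a common smooth chart. The execution of this smoothing is classical; see Milnor \cite{Milnor} or Matsumoto \cite{Matsumoto}.
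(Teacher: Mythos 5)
The paper states Proposition \ref{prp:smh2} without any proof, presenting it as a well-known fact and deferring to the classical literature; your handle-by-handle construction with the quadratic model $-\|x\|^2+\|y\|^2$ and collar interpolation is precisely that classical argument (cf.\ Milnor \cite{Milnor} and Matsumoto \cite{Matsumoto}), so the sketch is sound and there is no competing argument in the paper to compare it with. The one step you rightly flag as delicate --- arranging the level sets of the handle function to match the collar of $f_{h-1}$, which requires smoothing the corner between the attaching region $\|x\|=1$ and the belt $\|y\|=1$, on neither of which $\mu_{i_h}$ is constant --- is exactly the technical content carried by those references, and your appeal to them there is appropriate.
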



One can extend to manifolds with boundary the definition of smooth Morse vector, as follows:

\begin{definition}
A manifold with boundary has \emph{Morse vector $(c_0,c_1,\ldots, c_d)$ } if it admits  a smooth handle decomposition with $c_i$ handles of index~$i$. 
\end{definition}

By Theorem \ref{thm:smh1} and Proposition \ref{prp:smh2}, if $M$ has empty boundary this definition coincides with the one in terms of the number of critical $i$-cells of a Morse function.

\subsection{From smooth to PL handles, and backwards}

Let us quickly review the relation between Morse theory and PL handles theory.

\begin{rem}[Smooth handle decompositions are also PL handle decompositions] \label{rem:StoPLhandle}
Suppose a smooth manifold $M$ (with or without boundary) has a smooth handle decomposition into smooth handles $H^{\operatorname{s}}_i$. 
By Theorem \ref{thm:pltr}, 
both handles can be PL triangulated to obtain a family of PL handles $H^{\operatorname{PL}}_i$ that are
attached along PL homeomorphisms of their boundary. The manifold resulting from gluing the $H^{\operatorname{PL}}_i$ is $M$, and the resulting triangulation is PL homeomorphic to the standard triangulation of $M$ obtained from Theorem \ref{thm:pltr}.
\end{rem}

The converse of Remark \ref{rem:StoPLhandle} is more delicate: As we have seen in Section \ref{sec:PLtoSmooth}, associated to a given PL manifold there might be no smooth structure, or more than one. However, in low dimensions, the obstruction theories of Munkres \cite{Munkressmooth} and Hirsch \cite{HundM} yield the following:

\begin{prp}[Hirsch \cite{HundM}, Munkres \cite{Munkressmooth}]
Let $M$ be a PL manifold of dimension $d\leq 7$, with or without boundary, that admits a PL handle decomposition into $c_i$ handles of index~$i$. Then $M$ admits a smooth handle decomposition with $c_i$ handles of index~$i$.
\end{prp}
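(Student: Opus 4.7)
The plan is to realize the given PL handle decomposition as a smooth one by inductively smoothing the attaching PL homeomorphisms, relying on Theorem \ref{thm:UniqueSmoothStructure} together with the Hirsch--Munkres smoothing theorem. Write
\[
M \;=\; H_0^{(0)} \cup H_1^{(i_1)} \cup \cdots \cup H_k^{(i_k)},
\]
and set $M_j := H_0^{(0)} \cup \cdots \cup H_j^{(i_j)}$. Since $d \leq 7$, Theorem \ref{thm:UniqueSmoothStructure} fixes a compatible smooth structure on $M$ and, by restriction, on each $M_j$; moreover, because $\dim \partial M_j = d-1 \leq 6$, the induced smooth structure on each boundary is unique up to diffeomorphism.

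First I would treat the base case: $M_0$ is a PL $d$-ball, PL-homeomorphic to the standard simplex, hence carries the standard smooth $d$-ball structure, realized as a single smooth $0$-handle. For the inductive step, assume $M_j$ has already been presented as a smooth handle body with handle indices $0, i_1, \ldots, i_j$. The attaching map of the next PL handle is a PL homeomorphism
\[ \phi : S^{i-1} \times \II^{d-i} \;\longrightarrow\; A' \;\subset\; \partial M_j, \]
between two PL submanifolds of dimension $d-1 \leq 6$, each inheriting a smooth structure from the smoothings of $M_j$ and of $H_{j+1}^{(i)}$ (the latter is a PL ball of dimension $\leq 7$, hence carries a compatible smooth $d$-ball structure). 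By Hirsch--Munkres smoothing theory~\cite{HundM, Munkressmooth}, in this dimension range such a $\phi$ is PL-isotopic to a diffeomorphism $\widetilde\phi$ compatible with the smooth structures on source and target. Attaching $H_{j+1}^{(i)}$ along $\widetilde\phi$, and rounding the resulting codimension-two corner in the standard way, presents $M_{j+1}$ with one additional smooth $i$-handle.

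The main obstacle I anticipate is verifying that the inductively-built smooth structures on the $M_j$ match a single compatible smooth structure on $M$, rather than producing a fresh smoothing at each step that could accidentally change the resulting diffeomorphism type. This is exactly the content of the relative part of Hirsch--Munkres smoothing in dimension $\leq 6$: any smoothing on a PL submanifold of $\partial M_j$ extends to a smoothing of the handle, unique up to concordance. This relative uniqueness is precisely why the hypothesis $d \leq 7$ is needed; in higher dimensions the obstructions no longer vanish, as reflected by phenomena such as Kervaire's non-smoothable PL manifold (Example~\ref{ex:Kervaire}). Iterating the inductive step up to $j = k$ then produces the desired smooth handle decomposition of $M$ with $c_i$ handles of index $i$.
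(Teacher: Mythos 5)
The paper does not actually prove this Proposition: it is quoted as a known consequence of the smoothing theories of Munkres \cite{Munkressmooth} and Hirsch--Mazur \cite{HundM}, with no argument supplied. So there is no ``paper proof'' to compare against; what you have written is a reconstruction of the standard argument hiding behind that citation, and it is essentially sound. Your induction over the handles, smoothing each attaching PL homeomorphism of the $(d-1)$-dimensional attaching regions (where $d-1\le 6$, so the obstruction groups $H^i(\,\cdot\,;\Gamma_i)$ vanish and every PL homeomorphism between smoothings is isotopic to a diffeomorphism), and then rounding corners, is exactly how one deduces the statement from the cited references; and you correctly identify that the real content is the \emph{relative} existence and uniqueness of smoothings, which is what lets the step-by-step smoothings cohere.

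Two small points are worth tightening. First, the opening claim that the smooth structure on $M$ induces one on each $M_j$ ``by restriction'' is not literally correct, since $\partial M_j$ need only be a PL, not a smooth, submanifold of $M$; fortunately this remark is not load-bearing, because your induction constructs the smoothings of the $M_j$ from scratch. Second, for $d=7$ uniqueness of the compatible smooth structure fails (only existence survives from Theorem \ref{thm:UniqueSmoothStructure}), so one cannot insist that the inductively built smoothing agree with a prechosen one; but the Proposition only asserts that \emph{some} compatible smoothing of $M$ carries the handle decomposition, and your construction delivers a smooth manifold glued from smooth handles along diffeomorphisms PL-isotopic to the original attaching maps, hence PL homeomorphic to $M$. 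With those caveats acknowledged, the argument is a faithful expansion of what the paper takes from Hirsch--Mazur and Munkres.
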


Combining this with Remark \ref{rem:StoPLhandle}, Theorem \ref{thm:smh1} and Proposition \ref{prp:smh2}, we obtain:

\begin{theorem} \label{thm:smoothtoPL}
Let $M$ denote a manifold of dimension $d\leq 7$ (with or without boundary). The following are equivalent:
\begin{compactenum}[\rm (i)]
\item $M$ admits a smooth Morse function with $c_k$ critical points of index~$k$. 
\item $M$ admits a smooth handle decomposition with $c_k$ handles of index~$k$.
\item $M$ admits a PL handle decomposition with $c_k$ handles of index~$k$.
\end{compactenum}
For $d> 7$, one has {\rm (i)} $\Leftrightarrow$ {\rm (ii)} $\Rightarrow$ {\rm (iii)}, but the converse of the last implication is false. 
\end{theorem}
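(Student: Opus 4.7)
\begin{proofp}
\textbf{Proof proposal.} The plan is to assemble the three equivalences from the results already collected in this section, and then to exhibit a counterexample in high dimensions using Kervaire's manifold.

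First I would establish $(\mathrm{i}) \Leftrightarrow (\mathrm{ii})$ in \emph{any} dimension (with empty boundary). This is immediate: Theorem \ref{thm:smh1} produces from any Morse function a smooth handle decomposition with the same critical-index counts, and Proposition \ref{prp:smh2} provides the converse. For manifolds with boundary, the equivalence is built into the definition of ``Morse vector'' given just before Subsection 1.7; so there is nothing more to check in this step.

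Next I would verify $(\mathrm{ii}) \Rightarrow (\mathrm{iii})$, again in any dimension. This is Remark \ref{rem:StoPLhandle}: applying Cairns' Theorem \ref{thm:pltr} to each smooth handle and to the attaching diffeomorphisms of the decomposition yields PL handles $H_i^{\operatorname{PL}}$, attached by PL homeomorphisms of their boundaries, with no change in the number $c_k$ of $k$-handles. The implication $(\mathrm{iii}) \Rightarrow (\mathrm{ii})$ in the range $d \le 7$ is exactly the Hirsch--Munkres proposition stated just before the theorem, which smooths any PL handle decomposition preserving the handle indices. Chaining these three steps together gives all three equivalences for $d\le 7$, while for $d>7$ the equivalence $(\mathrm{i})\Leftrightarrow(\mathrm{ii})$ and the implication $(\mathrm{ii})\Rightarrow(\mathrm{iii})$ already established give the claimed ``$\Leftrightarrow$'' and ``$\Rightarrow$'' respectively.

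The main (and essentially only) obstacle is therefore producing a high-dimensional counterexample showing that $(\mathrm{iii}) \not\Rightarrow (\mathrm{ii})$ when $d>7$. For this I would invoke Example \ref{ex:Kervaire}: Kervaire's $10$-dimensional PL manifold admits no smooth structure, not even up to homotopy equivalence. Since the quoted Corollary \ref{PLtoPLhandle} asserts that every PL manifold admits a PL handle decomposition, Kervaire's manifold satisfies $(\mathrm{iii})$. But if it satisfied $(\mathrm{ii})$, then smoothing each handle via Cairns' theorem and gluing as in Remark \ref{rem:StoPLhandle} would equip it with a smooth structure (or at least show it homotopy equivalent to the smooth manifold so produced), contradicting Kervaire's theorem. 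Hence $(\mathrm{iii})$ holds but $(\mathrm{ii})$ fails, completing the proof. I expect the step requiring the most care to state cleanly is the last one, since one must be precise about what ``PL handle decomposition'' buys us topologically versus smoothly; but the obstruction theory of Munkres and Hirsch, already invoked in the statement above, is exactly the tool that makes the argument rigorous.
\end{proofp}
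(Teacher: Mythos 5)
Your proposal is correct and follows essentially the same route as the paper, which obtains the theorem by combining Theorem \ref{thm:smh1}, Proposition \ref{prp:smh2}, Remark \ref{rem:StoPLhandle}, and the Hirsch--Munkres smoothing proposition in exactly the way you describe. Your only addition is to spell out the $d>7$ counterexample via Kervaire's $10$-manifold (Example \ref{ex:Kervaire}) together with Corollary \ref{PLtoPLhandle}, which the paper leaves implicit; this is the intended justification and involves no circularity, since Corollary \ref{PLtoPLhandle} does not depend on the present theorem.
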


\section{Main results}\label{sec:PLdiscrete}
Recall that a smooth $d$-manifold has \emph{Morse vector $(c_0,c_1,\ldots, c_d)$ } if it admits a smooth handle decomposition with exactly $c_i$ handles of index~$i$. If the manifold is closed, this is equivalent to admitting a Morse function with exactly $c_i$ critical points of index $i$. Analogously, a $d$-dimensional simplicial complex has \emph{discrete Morse vector $(c_0,c_1,\ldots, c_d)$ } if it admits a discrete Morse function with exactly $c_i$ critical $i$-cells. Finally, we say that a PL manifold has  \emph{PL handle vector $(c_0,c_1,\ldots, c_d)$ } if it admits a PL handle decomposition with exactly $c_i$ handles of index~$i$. 

As we explained in the previous section, it is known that 
\[ 
\left \{\textrm{Morse vectors of M} \right \}
\subset 
\left \{\textrm{PL handle vectors of M} \right \},
\]
with equality if the dimension of $M$ is at most seven. In this section, we prove the following result:

\begin{theorem} \label{thm:EquivalenceOfMorseVectors}
For any PL $d$-manifold $M$, with or without boundary, 
{\em
\[ 
\big \{\textrm{PL handle vectors of M} \big \}
=
\Big \{\!\!\begin{array}{c}
	\textrm{discrete Morse vectors} \\
	\textrm{on PL triangulations of M} 
	\end{array} \!\!\Big\}
\subset
\Big \{\!\!\begin{array}{c}
	\textrm{discrete Morse vectors} \\
	\textrm{on triangulations of M} 
	\end{array} \!\!\Big\},
\]
}
and the inclusion on the right is not an equality in general.
\end{theorem}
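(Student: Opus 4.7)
The right-hand containment is tautological, since every PL triangulation is a triangulation, so the substance lies in the equality on the left and the failure of equality on the right. I would split the equality into two independent inclusions, proved separately, and address the strictness claim at the end.

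For $\{\textrm{PL handle vectors of }M\} \subseteq \{\textrm{discrete Morse vectors on PL triangulations of }M\}$, the plan is to start from a PL handle decomposition $M = H_0^{(0)} \cup H_1^{(i_1)} \cup \cdots \cup H_k^{(i_k)}$ and, via sufficiently many iterated derived subdivisions, apply the shelling/collapsing technology of \cite{Adiprasito, AB-SSZ} so that each handle becomes a shellable PL $d$-ball with compatible subdivisions along the attaching products $S^{i_h-1} \times \II^{d-i_h}$. A shellable PL $d$-ball carries a discrete Morse function with a single critical vertex, and $S^{i-1} \times \II^{d-i}$ carries a discrete Morse function with a single critical $(i-1)$-face; the strategy is then to glue these pieces inductively through the attachments, in the order of handle attachment, using the boundary-composition framework for discrete Morse functions developed in \cite{B-DMT4MWB}. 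At the $h$-th step the pairing is arranged so that exactly one critical $i_h$-cell (the cocore of $H_h^{(i_h)}$) survives, producing a global discrete Morse function on the resulting PL triangulation of $M$ with the prescribed counts $(c_0, \ldots, c_d)$.

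For the converse inclusion, given a PL triangulation $T$ of $M$ and a discrete Morse function $f$ realizing $(c_0, \ldots, c_d)$, I would associate to each critical $i$-face $\sigma$ the Jer\u{s}e--Mramor-Kosta descending region $D_\sigma$ and then pass to a regular neighborhood $N_\sigma$ of $D_\sigma$ in a suitable derived subdivision of $T$. Invoking the strengthened Regular Neighborhood theorem (Corollary~\ref{cor:shellbary}), each $N_\sigma$ is a PL $d$-ball, and the attaching region $N_\sigma \cap \bigcup_{\tau < \sigma} N_\tau$ is PL-homeomorphic to $S^{i-1} \times \II^{d-i}$, so $N_\sigma$ is a genuine PL $i$-handle; ordering the critical faces by $f$-value and gluing in that order yields a PL handle decomposition of $|T| \cong M$ with $c_i$ handles of index $i$. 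The central new input over \cite{JerseMramorKosta} is that in the PL setting $\bigcup_\sigma N_\sigma$ is PL-homeomorphic to all of $|T|$, not merely a deformation retract of it.

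The main obstacle I expect is precisely in this converse direction: upgrading the homotopy-theoretic deformation retraction produced by the descending-region argument to a genuine PL homeomorphism, and verifying that each regular neighborhood attaches along the prescribed sphere-by-ball product. This is where the PL hypothesis on $T$ enters essentially, through Corollary~\ref{cor:shellbary}; Remark~\ref{rem:NonHomeomorphic} shows the conclusion can fail without it. Finally, the strictness of the right-hand inclusion is a separate and delicate matter, and the plan is to invoke Main Proposition~C (Proposition~\ref{mthm:forman}): the non-PL triangulations produced there realize discrete Morse vectors that, by the equality just established, are not PL handle vectors of the corresponding manifolds, and hence cannot be realized on any PL triangulation of those manifolds.
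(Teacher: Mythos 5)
Your treatment of the two inclusions follows the paper's route: the paper proves exactly these two statements as Theorem \ref{thm:PLtoDiscrete} (handles $\leadsto$ discrete Morse, via shellable subdivisions of the handles from Lemma \ref{lem:ShellableSubdivision} and the gluing Lemmas \ref{lem:HandleAttachment0}--\ref{lem:HandleAttachment}) and Theorem \ref{thm:dmttopl} (discrete Morse $\leadsto$ handles, via regular neighborhoods, Corollary \ref{cor:shellbary} and Lemma \ref{prp:NewC}); the paper organizes the latter as an induction peeling off the top critical face rather than assembling all descending regions at once, but the technical inputs are the same.

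The genuine gap is in your argument for strictness of the right-hand inclusion. You invoke Proposition \ref{prp:forman} (equivalently Corollary \ref{cor:A}): non-PL triangulations of the $d$-ball, resp.\ $d$-sphere, with discrete Morse vector $(1,0,\ldots,0)$, resp.\ $(1,0,\ldots,0,1)$. But these vectors \emph{are} PL handle vectors of the ball and of the sphere (one $0$-handle; one $0$-handle plus one $d$-handle), so these examples witness nothing: the vector realized on the non-PL triangulation is also realized on a PL one. Your inference ``by the equality just established, [these vectors] are not PL handle vectors'' is also logically backwards --- the equality concerns PL triangulations only and says nothing about vectors realized on non-PL triangulations. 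What is actually needed, and what the paper uses (Example \ref{ex:AdiprasitoBenedetti}, from \cite{randomDMT2}), is a \emph{collapsible non-PL triangulation of a contractible $5$-manifold that is not homeomorphic to a ball}: there $(1,0,0,0,0,0)$ is a discrete Morse vector on a non-PL triangulation, yet it cannot be a PL handle vector, because a manifold with a PL handle decomposition into a single $0$-handle is by definition a PL ball, and this manifold is not even topologically a ball (its boundary has nontrivial fundamental group). The exclusion comes from this topological obstruction, not from the established equality.
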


The proof will be organized as follows. 
In Section \ref{subs:back} we will show  that every discrete Morse vector on a PL triangulation of $M$ is also a PL handle vector. In Section \ref{subs:1} we will prove the converse, namely, that every PL handle vector is also a discrete Morse vector on \emph{some} PL triangulation. (In fact, we show something stronger, namely, that \emph{any} PL triangulation can be used, up to subdividing it barycentrically a suitable number of times.) Finally, Example \ref{ex:AdiprasitoBenedetti} is a $5$-manifold on which $(1,0,0,0,0,0)$ is a discrete Morse vector (on some non-PL triangulation), but not a PL handle vector.

\subsection{From PL discrete Morse vectors to PL handles} \label{subs:back}
Recall that a discrete Morse function on a complex $C$ is any map $f : C \rightarrow \mathbb{Q}$ that satisfies: 
\begin{compactenum}[(i)]
\item (Monotonicity) If $\sigma \subset \tau$, then $f(\sigma) \le f(\tau)$.
\item (Semi-injectivity) For each $q \in \mathbb{Q}$, the cardinality of $f^{-1} (q)$ is at most $2$.
\item (Genericity) If  $f(\sigma) = f(\tau)$, then either $\sigma \subset \tau$ or $\tau \subset \sigma$.
\end{compactenum}
\noindent The \emph{critical faces} of $C$ for $f$ are the faces at which $f$ is injective. 
Let us denote by $\sd^m C$ the $m$-th iterated barycentric subdivision of a complex $C$,  recursively defined as $\sd^m C \, = \, \sd ( \sd^{m-1} C )$.
Our first goal is to prove the following result:

\begin{theorem}\label{thm:dmttopl}
If a topological manifold has a PL triangulation $M$ on which some discrete Morse function has $c_i$ critical $i$-faces, then for some $r$ the complex $\sd^r M$ has a PL handle decomposition with $c_i$ $i$-handles.
\end{theorem}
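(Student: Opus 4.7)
The plan is to convert each critical $i$-face into a PL $i$-handle, by thickening the associated Jer\v{s}e--Mramor--Kosta descending region inside a sufficiently fine barycentric subdivision.

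First I would assign to each critical face $\sigma$ of dimension $i$ its \emph{descending region} $D_\sigma\subset M$, namely the subcomplex swept out by gradient $V$-paths starting at $\sigma$. The properties I need are: $D_\sigma$ collapses onto $\sigma$; distinct descending regions meet only along their lower-dimensional faces; and, by \cite{JerseMramorKosta}, $\bigcup_\sigma D_\sigma$ is a deformation retract of $M$. Ordering the critical faces $\sigma_1,\ldots,\sigma_N$ by increasing discrete Morse value I obtain a natural filtration $C_k := D_{\sigma_1}\cup\cdots\cup D_{\sigma_k}$ in which $\sigma_k$ is the ``top'' critical cell appearing at stage $k$.

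Next I would choose $r$ large enough that the shelling machinery of Adiprasito (Theorem~\ref{thm:subdivision}) renders every $D_\sigma$ shellable in $\sd^r M$, and form the simplicial regular neighborhoods $N_\sigma := N(D_\sigma,\sd^r M)$. By Whitehead's Regular Neighborhood Theorem, strengthened to the PL-manifold setting via Corollary~\ref{cor:shellbary}, each $N_\sigma$ is a PL $d$-ball. The PL hypothesis on $M$ is what upgrades the Jer\v{s}e--Mramor--Kosta deformation retract to an honest equality
\[
\sd^r M \;=\; \bigcup_{\sigma} N_\sigma,
\]
an improvement obstructed precisely by the non-PL example of Remark~\ref{rem:NonHomeomorphic}. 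Setting $M_k := N_{\sigma_1}\cup\cdots\cup N_{\sigma_k}$, I would then verify inductively that $N_{\sigma_{k+1}}$ is attached to $M_k$ along a PL $(d-1)$-submanifold of $\partial N_{\sigma_{k+1}}$ that is PL homeomorphic to $S^{i-1}\times\II^{d-i}$ with $i = \dim\sigma_{k+1}$: concretely, the boundary of the descending $i$-cell times the transverse $(d-i)$-dimensional collapsing directions supplied by the shelling. This exhibits $N_{\sigma_{k+1}}$ as a PL $i$-handle, so $\sd^r M = M_N$ inherits a PL handle decomposition with exactly $c_i$ handles of index $i$.

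The hard part will be the product-structure claim in the attaching-region step. Whitehead's theorem alone says each $N_\sigma$ is a PL ball, but a PL handle decomposition demands the much stronger statement that every attaching region be the \emph{standard} product $S^{i-1}\times\II^{d-i}$ in the PL category, and moreover that these product structures be compatible with the filtration. This is precisely what the shelling-after-subdivision technique of \cite{AB-SSZ} (applied to each $D_\sigma$ simultaneously) delivers, and it is also the reason why a fixed $M$ cannot be expected to suffice and one must pass to $\sd^r M$ with $r$ possibly large.
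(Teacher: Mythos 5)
Your overall strategy --- thicken the Jer\v{s}e--Mramor descending regions and read off the handles --- is exactly the approach that the paper's introduction describes as the natural first attempt, but your write-up leaves the one genuinely hard step unproved, and the tools you invoke do not supply it. The gap is the product structure of the attaching region. The descending region $D_{\sigma_{k+1}}$ of a critical $i$-cell is not the cell $\sigma_{k+1}$ alone: it contains all the $V$-paths flowing down from $\partial\sigma_{k+1}$, and these wander through the earlier regions. Hence $N_{\sigma_{k+1}}\cap M_k$ is a regular neighborhood of the (typically large and irregular) subcomplex $D_{\sigma_{k+1}}\cap C_k$, and there is no reason for it to be $S^{i-1}\times\II^{d-i}$; in the smooth analogue the handle is a neighborhood of the piece of the descending disk \emph{between two level sets}, not of the entire descending manifold. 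Corollary \ref{cor:shellbary} and Theorem \ref{thm:subdivision} only upgrade collapses to shellings of regular neighborhoods after subdivision, and Theorem \ref{thm:WTHM} only tells you that each $N_\sigma$ is a PL ball; none of these produce the standard product $S^{i-1}\times\II^{d-i}$ on the attaching region, so asserting that the shelling-after-subdivision technique ``delivers'' it is precisely where the proof breaks. Two smaller problems: the asserted equality $\sd^r M=\bigcup_\sigma N_\sigma$ is false in general for manifolds with boundary (even the homeomorphism is a nontrivial claim and is part of what must be proved), and the collapse of $D_\sigma$ onto $\sigma$ needs justification, since a descending region generally collapses onto $\sigma$ together with the lower descending regions into which its gradient paths flow.

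The paper avoids all of this by filtering with \emph{sublevel complexes} rather than descending regions, and by inducting on the number of critical cells: if $\tau$ is the critical face of highest value and $\varTheta$ is the subcomplex of faces of smaller value, then $M$ collapses onto $\varTheta\cup\tau$, Corollary \ref{cor:shellbary} makes $\sd^n N^m(M,M)$ shell to $\sd^n N^m(\varTheta\cup\tau,M)$, and the product structure of the attached piece is supplied by the separate Lemma \ref{prp:NewC} --- a purely local statement about the regular neighborhood of a full subcomplex before and after removing a single top-dimensional cell. That lemma is the ingredient your argument is missing, and it applies only because at each stage exactly one cell is being added. To rescue your plan you would have to either prove the product-structure claim for $N_{\sigma_{k+1}}\cap M_k$ directly, or reorganize the induction so that each step adjoins a single critical cell to a sublevel complex --- at which point you have reproduced the paper's proof.
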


The proof of Theorem \ref{thm:dmttopl}, which reflects joint work with Adiprasito, relies on three results in PL topology. Two of them are very old and due to Whitehead (Lemma \ref{lem:RegNeigh} and Theorem \ref{thm:WTHM}); the other is very recent (Theorem \ref{thm:subdivision}). Aside from these tools, the proof is inspired by the 2009 paper by Jer\u{s}e and Mramor-Kosta \cite{JerseMramorKosta}, which shows how to divide a (PL or non-PL) triangulation into ``descending regions''. These regions may be lower-dimensional, but there is a standard trick to fix this, namely, one can thicken them by passing to regular neighborhoods, as we will now explain. If $D$ is a subcomplex of a manifold $M$, the \emph{$m$-th derived neighborhood of $D$ in $M$}, usually denoted by $N^m(D,M)$, is the subcomplex of $\sd^m (M)$ formed by the cells whose closure intersects $|D|$. 
For any $m \ge 2$, the underlying spaces of $N^m(D,M)$, is called in the literature \emph{a regular neighborhood} of $D$ in $M$.
The regular neighborhood of any subcomplex collapses onto it after appropriate subdivision:

\begin{lemma}[{Whitehead, cf. Glaser \cite[Lemmas III.9 and III.10]{Glaser}}]\label{lem:RegNeigh}
Let $C$ be a subcomplex of a (PL or non-PL) triangulated manifold $M$. For each $m \ge 2$, the $m$-th derived neighborhood of $C$ collapses onto $\sd^m (C)$. 
\end{lemma}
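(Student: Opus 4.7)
The plan is a two-step reduction followed by an explicit discrete Morse matching. First, observe that $|\sd^{m}C|=|\sd^{m-1}(\sd C)|$ as subsets of $|\sd^{m}M|=|\sd^{m-1}(\sd M)|$, so the definition of derived neighborhood immediately yields $N^{m}(C,M)=N^{m-1}(\sd C,\sd M)$. Combined with the classical fact that the barycentric subdivision of any subcomplex is a \emph{full} subcomplex of the ambient subdivision (a flag $\hat F_{0}\cdots\hat F_{k}$ of $\sd M$ lies in $\sd C$ precisely when every $F_{i}$ does), this reduces the lemma, for $m\ge 2$, to the following local statement: if $K$ is a full subcomplex of $L$, then $N^{1}(K,L)$ collapses onto $\sd K$ (which we identify with $C$ on the level of underlying spaces).

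Second, I would define a partial matching on the simplices of $N^{1}(K,L)$. A simplex of $\sd L$ is a strict chain $F_{0}\subsetneq\cdots\subsetneq F_{k}$; such a chain lies in $N^{1}(K,L)$ iff $F_{0}\in K$ (equivalently, since $K$ is downward closed, some $F_{i}\in K$), and it lies in $\sd K$ iff all $F_{i}\in K$. For a chain not in $\sd K$, set $j_{0}:=\max\{i:F_{i}\in K\}<k$ and $\sigma:=F_{j_{0}+1}\notin K$. By fullness of $K$, the subcomplex of faces of $\sigma$ that lie in $K$ has a unique maximal element $G_{\sigma}$, and $F_{j_{0}}\subseteq G_{\sigma}\subsetneq\sigma$. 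Pair the chain with the one obtained by \emph{inserting} $\hat G_{\sigma}$ between $\hat F_{j_{0}}$ and $\hat\sigma$ when $G_{\sigma}\neq F_{j_{0}}$, and by \emph{deleting} $\hat F_{j_{0}}=\hat G_{\sigma}$ when $G_{\sigma}=F_{j_{0}}$. A direct check shows this is an involution whose unmatched cells are exactly the flags lying in $\sd K$.

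The hardest step is verifying that this matching is \emph{acyclic}, so that Forman's theorem converts it to a genuine sequence of elementary collapses whose critical cells constitute the subcomplex $\sd K$. The crucial point is that the pivot $\sigma$ is preserved by the pairing, so the matching restricts to each family of flags sharing a fixed $\sigma$, and one can process these families in decreasing order of $\dim\sigma$. Inside a single family, the matching reduces to the standard matching that collapses a cone along its apex, which is patently acyclic. Forman's criterion then yields the desired collapse of $N^{1}(K,L)$ onto $\sd K$, and substituting back $K=\sd^{m-1}C$, $L=\sd^{m-1}M$ gives that $N^{m}(C,M)$ collapses onto $\sd^{m}C$, which has underlying space $|C|$. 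The main obstacle is precisely this bookkeeping for acyclicity, and it also explains why $m=1$ does not suffice: without fullness the pair $(G_{\sigma},\sigma)$ need not be well-defined, so the corresponding matching breaks down.
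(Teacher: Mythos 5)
Your overall strategy --- rewriting $N^{m}(C,M)=N^{1}(\sd^{m-1}C,\sd^{m-1}M)$, invoking fullness of $\sd^{m-1}C$ in $\sd^{m-1}M$, and then collapsing $N^{1}(K,L)$ onto $\sd K$ for $K$ full in $L$ via a matching governed by the pivot $\sigma$ and its maximal $K$-face $G_{\sigma}$ --- is the standard route to this classical lemma (the paper gives no proof and simply cites Glaser; the usual argument is the join-structure collapse, which your matching encodes in discrete Morse language). There is, however, a genuine gap in your identification of the cells of $N^{1}(K,L)$. The set of flags $F_{0}\subsetneq\cdots\subsetneq F_{k}$ with $F_{0}\in K$ is \emph{not} a subcomplex of $\sd L$: deleting $F_{0}$ from such a flag can produce a flag whose minimum is not in $K$. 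Concretely, if $K=\{v\}$ and $e$ is an edge of $L$ containing $v$, then $[\hat v,\hat e]$ meets $|K|$, so its face $\hat e$ lies in the derived neighborhood, yet the flag $(e)$ has no member in $K$. (The paper's phrasing of the definition is loose on this point, but the object that is actually a regular neighborhood --- and the one needed for Theorem \ref{thm:WTHM} and its applications --- is the smallest subcomplex containing all simplices whose closure meets $|K|$, i.e.\ all flags whose minimum $F_{0}$ has a vertex in $K$, with no requirement that any $F_{i}$ itself lie in $K$.) On those extra flags your index $j_{0}=\max\{i:F_{i}\in K\}$ is undefined, they are left unmatched, and they do not belong to $\sd K$; so as written the critical cells do not form the subcomplex $\sd K$, and Forman's theorem does not yield the claimed collapse.

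The gap is repairable inside your own framework. Adopt the convention $F_{-1}=\emptyset$, so a flag with no member in $K$ has pivot $\sigma=F_{0}$; by fullness $F_{0}$ has a unique maximal face $G_{F_{0}}\neq\emptyset$ in $K$ (it is spanned by the vertices of $F_{0}$ lying in $K$, which exist precisely because the flag belongs to the neighborhood), and you match such a flag by inserting $\hat G_{F_{0}}$ below $\hat F_{0}$. The partner then falls into your deletion case, the map remains an involution, and your acyclicity bookkeeping survives unchanged: matched moves preserve the pivot, face moves either preserve it or strictly increase its dimension, so a directed cycle is confined to a single pivot family, where the matching is the cone matching on the fixed vertex $\hat G_{\sigma}$. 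With that amendment the proof is correct; it is essentially the classical Whitehead--Glaser argument recast as an acyclic matching rather than as an explicit sequence of elementary collapses along the decomposition of each simplex as the join of its $K$-part and its non-$K$-part.
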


In particular, if we have a discrete Morse function defined on a subcomplex of some PL manifold, we can lift it to a discrete Morse function defined on a regular neighborhood of the subcomplex:

\begin{lemma}\label{lem:pusd}
Let $C$ be a subcomplex of a PL triangulated manifold $M$. Suppose $C$ admits a discrete Morse function with $c_i$ critical $i$-faces. For any $n\ge 0$ and $m \ge 2$ integers, also the complex $\sd^n N^m(C,M)$ admits a discrete Morse function with $c_i$ critical $i$-faces. 
\end{lemma}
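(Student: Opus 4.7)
The plan is to reduce the lemma to two independent auxiliary facts, both standard in discrete Morse theory:
\begin{compactenum}[(A)]
\item If $K$ simplicially collapses onto a subcomplex $L$, and $L$ admits a discrete Morse function with $c_i$ critical $i$-faces, then so does $K$.
\item If $L$ admits a discrete Morse function with $c_i$ critical $i$-faces, then so does $\sd L$.
\end{compactenum}
Granted these, the lemma is immediate: by Lemma \ref{lem:RegNeigh}, $N^m(C,M) \searrow C$ for every $m\geq 2$, so (A) gives a discrete Morse function on $N^m(C,M)$ with $c_i$ critical $i$-faces, and iterating (B) a total of $n$ times (vacuously if $n=0$) yields the claimed discrete Morse function on $\sd^n N^m(C,M)$.

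For (A) the argument is by concatenation of Morse matchings. A collapse $K \searrow L$ decomposes into elementary collapses, each removing a free face $\sigma$ together with its unique maximal coface $\tau$; the resulting pairs $(\sigma,\tau)$ form a Morse matching on the cells of $K \setminus L$ with every cell matched. Combined with the Morse matching of the given function on $L$ — whose unmatched cells are precisely the $c_i$ critical ones — this produces an acyclic Morse matching on $K$ whose critical cells are exactly those of $L$. By Forman's correspondence between acyclic Morse matchings and discrete Morse functions, some discrete Morse function on $K$ realizes this matching, and has the required number of critical $i$-faces in each dimension.

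The main obstacle will be (B), the invariance of the discrete Morse vector under barycentric subdivision. The construction is local: for each matched pair $(\sigma,\tau)$ of $L$, the simplices of $\sd L$ supported in the relative interior of $|\sigma|\cup|\tau|$ should be paired off by a canonical collapse of that region, with no cell left unmatched; for each critical $i$-face $\sigma$ of $L$, the simplices of $\sd L$ supported in the relative interior of $\sigma$ should all be paired off except for a single designated critical $i$-simplex, which can be taken to be a maximal flag terminating at $\hat\sigma$. The delicate point is to organize these local pairings — constructed one matched pair or critical face at a time, in an order compatible with the face lattice of $L$ — so that the resulting matching on $\sd L$ is globally well-defined and acyclic, with exactly the $c_i$ designated critical simplices left unmatched. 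This is a well-established but technical combinatorial verification, and forms the technical heart of the proof.
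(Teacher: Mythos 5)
Your proposal is correct and follows essentially the same route as the paper: the paper's proof likewise combines the subdivision-invariance of the discrete Morse vector (your fact (B), applied $n+m$ times to $C$) with the lifting of a discrete Morse function along the collapse of Lemma \ref{lem:RegNeigh} (your fact (A), applied to $\sd^n N^m(C,M)\searrow \sd^{n+m}C$), treating both auxiliary facts as standard rather than re-proving them. The only cosmetic difference is the order of the two steps, plus the small precision that the derived neighborhood collapses onto $\sd^m C$ rather than onto $C$ itself, which your use of (B) absorbs in any case.
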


\begin{proof} The barycentric subdivision of $C$ also admits a discrete Morse function with $c_i$ critical $i$-faces; and by induction, so does $\sd^{n+m} C$. By Lemma \ref{lem:RegNeigh}, the regular neighborhood $\sd^n N^m(C,M)=N^m(\sd^n C, \sd^n M)$ collapses onto $\sd^{n+m} C$, whence we conclude.
\end{proof}

Collapses are not internal operations with respect to the class of triangulated manifolds. However, using regular neighborhoods, as explained for example in Rourke--Sanderson \cite{RourkeSanderson}, it is possible to introduce a structure that allows us to speak in the language of manifolds.

\begin{definition}[{cf.~Rourke--Sanderson \cite[p.~40]{RourkeSanderson}}]
Let $M$ be a PL triangulated $d$-manifold. Let $\Delta$ be a $d$-face of $M$. Let $M'$ be the submanifold of $M$ that contains all $d$-faces of $M$, except $\Delta$. If $M$ collapses onto $M'$, the reduction from $M$ to $M'$ is called \emph{elementary shelling}.  We say that $M$ \emph{shells to} $M'$ if some sequence of elementary shelling reduces $M$ to $M'$.
\end{definition}

\begin{rem} \label{rem:NonHomeomorphic}
Each elementary shelling maintains the homeomorphism class. So, if $M$ shells to $M'$, then $M$ and $M'$ are homeomorphic. In contrast, if a manifold $M$ collapses onto a manifold $M'$, then $M$ and $M'$ need not be homeomorphic: This can be seen by barycentrically subdividing the $5$-manifold in Example \ref{ex:AdiprasitoBenedetti} below. The result is a non-PL $5$-manifold that collapses onto a $5$-ball, without being homeomorphic to it.
\end{rem}

In the following, a \emph{polytopal complex} is a finite non-empty collection $C$ of polytopes in some $\mathbb{R}^k$, such that any two of them intersect at a common face, and every face of any polytope in $C$ is also an element of $C$.

\begin{theorem}[{Whitehead, cf.~Rourke--Sanderson \cite[Theorem 3.26 \& Cor. 3.27]{RourkeSanderson}}]\label{thm:WTHM}
Let $C$ be a subcomplex of a PL triangulated manifold $M$. Suppose that $C$ collapses to some subcomplex $D$. Then a regular neighborhood of $C$ shells to a regular neighborhood of $D$. In case $D$ is a single vertex, this regular neighborhood of $C$ is a PL ball.
\end{theorem}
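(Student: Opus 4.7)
My plan is to induct on the length of the collapse $C \searrow D$. Factor the hypothesis as a sequence of elementary collapses $C = C_0 \searrow C_1 \searrow \cdots \searrow C_k = D$, each step removing a free pair $(\sigma_i, \tau_i)$ so that $C_{i+1} = C_i \setminus \{\sigma_i, \tau_i\}$. Since a composition of shellings is again a shelling, it suffices to prove the following elementary version: whenever $C' = C \setminus \{\sigma, \tau\}$ inside the PL manifold $M$, a regular neighborhood $N(C,M)$ shells down to $N(C',M)$. Throughout I will work with the second derived neighborhood, which is unambiguous up to PL-homeomorphism by Whitehead's uniqueness of regular neighborhoods.

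For the elementary step, the strategy is to exhibit the closure $B$ of $N(C,M) \setminus N(C',M)$ as a PL $d$-ball attached to $N(C',M)$ along a PL $(d-1)$-ball in $\partial B$. Concretely, $B$ is the regional neighborhood in $\sd^2 M$ of the pair $(\sigma,\tau)$ \emph{relative to $C'$}: the freeness of $\sigma$ in $C$ ensures that no other coface of $\sigma$ in $C$ contributes dual pieces that would tear $B$. The PL hypothesis on $M$ is essential here, since it guarantees via the links-are-PL-spheres property that the dual blocks of $\sigma$ and $\tau$ in $\sd^2 M$ assemble into a PL ball of full dimension $d$; similarly, the "floor" $A := B \cap N(C',M)$, built from dual pieces of the faces of $\partial\tau$ that lie in $C'$, is a PL $(d-1)$-ball lying in $\partial B$.

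Once $B$ is identified as a PL $d$-ball attached along the PL $(d-1)$-ball $A \subset \partial B$, I would shell $B$ starting from a facet disjoint from $A$ --- either by a Bruggesser--Mani line shelling, or, after possibly one further barycentric subdivision, by Theorem \ref{thm:subdivision}, which guarantees a shelling of a PL ball compatible with any prescribed boundary ball. Removing the $d$-simplices of $B$ in this order realizes the desired sequence of elementary shellings transforming $N(C,M)$ into $N(C',M)$: each intersection of a removed top simplex with the remaining complex is a shellable ball in its boundary. Iterating along the collapse chain produces the shelling from $N(C,M)$ onto $N(D,M)$. For the second assertion, when $D = \{v\}$, the neighborhood $N(\{v\},M)$ is the closed star of $v$ in $\sd^2 M$, which is a PL ball because $\Lk(v,M)$ is a PL sphere; since shellings reverse to attachments of $d$-balls along boundary balls --- an operation preserving the PL $d$-ball property --- we conclude that $N(C,M)$ is itself a PL ball.

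The main obstacle I expect is rigorously verifying that $B$ is a PL $d$-ball and that $A \subset \partial B$ is a PL $(d-1)$-ball. This is the combinatorial heart of the theorem and requires a careful bookkeeping of which simplices of $\sd^2 M$ belong to $N(C,M)$ but not to $N(C',M)$, together with an application of PL-ball-ness of dual blocks in PL manifolds. Without the PL assumption on $M$, the statement genuinely fails, as the non-PL example alluded to in Remark \ref{rem:NonHomeomorphic} shows; everything else in the argument is standard shellability of PL balls and composition of elementary shellings.
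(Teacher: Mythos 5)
The paper offers no proof of this statement: it is quoted as a classical result of Whitehead and referred to Rourke--Sanderson, so there is no in-text argument to compare yours against. Your skeleton is the standard one from that source: factor the collapse into elementary collapses, and for each free pair $(\sigma,\tau)$ show that the second derived neighborhood of $C$ differs from that of $C'=C\setminus\{\sigma,\tau\}$ by a PL $d$-ball $B$ meeting $N(C',M)$ in a $(d-1)$-ball in $\partial B$; the final assertion then follows because the star of a vertex in a PL manifold is a PL ball and attaching a PL ball to a PL ball along a common face of their boundaries again yields a PL ball. That is the right architecture, and you correctly locate where the PL hypothesis enters and why the statement fails for non-PL $M$.

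Two concrete problems remain in the step where you propose to shell $B$ off simplex by simplex. First, Bruggesser--Mani line shellings apply to boundary complexes of polytopes, not to an arbitrary subcomplex $B$ of $\sd^2 M$, and PL $d$-balls need not be shellable at all (the paper itself stresses this around Lemma \ref{lem:ShellableSubdivision} and Proposition \ref{prp:uglyspheres}); so ``shell $B$ starting from a facet disjoint from $A$'' is not available for free. You must either exploit the special join/cone structure of the derived neighborhood of a single free pair, or --- as in Rourke--Sanderson, where an elementary shelling removes a whole ball meeting the rest in a face --- treat the removal of $B$ as a single elementary shelling rather than a sequence of simplex removals. Second, Theorem \ref{thm:subdivision} does not say what you attribute to it: it upgrades an already-existing shelling between subdivisions to one between iterated barycentric subdivisions; it does not manufacture a shelling of a given PL ball compatible with a prescribed boundary ball, and invoking further subdivisions would in any case change the complex $N(C,M)$ whose shellability is being asserted. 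Finally, the identification of $B$ as a PL $d$-ball with $A=B\cap N(C',M)$ a face of it is, as you concede, the heart of the matter; in a complete write-up it cannot be left as bookkeeping, since it is precisely the content of the regular-neighborhood lemmas in Rourke--Sanderson that feed into their Theorem 3.26 and Corollary 3.27.
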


\begin{theorem}[{Adiprasito--Benedetti \cite[Theorem 5.3]{AB-SSZ}}] \label{thm:subdivision}
Let $C$, $D$ be polytopal complexes. Suppose that some subdivision $C'$ of $C$ shells to some subdivision $D'$ of $D$. Then, for $n$ large enough, $\sd^n C$ shells to $\sd^n D$.
\end{theorem}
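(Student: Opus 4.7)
The plan is to convert the given shelling $C' = C'_0 \searrow C'_1 \searrow \cdots \searrow C'_k = D'$ into a shelling of $\sd^n C$ by first upgrading it via iterated barycentric subdivisions where the combinatorics becomes better controlled, and then transferring the shelling between different subdivisions of the same polytopal complex. A key preliminary observation is that iterated barycentric subdivision behaves well with shelling of polytopal complexes, so that the hypothesis can be upgraded to: for every $m \ge 0$, the complex $\sd^m C'$ shells to $\sd^m D'$ along a sequence whose elementary steps refine those of the original shelling.

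Next I would reduce the theorem to the following transfer lemma. Suppose $X$ and $Y$ are two subdivisions of the same polytopal complex $P$, and $X$ shells to a subdivision $U$ of a subcomplex $Q \subseteq P$; then for $n$ large enough, $\sd^n Y$ shells to the subcomplex of $\sd^n Y$ whose underlying space is $|Q|$. Granting the lemma, one applies it to $X = \sd^m C'$, $Y = C$, $U = \sd^m D'$, $P = C$, $Q = D$ and obtains the conclusion of the theorem. The role of iterated barycentric subdivisions on both sides is precisely to mediate between $C'$ and $C$ as subdivisions of the same polytopal complex without needing either to refine the other.

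The hard part will be proving the transfer lemma, which I would attack by induction on the number of elementary steps in $X \searrow U$. The inductive step removes a single top-dimensional face $\sigma$ of $X$ via a collapse onto a free boundary subcomplex $F \subseteq \partial\sigma$. On the $Y$-side, $|\sigma|$ corresponds to a subcomplex $Y_\sigma \subseteq Y$ which is itself a polytopal subdivision of the cell $\sigma$, while $F$ corresponds to a codimension-zero subcomplex $Y_F \subseteq \partial Y_\sigma$. What is needed is a \emph{relative shellability} statement: after enough barycentric subdivisions, $Y_\sigma$ shells onto $Y_F$. This is the main combinatorial obstruction, and I would establish it by a Bruggesser--Mani-style line shelling adapted to the barycentric setting, where the freedom to iterate $\sd$ guarantees that the mesh of $Y_\sigma$ becomes fine enough to accommodate any choice of generic sweeping direction that reaches $Y_F$ last.

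Finally, I would assemble the $k$ relative shellings into a single shelling of $\sd^n C$ onto $\sd^n D$, taking $n$ to be the maximum of the finitely many thresholds arising at each step of the original shelling. The finiteness of the shelling ensures this maximum exists, completing the proof.
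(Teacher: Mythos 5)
The paper does not actually prove this statement: it is imported verbatim from Adiprasito--Benedetti \cite{AB-SSZ}, so there is no internal proof to compare against. Judged on its own merits, your proposal has a structural flaw and a circularity.

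The structural flaw is in the transfer lemma. You set $X=\sd^m C'$ and $Y=C$, so $X$ \emph{refines} $Y$: a facet $\sigma$ of $X$ is a small cell contained in a single face of $Y$, and $|\sigma|$ is not the underlying space of any subcomplex $Y_\sigma$ of $Y$ --- nor of $\sd^n Y$ for any $n$, since barycentric subdivision only inserts vertices at barycenters, which will not lie on $\partial|\sigma|$ in general. The entire difficulty of the theorem is precisely that $C'$ and $\sd^n C$ are two subdivisions of $|C|$ with no refinement relation in either direction, so a shelling cannot be transported facet-by-facet from one to the other; your induction never gets off the ground.

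The circularity is in the ``relative shellability'' step: the claim that an arbitrary polytopal subdivision $Y_\sigma$ of a cell, after enough derived subdivisions, shells onto a prescribed codimension-zero piece $Y_F$ of its boundary is not something a Bruggesser--Mani line shelling can deliver. Line shellings apply to the face complex of a single polytope, not to arbitrary subdivisions of it; there are subdivisions of the $d$-simplex, $d\ge 3$, that are not shellable at all, and the obstructions (knotted spanning arcs, as behind Proposition \ref{prp:uglyspheres}) are topological rather than metric, so no fineness of mesh or genericity of sweeping direction removes them. That relative statement is essentially the full strength of the results of \cite{AB-SSZ} (it is how Lemma \ref{lem:ShellableSubdivision} is obtained there), so you have reduced the theorem to a lemma at least as hard as the theorem. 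Even your preliminary step, that $C'$ shells to $D'$ implies $\sd^m C'$ shells to $\sd^m D'$, is a nontrivial result of \cite{AB-SSZ} rather than a routine observation, although it is true.
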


By merging the latter two theorems, we obtain the following consequence:
\enlargethispage{3mm}

\begin{cor}\label{cor:shellbary}
Let $C$ be a subcomplex of a PL manifold $M$. Suppose $C$ collapses to some subcomplex $D$. Then for $n$ and  $m$ large enough, $\sd^n N^m(C,M)$ shells to $\sd^n N^m(D,M)$.
\end{cor}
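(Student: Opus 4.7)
The plan is to combine the two PL-topological tools at our disposal: Whitehead's shelling result for regular neighborhoods (Theorem~\ref{thm:WTHM}) and Adiprasito--Benedetti's ``shellings transfer to barycentric subdivisions'' theorem (Theorem~\ref{thm:subdivision}). Whitehead gives shellability for \emph{some} regular neighborhoods of $C$ and of $D$, whereas we want it for the specific regular neighborhoods $N^m(C,M)$ and $N^m(D,M)$; the gap is bridged by absorbing the change of triangulation into iterated barycentric subdivision.

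In detail, applying Theorem~\ref{thm:WTHM} to the collapse $C \searrow D$ produces regular neighborhoods $N_D \subset N_C$ of $D$ and $C$ in $M$ such that $N_C$ shells to $N_D$. Fix any integer $m \ge 2$; then $N^m(C,M)$ and $N^m(D,M) \subset N^m(C,M)$ are also regular neighborhoods of $C$ and $D$ in $M$. By the uniqueness part of Whitehead's Regular Neighborhood Theorem, the pairs $(N_C, N_D)$ and $(N^m(C,M), N^m(D,M))$ are PL homeomorphic, so they admit combinatorially equivalent common subdivisions (as pairs). Since shellability depends only on the face poset, the shelling of $N_C$ onto $N_D$ transfers across this combinatorial isomorphism to a shelling of some subdivision of $N^m(C,M)$ onto the induced subdivision of $N^m(D,M)$.

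With this input in hand, apply Theorem~\ref{thm:subdivision} to the pair $N^m(D,M) \subset N^m(C,M)$: for $n$ large enough, $\sd^n N^m(C,M)$ itself shells to $\sd^n N^m(D,M)$, which is the desired conclusion.

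The main obstacle is the middle step: ensuring that Whitehead's uniqueness statement yields a PL homeomorphism \emph{of pairs} (not merely of the ambient regular neighborhoods individually), so that the shelling ``into $N_D$'' really translates to a shelling ``into $N^m(D,M)$'' rather than into some other, differently-positioned regular neighborhood of $D$. This compatibility is standard in the Rourke--Sanderson treatment of regular neighborhoods, but without it one would end up comparing neighborhoods that are PL homeomorphic only ambiently, which is not enough to run Theorem~\ref{thm:subdivision}. Once this is secured, both cited theorems can be invoked mechanically.
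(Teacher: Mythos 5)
Your overall strategy is exactly the paper's: the paper disposes of this corollary with the single sentence ``by merging the latter two theorems,'' i.e.\ Theorem~\ref{thm:WTHM} and Theorem~\ref{thm:subdivision}, which is precisely what you attempt. The problem lies in your middle step, and it is a genuine gap rather than a routine detail. A PL homeomorphism of pairs $(N_C,N_D)\cong(N^m(C,M),N^m(D,M))$ only furnishes a combinatorial isomorphism between some \emph{subdivision} $N_C'$ of $N_C$ and some subdivision $P'$ of $N^m(C,M)$ (carrying $N_D'$ to $Q'$). The shelling you possess lives on $N_C$ itself, not on $N_C'$, and a shelling of a complex does not induce a shelling of its subdivisions --- if it did, Theorem~\ref{thm:subdivision} would be content-free, since its entire point is that shellings survive passage to subdivisions only for iterated barycentric ones and only for $n$ large. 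Hence the sentence ``the shelling of $N_C$ onto $N_D$ transfers across this combinatorial isomorphism to a shelling of some subdivision of $N^m(C,M)$'' does not follow, and the hypothesis you need in order to apply Theorem~\ref{thm:subdivision} to the pair $(N^m(C,M),N^m(D,M))$ --- that \emph{some} subdivision of the first shells to some subdivision of the second --- is never actually verified. The same objection shows that the relation ``some subdivision of $X$ shells to some subdivision of $Y$'' is not obviously invariant under PL homeomorphism of pairs, so sharpening the uniqueness statement for regular neighborhoods (the worry you flag) would still not close the argument.

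The way to close it is to use the \emph{form} of Whitehead's theorem rather than its bare statement, bypassing regular-neighborhood uniqueness entirely. In the Rourke--Sanderson treatment, the regular neighborhoods realizing the shelling are simplicial neighborhoods $N(C,M^{(k)})$ and $N(D,M^{(k)})$ in an iterated \emph{derived} subdivision $M^{(k)}$ of $M$, with interior points chosen to put the elementary collapses in standard position. Since the face poset of a derived subdivision is independent of the choice of interior points, $M^{(k)}$ is combinatorially equivalent to $\sd^k M$ by an isomorphism carrying $N(C,M^{(k)})$ to $N^k(C,M)$ and $N(D,M^{(k)})$ to $N^k(D,M)$; because shelling is a face-poset notion, this produces an honest shelling of $N^k(C,M)$ onto $N^k(D,M)$ for that particular $k$. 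Only at that point does Theorem~\ref{thm:subdivision} enter, to convert this single instance into the statement for all large $n$ (and, with a further observation comparing $N^m$ for varying $m\ge 2$, for all large $m$). As written, your proposal invokes the two cited theorems in the right order but with the combinatorial input to the second one missing.
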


Next, we recall a well-known fact in PL topology. Given a simplicial complex $C$, a subcomplex $D$ of $C$ of is called {\em full} if any face $\sigma$ of $C$ whose vertices are all in $D$ belongs to $D$ as well. For example, for any complex $C$ and for any subcomplex $D$ of $C$, the barycentric subdivision $\sd \, D$ is always full in $\sd\, C$ \cite[Sec.\ 3]{ZeemanBK}.

\begin{lemma}\label{prp:NewC} Let $k < d$ be non-negative integers. 
Let $C$ be a $k$-dimensional full subcomplex of a PL $d$-manifold $M$. Let $D$ be the subcomplex of $C$ obtained by removing from $C$ a  $k$-cell $\sigma$. Then $N^2(C,M)$ is obtained from $N^2(D,M)$ by attaching a $k$-handle.
\end{lemma}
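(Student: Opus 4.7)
The plan is to take the candidate $k$-handle to be $H := N^2(\sigma, M)$ itself, and verify the three defining properties of a PL handle attachment: that $N^2(C,M) = N^2(D,M) \cup H$, that $H$ is a PL $d$-ball, and that the relevant portion of $\partial H$ is PL-homeomorphic to $S^{k-1} \times \mathbb{I}^{d-k}$.

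First I would verify the decomposition $N^2(C,M) = N^2(D,M) \cup H$: since $C = D \cup \sigma$ one has $\sd^2 C = \sd^2 D \cup \sd^2 \sigma$, and a cell of $\sd^2 M$ meets $\sd^2 C$ iff it meets one of the two summands, so the equality is immediate from the definition of derived neighborhood. Next, since $\sigma$ is a simplex it collapses to a vertex; Theorem~\ref{thm:WTHM} then gives that $H$ shells to a regular neighborhood of a vertex, which is a PL $d$-ball.

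The core of the argument is identifying the attaching region. Because $M$ is PL, $\Lk(\sigma, M)$ is a PL $(d-k-1)$-sphere, and relative PL regular neighborhood theory yields a product structure
\[ H \; \cong \; \sigma \times B^{d-k}, \]
under which $\partial H$ splits into a lateral face $\partial \sigma \times B^{d-k} \cong S^{k-1} \times \mathbb{I}^{d-k}$ and a free face $\sigma \times \partial B^{d-k} \cong \mathbb{I}^k \times S^{d-k-1}$. The lateral face is exactly what a $k$-handle attachment requires, so it remains to identify it with $H \cap N^2(D,M)$. Here I would use that the barycentric subdivisions $\sd^2 D$ and $\sd^2 \sigma$ are automatically full subcomplexes of $\sd^2 M$ and meet in $\sd^2(D \cap \sigma) = \sd^2 \partial\sigma$; the standard closed-star computation for full subcomplexes then yields
\[ N^2(D,M) \cap H \; = \; N^2(\partial\sigma, M) \cap H, \]
which under the product structure is the promised lateral face. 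The fullness hypothesis on $C$ enters to guarantee that $N^2(D,M)$ is itself a ``clean'' regular neighborhood of $D$, with no parasitic faces of $M$ outside $C$ straddling $D$ and $\sigma$ and distorting the product picture; combined with the above, this aligns $\partial N^2(D,M)$ with the lateral face of $\partial H$. Finally, the combinatorial equivalence required by the handle definition is automatic, since the attaching region is literally the same subcomplex of $\sd^2 M$ viewed once in $\partial H$ and once in $\partial N^2(D,M)$.

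The main obstacle I foresee is purely bookkeeping: the overlap $H \cap N^2(D,M)$ is a priori $d$-dimensional, and one must verify that for the purpose of handle attachment it can be ``collapsed'' onto its lateral-face intersection with $\partial H$ without changing the PL-homeomorphism type of $N^2(C,M)$. The product structure on $H$ supplies the collar needed to formalize this, but one has to be careful that two derived subdivisions genuinely suffice to realize the relative product structure, the fullness-based intersection identification, and the thin collar simultaneously.
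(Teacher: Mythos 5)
The paper does not actually prove this lemma --- it is stated as a recalled classical fact (the relevant source being the construction of handle decompositions from triangulations in Rourke--Sanderson) --- so your proposal can only be measured against that standard argument. Your overall picture is the right one, and several steps are fine: the decomposition $N^2(C,M)=N^2(D,M)\cup N^2(\sigma,M)$ is immediate, and $N^2(\sigma,M)$ is a PL $d$-ball by collapsibility of $\bar\sigma$ together with Theorem \ref{thm:WTHM}. The identification $N^2(D,M)\cap N^2(\sigma,M)=N^2(\partial\sigma,M)\cap N^2(\sigma,M)$ is also correct, and is indeed where fullness enters (via the fact that $\sd D$ and $\sd \sigma$ are full and meet exactly in $\sd\,\partial\sigma$), though it deserves the chain-of-barycenters argument rather than a wave at ``the standard closed-star computation''.

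The genuine gap is in the identification of the attaching region, and you have only half-noticed it. With $H:=N^2(\sigma,M)$, the overlap $H\cap N^2(D,M)$ is a full-dimensional chunk of $H$ surrounding $\partial\sigma$; it is \emph{not} contained in $\partial H$, so the assertion that it ``is the promised lateral face $\partial\sigma\times B^{d-k}$'' is false as written --- a $d$-dimensional set cannot equal a $(d-1)$-manifold. Consequently $N^2(C,M)=N^2(D,M)\cup H$ is not a handle attachment in the sense of the paper's definition, which requires the two pieces to meet in $(d-1)$-submanifolds of their respective boundaries. The standard repair is to take the handle to be $H_0:=\mathrm{cl}\bigl(N^2(C,M)\setminus N^2(D,M)\bigr)$, i.e.\ the union of closed simplices of $\sd^2 M$ meeting $\sd^2\sigma$ but missing $\sd^2 D$, and to establish the product structure $H_0\cong\sigma\times B^{d-k}$ with $H_0\cap N^2(D,M)\cong\partial\sigma\times B^{d-k}$ directly; this is exactly the handle $H(A)$ of Rourke--Sanderson's Chapter 6. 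Your proposed fix --- ``collapsing'' the $d$-dimensional overlap onto its lateral face using a collar --- is not mere bookkeeping: showing that $H_0$ is a ball and that its frontier in $N^2(D,M)$ is the lateral face of a product structure is precisely the content of the lemma, and the abstract product structure you place on $N^2(\sigma,M)$ does not restrict to one on $H_0$ without further argument.
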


We are now ready to prove the main theorem of this section.

\begin{proof}[{\bf Proof of Theorem \ref{thm:dmttopl}}]
We proceed by induction on the number $|\mathbf{c}|$ of critical cells of the discrete Morse function $f$ on $M$. 

\begin{compactdesc}
\item[$|\mathbf{c}|=1$] If $M$ has only one critical face, then this face is a single vertex. By Whitehead's Theorem \ref{thm:WTHM}, $M$ is a PL ball. 
\item[$|\mathbf{c}|>1$] Let us assume the claim holds for PL manifolds that admit a discrete Morse function with less than $k$ critical faces. Let $M$ be a PL triangulated manifold with exactly $k$ critical faces. Let $\tau$ be the critical face with the highest value under $f$, and let $t = \dim \tau$ be its dimension.
Let $\varTheta$ denote the subcomplex of $M$ of faces with a value lower than $f(\tau)$ under $f$. By Corollary \ref{cor:shellbary} and Lemma \ref{prp:NewC}, for large $n$ and $m$, the complex $\sd^n N^m(M,M)$ can be decomposed into the complex $\sd^n N^m(\varTheta,M)$ with a PL $t$-handle attached. By Lemma \ref{lem:pusd}, $\sd^n N^m(\varTheta,M)$ admits a discrete Morse function with $c_i$ critical cells of dimension $i\neq t$, and $c_t-1$ critical cells of dimension $t$. By the inductive assumption, $\sd^n N^m(\varTheta,M)$ admits the desired PL handle decomposition, and so does $M$.
\end{compactdesc}
\end{proof}

\begin{cor} \label{PLtoPLhandle}
Every PL manifold admits a PL handle decomposition (possibly after some iterated barycentric subdivisions). 
\end{cor}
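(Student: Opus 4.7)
The plan is to derive this corollary as a nearly immediate consequence of Theorem \ref{thm:dmttopl}, since by definition every PL manifold already admits a PL triangulation $T$. The only thing missing is a discrete Morse function to feed into that theorem, but the crudest possible one will do.

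First, I would observe that any simplicial complex $C$ carries a \emph{trivial} discrete Morse function in which every face is critical. Concretely, fix an enumeration $\sigma_1,\sigma_2,\ldots$ of the faces of $C$ that refines the inclusion order, and set
\[
f(\sigma_j) \;=\; \dim \sigma_j \;+\; \frac{j}{N}
\]
for $N$ chosen so large that $f(\sigma) < f(\tau)$ whenever $\sigma \subsetneq \tau$ (so the dimension term dominates the perturbation across inclusions). Then $f$ is monotone and injective, so the semi-injectivity and genericity axioms hold vacuously, and every face $\sigma$ is a critical face of dimension $\dim \sigma$.

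Applying this to our given PL triangulation $T$ of the PL manifold $M$, with $c_i$ equal to the number of $i$-faces of $T$, Theorem \ref{thm:dmttopl} yields an integer $r$ such that $\sd^r T$ admits a PL handle decomposition with $c_i$ handles of index $i$ for each $i$. Since $\sd^r T$ is a PL triangulation of $M$, this proves the corollary. The only nontrivial content is hidden in Theorem \ref{thm:dmttopl}; the corollary itself is just the observation that the existence hypothesis of that theorem is automatic, since every simplicial complex has at least one (tautological) discrete Morse function. No step here should be subtle, since all the real work — regular neighborhoods, shellings after subdivision, the inductive handle attachment — is already done.
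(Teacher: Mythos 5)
Your proposal is correct and is essentially the paper's own proof: the paper likewise just notes that every simplicial complex admits a discrete Morse function (possibly with every face critical) and then invokes Theorem \ref{thm:dmttopl}. Your explicit construction of the tautological discrete Morse function is a harmless elaboration of the same one-line observation.
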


\begin{proof}
Any simplicial complex admits a discrete Morse function, possibly with many critical faces. If the complex is a PL triangulation, we can conclude via Theorem \ref{thm:dmttopl}.
\end{proof}

\subsection{Intermezzo: Duality for Morse functions}
The Betti numbers of any \emph{closed} manifold\footnote{In the present paper, for simplicity, all manifolds are assumed to be orientable. For non-orientable manifolds, Poincar\'e duality holds only if one takes homology and cohomology with coefficients in the integers mod $2$. For further details, see Hatcher \cite[Sections 3.3 \& 3.H]{Hatcher}.} are palindromic, i.e. 
\[ (\beta_0, \beta_1, \ldots, \beta_{d-1}, \beta_{d}) 
\: = \:(\beta_d, \beta_{d-1}, \ldots, \beta_{1}, \beta_{0}). \]
This is a consequence of Poincar\'e duality, a well-known property of closed manifolds. In the language of Morse theory, the property can be stated as follows: Suppose a smooth, closed manifold $M$ admits a generic smooth function $f$, with $c_i$ critical points of index $i$. Then $-f$ is also a generic smooth Morse function on $M$, and it has $c_{d-i}$ critical points of index $i$. In fact, each index-$i$ critical point of $f$ is also a critical point for $-f$, with index $d-i$.

\begin{cor} \label{cor:SmoothDuality} Let $M$ be a closed topological manifold.
\begin{compactenum}[ \rm (1) ]
\item If $(c_0, c_1, \ldots, c_{d-1}, c_{d})$ is a Morse vector on $M$, so is 
$(c_d, c_{d-1}, \ldots, c_{1}, c_{0})$ (with respect to the same smooth structure). 
\item If $(c_0, c_1, \ldots, c_{d-1}, c_{d})$ is a PL handle vector on $M$, so is 
$(c_d, c_{d-1}, \ldots, c_{1}, c_{0})$ (with respect to the same PL structure).
\end{compactenum}
\end{cor}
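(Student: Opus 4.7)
For part (1), the plan is to use the standard smooth duality $f \mapsto -f$. Given a generic smooth $f : M \to \R$ with $c_i$ critical points of index $i$, the function $g := -f$ is again smooth and generic and has exactly the same critical set (since $\nabla g = -\nabla f$). At each critical point $p$, the Hessian of $g$ is the negative of that of $f$, so by nondegeneracy the number of negative eigenvalues of $\operatorname{Hess}_p g$ equals $d - \operatorname{ind}_p f$. Hence $g$ realizes the Morse vector $(c_d, c_{d-1}, \ldots, c_1, c_0)$.

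For part (2), I would construct the \emph{dual PL handle decomposition}. Given $M = H_0^{(0)} \cup H_1^{(i_1)} \cup \ldots \cup H_k^{(i_k)}$, the plan has three steps. First, I would rearrange the handles so they are attached in non-decreasing index order --- a standard PL move, since a handle of index $i$ and a later handle of index $j > i$ can always be commuted without changing the homeomorphism type. Since $M$ is closed, the top handle is then forced to be a $d$-handle, i.e.\ a single PL $d$-ball. Second, I would reverse the sequence, setting $\widetilde{H}_j := H_{k-j}$, so that the PL $d$-ball $H_k$ serves as the new $0$-handle. Third, I would verify that each PL $i$-handle $H^{(i)}$, which is PL homeomorphic to $\mathbb{I}^i \times \mathbb{I}^{d-i}$, appears in the reversed decomposition as a $(d-i)$-handle: its boundary splits as a \emph{sole} $S^{i-1} \times \mathbb{I}^{d-i}$ (the original attaching region) and a \emph{co-sole} $\mathbb{I}^i \times S^{d-i-1}$, and in the reversed process the co-sole, rewritten as $S^{d-i-1} \times \mathbb{I}^i$, plays the role of the new attaching region. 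Counting by index then turns $(c_0, \ldots, c_d)$ into $(c_d, \ldots, c_0)$.

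The main obstacle will be verifying that this reversal is a bona fide PL handle decomposition: at each stage the ``new'' handle $\widetilde{H}_j$ must be attached via a PL homeomorphism of its sole onto a PL $(d-1)$-submanifold of $\partial(\widetilde{H}_0 \cup \ldots \cup \widetilde{H}_{j-1})$, and one must check compatibility of the attaching maps. I would argue this by induction, keeping track of how the boundary of the partial union $H_0 \cup \ldots \cup H_{k-j}$ decomposes into the soles of $H_{k-j+1}, \ldots, H_k$ (where later handles are still to be attached) and the visible co-soles of the earlier handles (exactly the regions that will serve as the new soles in the reversed order). This is the classical PL duality between handle decompositions of a closed manifold, cf.\ \cite{RourkeSanderson}; no tools beyond those already collected in Section~1 are required to make it rigorous.
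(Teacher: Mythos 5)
Your proposal is correct and follows essentially the same route as the paper: part (1) is exactly the $f\mapsto -f$ observation stated in the paragraph preceding the corollary, and part (2) spells out the classical ``turn the handle decomposition upside down'' duality that the paper leaves implicit with a pointer to \cite{RourkeSanderson}. One small slip worth noting: your commutation condition is stated backwards (adjacent handles $H^{(i)}$ followed by $H^{(j)}$ can be swapped when $j\le i$, by general position of the attaching sphere of $H^{(j)}$ against the belt sphere of $H^{(i)}$), but this reordering step is in fact dispensable --- the reversal works for an arbitrary decomposition, and the last handle of a closed manifold is automatically a $d$-handle because its attaching region must be all of $\partial H^{(i_k)}\cong S^{d-1}$, which forces $i_k=d$.
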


There is also a version of Poincar\'e duality for triangulated manifolds, due to Forman. It involves the notion of \emph{dual block complex} $T^*$ of a triangulation $T$, for which we refer the reader to Munkres' book \cite[p.~377]{Munkres}. The ``blocks'' forming the complex are PL-homeomorphic to the stars of the faces of $T$, so they might not be homeomorphic to balls if $T$ is not PL. However, if $T$ is PL, then the dual blocks are balls, and $T^*$ is a regular CW-complex. In particular, the barycentric subdivision of $T^*$ is a simplicial complex, combinatorially equivalent to the barycentric subdivision of $T$.

\begin{prp}[Forman] \label{prp:dualityForman}
If a triangulated, closed manifold $M$ (not necessarily PL) admits a discrete Morse function $f$ with $c_i$ critical $i$-faces, then the dual block complex $M^*$ admits some discrete Morse function with $c_{d-i}$ critical $i$-faces. (For example, take on $M^*$ the function 
\[\sigma^* \; \longmapsto \; - f (\sigma),\]
that assigns the value $- f (\sigma)$ to the dual block of each face $\sigma$. With slight abuse of notation, this function is usually denoted by $-f$.) 
\end{prp}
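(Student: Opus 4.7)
The plan is to exploit the fact that Forman's axioms depend only on the face poset, and that passage to the dual block complex reverses this poset while swapping dimensions $i \leftrightarrow d-i$. Concretely, the duality $\sigma \leftrightarrow \sigma^*$ is a bijection between the $i$-faces of $M$ and the $(d-i)$-dimensional blocks of $M^*$, and it satisfies
\[ \sigma \subseteq \tau \text{ in } M \quad \Longleftrightarrow \quad \tau^* \subseteq \sigma^* \text{ in } M^*. \]
This is a general fact about dual block decompositions of (not necessarily PL) closed manifolds, recorded in Munkres \cite{Munkres}, and is the only structural input I need.

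Once the poset-reversing bijection is in hand, I would define $g : M^* \rightarrow \mathbb{Q}$ by $g(\sigma^*) = -f(\sigma)$ and verify the three Forman axioms directly. For monotonicity, if $\sigma^* \subseteq \tau^*$ then $\tau \subseteq \sigma$, so $f(\tau) \le f(\sigma)$, hence $g(\sigma^*) = -f(\sigma) \le -f(\tau) = g(\tau^*)$. Semi-injectivity is immediate since $|g^{-1}(q)| = |f^{-1}(-q)| \le 2$. For genericity, $g(\sigma^*) = g(\tau^*)$ forces $f(\sigma) = f(\tau)$, which by Forman's genericity for $f$ gives $\sigma \subseteq \tau$ or $\tau \subseteq \sigma$; reversing inclusions yields $\tau^* \subseteq \sigma^*$ or $\sigma^* \subseteq \tau^*$.

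To count critical cells, recall that a face is critical iff the function is injective on it (equivalently, it is not paired with any face in the Morse matching). Since $g$ is injective at $\sigma^*$ iff $f$ is injective at $\sigma$, the critical blocks of $g$ are precisely $\{\sigma^* : \sigma \text{ critical for } f\}$. The dimension shift $\dim \sigma^* = d - \dim \sigma$ then turns the $c_i$ critical $i$-faces of $f$ into $c_i$ critical $(d-i)$-blocks of $g$, i.e.\ $c_{d-i}$ critical $i$-blocks, as claimed.

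The only delicate point I expect is the status of $M^*$ when $M$ is not PL: the dual blocks need not be topological balls, so $M^*$ is not literally a regular CW complex. However, Forman's definition really operates on the face poset together with a labeling, not on the topology of cells; the proof above uses only the combinatorial duality of posets and the definition of ``critical face'' in terms of injectivity of $f$. Thus the argument goes through for an arbitrary triangulated closed manifold, and when $M$ is PL one can, if desired, transport the result to the simplicial complex $\sd(M^*) \cong \sd(M)$ without changing the number or dimension of the critical cells.
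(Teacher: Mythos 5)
Your proof is correct and follows exactly the route the paper itself indicates: the paper's entire justification is the parenthetical suggestion to take $-f$ on the dual block complex, and you have simply verified the three axioms and the critical-cell count against the order-reversing, dimension-complementing bijection $\sigma \leftrightarrow \sigma^*$. Your closing remark that the argument is purely poset-theoretic (so the non-PL case, where dual blocks need not be cells, causes no trouble) is also the right observation and matches the paper's caveat about $M^*$ failing to be a regular CW-complex when $M$ is not PL.
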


\begin{cor} \label{cor:dualityForman} Let $M$ be a closed PL manifold. If $(c_0, c_1, \ldots, c_{d-1}, c_{d})$ is a discrete Morse vector on some PL triangulation of $M$, then so is 
$(c_d, c_{d-1}, \ldots, c_{1}, c_{0})$.
\end{cor}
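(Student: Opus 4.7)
The plan is to combine Forman's duality for block complexes (Proposition \ref{prp:dualityForman}) with the fact, already invoked in the proof of Lemma \ref{lem:pusd}, that barycentric subdivision preserves the discrete Morse vector. So let $T$ be a PL triangulation of $M$ carrying a discrete Morse function $f$ with $c_i$ critical $i$-faces. First I would form the dual block complex $T^{*}$. Because $T$ is PL, each dual block is a PL ball and $T^{*}$ is a regular CW complex, so Proposition \ref{prp:dualityForman} applies and gives a discrete Morse function $-f$ on $T^{*}$ with $c_{d-i}$ critical $i$-blocks; equivalently, $T^{*}$ has discrete Morse vector $(c_d, c_{d-1}, \ldots, c_1, c_0)$.

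The issue is that the corollary demands a discrete Morse function on a PL \emph{simplicial} triangulation of $M$, while $T^{*}$ is only a regular CW complex. To cure this, I would pass to the barycentric subdivision. The standard identification of derived subdivisions of $T$ and of $T^{*}$ identifies $\sd T^{*}$ with $\sd T$ as simplicial complexes (both have one vertex per face of $T$, and maximal simplices come from the order-complex of the face poset of $T$). Since $T$ is PL, $\sd T$ is a PL simplicial triangulation of $M$.

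It remains to promote the discrete Morse function $-f$ on $T^{*}$ to a discrete Morse function on $\sd T^{*} = \sd T$ with the same number of critical cells in each dimension. This is precisely the subdivision-invariance statement used implicitly in the proof of Lemma \ref{lem:pusd} (``since $C$ admits a discrete Morse function with $c_i$ critical $i$-faces, so does the barycentric subdivision of $C$''). Concretely, the acyclic matching associated to $-f$ on $T^{*}$ can be extended to an acyclic matching on $\sd T^{*}$ by matching, inside each subdivided block and each subdivided matched pair, all the newly created faces along a local collapse onto a single vertex (for the critical blocks) or onto the image of the matched pair (for the non-critical blocks), leaving unmatched exactly one face per critical block of $-f$ in the corresponding dimension.

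The main obstacle, and the only really non-trivial point, is checking that this dimension-by-dimension local extension yields a \emph{globally} acyclic matching on $\sd T^{*}$; the rest is bookkeeping. This acyclicity is by now a well-established part of discrete Morse theory, and it is the same fact that is used without comment in Lemma \ref{lem:pusd}. Granting it, we have constructed a PL triangulation $\sd T$ of $M$ carrying a discrete Morse function with vector $(c_d, c_{d-1}, \ldots, c_1, c_0)$, which is exactly the conclusion of the corollary.
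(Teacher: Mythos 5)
Your proposal is correct and follows essentially the same route as the paper: apply Proposition \ref{prp:dualityForman} to get the reversed vector on the dual block complex $M^*$, then transfer it to the simplicial complex $\sd M^* = \sd M$, which is again a PL triangulation of $M$. The paper states this in two lines, leaving the subdivision-invariance of the discrete Morse vector implicit exactly as in Lemma \ref{lem:pusd}, so your additional discussion of extending the acyclic matching is just a more explicit account of the same step.
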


\begin{proof}
By Proposition \ref{prp:dualityForman}, $(c_d, c_{d-1}, \ldots, c_{1}, c_{0})$ is a discrete Morse vector on $M^*$, and thus also on $\sd M^* = \sd M$, which is also PL.
\end{proof}

Corollary \ref{cor:SmoothDuality}, Proposition \ref{prp:dualityForman} and Corollary \ref{cor:dualityForman} do not extend to manifolds with boundary. For example, $(1, 0, \ldots, 0)$ is the handle vector of the $d$-ball, but read from right to left this is not a valid handle vector, because it starts with a $0$. To extend the idea of duality to manifolds with boundary, one usually uses the notion of (smooth or PL) handle decomposition of a \emph{cobordism}: For details, see Rourke--Sanderson \cite{RourkeSanderson} or Sharko \cite{Sharko}.

In discrete Morse theory, there is a rather direct way to extend duality to manifolds with boundary. This was introduced by the author in \cite{B-DMT4MWB}, to which we refer for the definition of dual block complex $M^*$ in case $\partial M \ne \emptyset$. (Caveat: If the boundary is non-empty, $(M^*)^*$ does not coincide with $M$, but it is basically a collar of $M$.)

\begin{definition}[Boundary--critical, interior Morse vector] A discrete Morse function on a manifold with (possibly empty) boundary is called \emph{boundary-critical} if all the boundary faces are critical. A boundary-critical discrete Morse function $f$ has \emph{interior Morse vector} $(c_0, c_1, \ldots, c_{d-1}, c_{d})$, if $f$ has exactly $c_i$ critical faces in the interior of the manifold.
\end{definition}

\begin{prp} [Benedetti \cite{B-DMT4MWB}]\label{prp:dualityMe}
If a PL triangulated manifold $M$ admits a discrete Morse function $f$ with $c_i$ critical $i$-faces and no critical boundary faces, then the dual block complex $M^*$ admits a boundary-critical discrete Morse function with $c_{d-i}$ critical interior $i$-faces. 
Conversely, if $M$ admits a boundary-critical discrete Morse function $f$, with $c_i$ critical interior $i$-faces, then $M^*$ admits a discrete Morse function with $c_{d-i}$ critical $i$-faces. 
\end{prp}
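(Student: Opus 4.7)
My plan is to mirror Forman's proof of Proposition \ref{prp:dualityForman}, carrying over the matching formulation of discrete Morse theory to manifolds with boundary by means of the dual block complex $M^*$ constructed in \cite{B-DMT4MWB}. First I would recall the standard fact that specifying a discrete Morse function on $M$ is equivalent, up to reindexing the values, to specifying an acyclic matching $\mu$ on the Hasse diagram of the face poset: matched pairs are codimension-one inclusions $\sigma \subset \tau$, and the critical faces of $f$ are exactly the unmatched ones. Under this dictionary, the hypothesis that $f$ has no critical boundary face says precisely that every face in $\partial M$ is matched by $\mu$, and dually ``$g$ is boundary-critical'' means no face in $\partial M^*$ is matched.

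The key structural input is the combinatorial description of $M^*$ from \cite{B-DMT4MWB} when $\partial M \ne \emptyset$. Each face $\sigma$ of $M$ determines a dual block $\sigma^*$; the poset of interior blocks of $M^*$ is the order-reversal of the poset of interior faces of $M$ with dimensions transported via $i \mapsto d-i$; and the dual blocks corresponding to boundary faces of $M$ sit inside $M^*$ in the configuration that makes relative Poincar\'e--Lefschetz duality work combinatorially. This configuration is precisely what is responsible for the collar relation $(M^*)^* \simeq \mathrm{collar}(M)$ noted in the excerpt.

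With this dictionary at hand, I would define the dual matching $\mu^*$ on $M^*$ block by block. An interior-interior pair $(\sigma,\tau)$ of $\mu$ produces the reversed pair $(\tau^*,\sigma^*)$ of interior blocks of $M^*$, again of consecutive dimensions. Pairs of $\mu$ in which at least one member is a boundary face are instead absorbed into suitable \emph{interior} pairs of $M^*$ using the combinatorial symmetry just described; the point of the construction of $M^*$ is that this absorption leaves every boundary block of $M^*$ unmatched. I would then verify acyclicity of $\mu^*$ by pulling any alleged cycle back through the order-reversing correspondence to a cycle for $\mu$, and extract from $\mu^*$ the desired boundary-critical discrete Morse function $g$. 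By construction the interior critical $i$-faces of $g$ correspond bijectively to critical $(d-i)$-faces of $f$, giving the count $c_{d-i}$ and completing the forward direction; the converse follows by applying the same argument to $g$ in place of $f$.

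The main obstacle will be step three, the absorption of matched pairs involving boundary faces of $M$ into interior pairs of $M^*$. This is the step at which the usual closed-manifold Forman duality (Proposition \ref{prp:dualityForman}) breaks down, and where the specific features of Benedetti's dual-block construction for manifolds with boundary are essential: one has to verify that the extra cellular structure of $M^*$ beyond $M$, which is responsible for the collar behavior of $(M^*)^*$, fits together with $\mu$ so as to yield both acyclicity of $\mu^*$ and the absence of spurious pairings along $\partial M^*$.
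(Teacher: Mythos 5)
First, a point of reference: the paper does not prove Proposition \ref{prp:dualityMe} at all --- it is imported from \cite{B-DMT4MWB} --- so there is no in-paper argument to compare your proposal against. Judged on its own, your plan has the right skeleton (acyclic matchings, the order-reversing, dimension-flipping bijection between faces of $M$ and blocks of $M^*$), but it rests on a misreading of the construction and leaves the only hard step unproved. In the dual block complex of \cite{B-DMT4MWB} for $\partial M \ne \emptyset$, only the \emph{interior} faces of $M$ contribute blocks; the boundary faces of $M$ have no duals in $M^*$, which is precisely why $(M^*)^*$ is $M$ shrunk by a collar. So your step in which ``the dual blocks corresponding to boundary faces of $M$ sit inside $M^*$'' and matched pairs involving boundary faces are ``absorbed into suitable interior pairs of $M^*$'' does not correspond to anything in the construction: those pairs simply have no image, and what must be controlled instead is the fate of their \emph{interior} partners.

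Second, the wholesale dualization of the matching that your plan reduces to genuinely fails. Call an interior face $\sigma$ of $M$ \emph{peripheral} if $\overline{\sigma}$ meets $\partial M$ (so that $\sigma^*$ is a block of $\partial M^*$) and \emph{deep} otherwise (so that $\sigma^*$ is an interior block). A matched pair $(\sigma,\tau)$ of $f$ with $\sigma$ deep and $\tau$ peripheral is perfectly possible, and its dual pair $(\tau^*,\sigma^*)$ matches a boundary block of $M^*$ --- exactly what a boundary-critical function must not do. Likewise, a critical face of $f$ that happens to be peripheral dualizes to a boundary block, so it drops out of the count of critical \emph{interior} $i$-blocks and the asserted equality with $c_{d-i}$ breaks. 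Rerouting the matching near $\partial M^*$ without creating cycles and without changing the interior count is the entire content of the proposition; the argument in \cite{B-DMT4MWB} does this by working with the collapse formulation (peeling off critical faces in decreasing order of $f$-value and dualizing each elementary collapse, interior free pairs of $M$ becoming free pairs of $M^*$ read in the opposite direction, boundary free pairs contributing nothing), rather than by dualizing the matching in one stroke. As written, your step three is not a proof strategy but a restatement of the claim, and the obstacle you correctly flag at the end is not overcome.
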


\begin{definition}[Endocollapsible]
Let $M$ be a triangulated $d$-manifold with non-empty boundary. $M$ is called \emph{endocollapsible} if $M$ minus a facet collapses onto the boundary of $M$.
\end{definition}

It is easy to see that all shellable balls are endocollapsible; the converse is false, cf. \cite{B-DMT4MWB}. 

\begin{cor} \label{cor:dualityMe} If $(c_0, c_1, \ldots, c_{d-1}, c_{d})$ is a discrete Morse vector (resp. an interior discrete Morse vector) on some PL triangulated manifold $M$, then 
$(c_d, c_{d-1}, \ldots, c_{1}, c_{0})$ is an interior discrete Morse vector (resp. a discrete Morse vector) on some other PL triangulation. One can prescribe the latter triangulation to be an iterated barycentric subdivision of $M$.
\end{cor}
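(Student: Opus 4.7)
The plan is a direct application of Proposition \ref{prp:dualityMe}, combined with a standard lift from the dual block complex $M^*$ (a regular CW complex) to its simplicial barycentric subdivision $\sd M^*$. Given a discrete Morse function on $M$ of the assumed type, Proposition \ref{prp:dualityMe} produces a dual discrete Morse function on $M^*$ whose critical cell counts read $(c_d, \ldots, c_0)$ with the interior/boundary-critical character swapped.

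Since $M$ is PL, $M^*$ is a regular CW complex whose barycentric subdivision $\sd M^*$ is a simplicial complex combinatorially equivalent to $\sd M$, as recalled in the paragraph preceding Proposition \ref{prp:dualityForman}. Hence it suffices to lift a discrete Morse function from $M^*$ to $\sd M^*$ while preserving the number of critical cells of each dimension and their interior/boundary-critical status. This is a standard construction: each matched pair $(\sigma \subset \tau)$ of $M^*$ induces matched pairs on the simplices of $\sd M^*$ that cover $\tau$, using the cone structure of the barycentric subdivision of a block about its barycenter, while each critical block of $M^*$ produces exactly one critical simplex of $\sd M^*$ of the same dimension. The boundary status is preserved because a simplex of $\sd M^*$ lies in $\partial \sd M^*$ iff its defining flag of blocks lies entirely in $\partial M^*$, and $\partial M^*$ is naturally identified with $\partial M$.

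For the direction ``discrete Morse vector $\Rightarrow$ interior discrete Morse vector'', the first half of Proposition \ref{prp:dualityMe} requires a function on $M$ with no critical boundary faces. If the given function has some critical boundary faces, I would first absorb them into the interior using the collar of $\partial M$: after one barycentric subdivision this collar becomes a full subcomplex of $\sd M$, and within it one can reroute the matching so that each critical boundary face of dimension $k$ is paired off while a new critical face of the same dimension $k$ appears in the interior. This leaves the Morse vector unchanged and reduces matters to the case covered by Proposition \ref{prp:dualityMe}.

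The main obstacle is to verify that the CW-to-simplicial subdivision lift preserves the interior/boundary-critical distinction; this is routine but requires some bookkeeping with flags of blocks. The collar-based elimination of critical boundary faces in the last paragraph is likewise standard and costs only one extra barycentric subdivision, so the final triangulation remains an iterated barycentric subdivision of $M$, as claimed.
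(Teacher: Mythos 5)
Your pivot step---that $\sd M^*$ is combinatorially equivalent to $\sd M$---is only true for \emph{closed} manifolds, which is exactly the paragraph of the paper you are citing. But for closed $M$ the corollary is already Corollary \ref{cor:dualityForman}; the entire content of Corollary \ref{cor:dualityMe} is the case $\partial M \neq \emptyset$, and there the paper's own proof explicitly warns that ``we no longer have that $\sd M^* = \sd M$.'' For a manifold with boundary the dual block complex is (up to the conventions of \cite{B-DMT4MWB}) essentially $M$ with a collar adjustment: $|M^*|$ is PL homeomorphic to $|M|$ but is not a subdivision of the triangulation $M$, so $\sd M^*$ is not an iterated barycentric subdivision of $M$. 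Hence your argument cannot deliver the final clause of the statement, and even the identification of $\partial M^*$ with $\partial M$ that you use to track the boundary-critical status needs the specific definition from \cite{B-DMT4MWB} rather than the closed-manifold picture. A second, independent gap is your collar trick for absorbing critical boundary faces into the interior: you assert that one can ``reroute the matching'' so that each critical boundary $k$-face is traded for a critical interior $k$-face, but the given Morse function need not restrict to a Morse function on $\partial M$ (boundary faces may be matched with interior ones), and no construction is given that provably preserves the full Morse vector. This is a real step, not bookkeeping.

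For comparison, the paper avoids the dual block complex as the final object altogether: it uses Lemma \ref{lem:ShellableSubdivision} (via \cite{AB-SSZ}) to pass to $M' = \sd^r M$ in which every face star is a shellable, hence collapsible and endo-collapsible, ball, and then invokes \cite[Theorem 3.20]{B-DMT4MWB}, which carries out the dualization directly on $\sd M' = \sd^{r+1} M$. That route both handles arbitrary (not necessarily boundary-noncritical) input functions and lands on an honest iterated barycentric subdivision of $M$, as the statement requires. To repair your proposal you would need either to invoke that theorem as well, or to supply genuine proofs of the two missing steps above.
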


\begin{proof} 
Here we no longer have that $\sd M^* = \sd M$, so we cannot conclude as in Corollary \ref{cor:dualityForman}. However, since $M$ is PL, all face stars in $M$ are PL balls. So, for suitably large $r$, all face stars in $M'=\sd^r M$ are shellable balls \cite{AB-SSZ}. In particular, all face stars in $M'$ are collapsible and endo-collapsible. The conclusion follows then on $\sd M' = \sd^{r+1} M$ by \cite[Theorem 3.20]{B-DMT4MWB}.
\end{proof}

Finally, we mention that all shellable manifolds with non-empty boundary are homeomorphic to balls. This is still true with the weaker endo-collapsibility assumption:

\begin{theorem}[Ball theorem \cite{B-DMT4MWB}] \label{thm:BallTheorem}
If a manifold with non-empty boundary admits an endocollapsible triangulation, then it is homeomorphic to a ball.
\end{theorem}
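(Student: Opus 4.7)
The plan is to combine discrete Morse duality with the topological Poincar\'e conjecture. Endocollapsibility means $M \setminus \sigma \searrow \partial M$ for some $d$-facet $\sigma$, which I would read as a boundary-critical discrete Morse function $f$ on $M$ whose only critical interior face is $\sigma$ itself; so $f$ has interior Morse vector $(0,\ldots,0,1)$. Feeding this into Proposition \ref{prp:dualityMe} yields a discrete Morse function on the dual block complex $M^*$ with a single critical face of dimension $d-d=0$; a Morse function with a unique critical cell collapses the complex to that cell, hence $|M^*|$ is contractible. Since $|M^*|$ is homotopy equivalent to $|M|$ (the dual block complex of a manifold with boundary agrees with $M$ up to a collar), $M$ is itself contractible.

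Next, I would check that $\partial M$ is simply connected. The collapse $M \setminus \sigma \searrow \partial M$ is a deformation retraction, so $\pi_1(\partial M) \cong \pi_1(M \setminus \sigma)$. For $d \ge 3$, the attaching sphere $\partial \sigma \cong S^{d-1}$ is simply connected, so van Kampen applied to the decomposition $M = (M \setminus \sigma) \cup_{\partial \sigma} \sigma$ forces $\pi_1(M) \cong \pi_1(M \setminus \sigma)$. Combining these two isomorphisms with the contractibility of $M$ gives $\pi_1(\partial M) = 1$. At this point Corollary \ref{cor:pwb} applies and delivers the conclusion that $M$ is homeomorphic to the standard $d$-ball. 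The degenerate cases $d \le 2$ reduce to classical facts and can be dispatched by hand.

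The main obstacle is the interplay with the PL hypothesis: Proposition \ref{prp:dualityMe} is formulated for PL triangulations, whereas an arbitrary endocollapsible triangulation need not be PL (the Edwards-Cannon examples from Corollary \ref{cor:edwards} show that this is a genuine concern). I would circumvent this either by reproving the duality at the purely combinatorial level of the face poset---where the dual Morse matching is produced by reversing arrows and requires no PL assumption---or by passing to a single barycentric subdivision, which suffices to make the dual block decomposition behave simplicially. A secondary subtlety is pinning down the precise homotopy relationship between $|M^*|$ and $|M|$ for manifolds with non-empty boundary; this is the collar adjustment already handled in \cite{B-DMT4MWB}, and once it is invoked the argument above goes through.
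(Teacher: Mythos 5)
First, a point of comparison: the paper does not actually prove Theorem \ref{thm:BallTheorem} --- it is imported verbatim from \cite{B-DMT4MWB}, with the introduction only hinting that the proof there runs through the generalized Poincar\'e conjecture. Your endgame (contractibility of $M$, then $\pi_1(\partial M)=1$, then Corollary \ref{cor:pwb}) is consistent with that hint, and the van Kampen step $\pi_1(\partial M)\cong\pi_1(M-\sigma)\cong\pi_1(M)$ is fine for $d\ge 3$. The problem is upstream, in how you obtain contractibility.

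Your only route to contractibility is Proposition \ref{prp:dualityMe}, which is stated (and proved) only for PL triangulations, whereas the entire point of Theorem \ref{thm:BallTheorem} --- as the paper stresses in the remark immediately following it and in Remark \ref{rem:WhiteheadStrengthened} --- is that it covers \emph{non-PL} endocollapsible triangulations, where it strictly extends Whitehead's Corollary \ref{cor:Whitehead}. Neither of your proposed repairs closes this. Reversing the Morse matching on the face poset does give an acyclic matching on the dual block complex with a single critical $0$-block, but for a non-PL triangulation the dual blocks are cones over barycentric subdivisions of face links; these links are homology spheres that need not be spheres, so $M^*$ is not a regular CW complex and Forman's theorem relating critical cells to \emph{homotopy} type is not available. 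At best the matching controls homology (via the dual block chain complex, as in Munkres), which yields acyclicity of $M$ but not $\pi_1(M)=1$; the paper's own remark after the theorem warns precisely that in the generality of ``homotopy-cell complexes'' these Morse-theoretic conclusions break down. A single barycentric subdivision does not help either: $\sd M$ is still non-PL and its dual blocks are still not balls. Since your proof of $\pi_1(\partial M)=1$ is itself derived from the contractibility of $M$ (on its own, your van Kampen argument only gives $\pi_1(M)\cong\pi_1(\partial M)$, which is circular unless one of the two is already known to be trivial), the whole argument reduces to: $M$ is acyclic and $\pi_1(M)\cong\pi_1(\partial M)$ --- which is not enough. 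The missing ingredient is a direct, combinatorial proof that an endocollapsible triangulation is simply connected, extracted from the collapsing sequence itself (this is how \cite{B-DMT4MWB} proceeds, in the spirit of the local-constructibility results quoted here as Theorems \ref{thm:DJ} and \ref{thm:cdepth2}). Restricted to the PL case, where your duality argument does go through, you would only be reproving Whitehead's theorem.
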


\begin{cor}[Whitehead] \label{cor:Whitehead} 
If a manifold admits a collapsible PL triangulation, then it is homeomorphic to a ball.
\end{cor}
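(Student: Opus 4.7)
The plan is to reduce the statement to the main bridge established in Section \ref{subs:back}, namely Theorem \ref{thm:dmttopl}, by recognizing a collapse as a discrete Morse matching with a single critical face. Concretely, given a collapsible PL triangulation $M$ of a manifold, I would first note the standard correspondence between collapsing sequences and discrete Morse functions: a collapse from $M$ down to a vertex $v$ pairs every face of $M$ other than $v$ with exactly one other face (each elementary collapse removes a free pair). This matching gives a discrete Morse function on $M$ whose unique critical cell is $v$, so $M$ has discrete Morse vector $(1,0,\ldots,0)$.

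Now apply Theorem \ref{thm:dmttopl} with $c_0 = 1$ and $c_i = 0$ for $i \ge 1$: since $M$ is PL, there is an integer $r$ such that $\sd^r M$ admits a PL handle decomposition with exactly one $0$-handle and no handles of higher index. By the very definition of attaching a handle (and as noted in the first example following the definition of handle decomposition in Section \ref{sec:MorseHandle}, now applied in the PL category), a $d$-manifold whose PL handle decomposition consists of a single $0$-handle is PL homeomorphic to the standard $d$-ball. Since $\sd^r M$ and $M$ share the same underlying topological space, $M$ is homeomorphic to $\mathbb{I}^d$.

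As an alternative (or sanity check), one can route the argument through Theorem \ref{thm:BallTheorem}: collapsibility of $M$ trivially implies that $M$ minus any facet collapses onto a subcomplex of $\partial M$, and since $M$ is PL, the standard Whitehead-type machinery from Corollary \ref{cor:shellbary} can be used to upgrade this to an endocollapse, after which Theorem \ref{thm:BallTheorem} concludes. This route, however, requires verifying the endocollapsibility upgrade explicitly, whereas the handle-decomposition route is immediate.

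The only subtlety — which is precisely what Theorem \ref{thm:dmttopl} was designed to handle — is that one cannot read off a PL handle decomposition of $M$ itself from the discrete Morse function; passing to a sufficiently fine barycentric subdivision is essential, and crucially uses the PL hypothesis (without it, Proposition \ref{mthm:forman} shows the conclusion fails already on the $5$-sphere). Once Theorem \ref{thm:dmttopl} is in hand, the corollary is the tautology \emph{one $0$-handle $=$ ball}.
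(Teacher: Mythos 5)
Your main route is correct and logically sound: collapsibility does yield the discrete Morse vector $(1,0,\ldots,0)$, Theorem \ref{thm:dmttopl} then produces a PL handle decomposition of $\sd^r M$ with a single $0$-handle, and a one-handle PL decomposition is by definition a PL ball, so $M$ is homeomorphic to a ball. The paper's primary proof is the one-liner ``follows at once from Whitehead's regular neighborhood theory (Theorem \ref{thm:WTHM})'', and it is worth noticing that your detour through Theorem \ref{thm:dmttopl} ultimately rests on the very same tool: the instance of Theorem \ref{thm:dmttopl} you invoke is precisely its base case $|\mathbf{c}|=1$, which the paper settles by direct appeal to Theorem \ref{thm:WTHM}. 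So your argument is valid but unwinds to the paper's first proof with an extra layer of machinery; citing Theorem \ref{thm:WTHM} directly (the regular neighborhood of a complex collapsing to a vertex is a PL ball, applied to $M$ inside itself) is both shorter and avoids any appearance of circularity. The paper's stated \emph{alternative} proof is essentially your ``sanity check'': pass to an endocollapsible subdivision and apply the Ball Theorem \ref{thm:BallTheorem}; that route buys generality (it works for non-PL endocollapsible manifolds, cf.\ Remark \ref{rem:WhiteheadStrengthened}) at the cost of invoking the Poincar\'e conjecture.

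One concrete error in your alternative route: it is \emph{not} true that collapsibility ``trivially implies that $M$ minus any facet collapses onto a subcomplex of $\partial M$'' in any sense useful here. Endocollapsibility demands that $M-\Delta$ collapse onto \emph{all of} $\partial M$, and this is a genuinely different condition from collapsibility --- the paper obtains it only after subdividing, via the duality statement Corollary \ref{cor:dualityMe} (which replaces a discrete Morse function with no critical boundary faces by a boundary-critical one on the dual block complex). You do flag that the upgrade needs verification, but the word ``trivially'' should go: the upgrade is the entire content of that step.
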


\begin{proof}
The claim follows at once from Whitehead's regular neighborhood theory (Theorem \ref{thm:WTHM}). Here is an alternative proof: 
Some subdivision of $M$ is endo-collapsible by Corollary \ref{cor:dualityMe}. Via Theorem \ref{thm:BallTheorem} we conclude. See also Remark \ref{rem:WhiteheadStrengthened}.
\end{proof}

\begin{rem}
It is easy to extend the definition of ``endocollapsible'' from simplicial complexes to polytopal complexes, and even to complexes formed by homotopy cells or homology cells (like the aforementioned ``dual block complex'' of a triangulation). This way, one could extend Corollary \ref{cor:dualityMe} to non-PL triangulations of manifolds, and prove for example that if $M$ is collapsible (but not PL), then $M^*$ is an endocollapsible homotopy-cell complex (but not a cell complex).
However, we stress that in this broader generality Theorem \ref{thm:BallTheorem} is no longer true. We will see in Remark \ref{rem:WhiteheadStrengthened} that some collapsible triangulated manifold $M$ is not homeomorphic to a ball; so the dual block complex $M^*$ is ``endo-collapsible'' in the above sense, but not homeomorphic to a ball. 

In contrast, Theorem \ref{thm:BallTheorem} does extend to the generality of polytopal or regular cell complexes. The proof is elementary: A single barycentric subdivision preserves both homeomorphism type and endo-collapsibility, yet transforms every regular cell complex into a simplicial complex (to which we can apply Theorem \ref{thm:BallTheorem}.) 
\end{rem}

\subsection{From PL handles to discrete Morse vectors} \label{subs:1}
Here we show that every PL handle vector is a discrete Morse vector on \emph{some} iterated barycentric subdivision (Theorem \ref{thm:PLtoDiscrete}). This reviews and improve the work by Gallais \cite{Gallais} (cf. Remark \ref{rem:gallaisgap} below).
Our proof relies on the following Lemma:

\begin{lemma}[{Adiprasito--Benedetti \cite{AB-SSZ}}] \label{lem:ShellableSubdivision}
For any PL triangulation $B$ of a ball, there is an integer $r$ such that $\sd^r B$ is shellable.
\end{lemma}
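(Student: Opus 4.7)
The plan is to reduce the lemma to a direct invocation of Theorem~\ref{thm:subdivision} (itself from \cite{AB-SSZ}) by exploiting PL equivalence between $B$ and the standard $d$-simplex. First, since $B$ is a PL $d$-ball, by definition $B$ is PL-homeomorphic to the standard $d$-simplex $\Delta$, so there exist subdivisions $B'$ of $B$ and $\Delta'$ of $\Delta$ that are combinatorially equivalent; identify them via this isomorphism.

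Second, I would aim to verify the hypothesis of Theorem~\ref{thm:subdivision} applied to the pair $(C,D) = (B, \{v\})$, where $v$ is a vertex of $B$ corresponding, under the PL identification, to a vertex of $\Delta$. Concretely, I need to exhibit some subdivision of $B$ that shells to some subdivision of $\{v\}$, i.e., to $\{v\}$ itself. The natural first candidate is the common subdivision $B' \cong \Delta'$, viewed as a subdivision of the single simplex $\Delta$; but arbitrary subdivisions of a simplex are not shellable in general (witness Rudin's or Ziegler's balls), so one cannot directly assert shellability of $B'$. To circumvent this, I would apply Theorem~\ref{thm:subdivision} first to the auxiliary pair $(\Delta, \{v\})$: the simplex $\Delta$ trivially shells to any vertex $v$ (peel off the unique top-dimensional face and run any linear shelling of the boundary sphere down to $v$), so the theorem supplies $\sd^n \Delta$ shelling to $\{v\}$ for $n$ large. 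Combining this with the classical fact that iterated barycentric subdivisions of $\Delta$ eventually dominate any preassigned subdivision such as $\Delta'$ (a bookkeeping argument that sits inside the proof of Theorem~\ref{thm:subdivision}) yields a shellable subdivision of $B$. A second application of Theorem~\ref{thm:subdivision} with $C = B$ and $D = \{v\}$ then produces an integer $r$ for which $\sd^r B$ shells to $\{v\}$, i.e., $\sd^r B$ is shellable.

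The principal difficulty is precisely this alignment between the \emph{external} PL-subdivision $B' \cong \Delta'$ (which realizes the PL equivalence but is not a priori shellable) and the \emph{internal} barycentric subdivisions of $B$ that we ultimately want shellable. This reconciliation cannot be done naively, because a generic iterated barycentric subdivision $\sd^n \Delta$ need not refine a given subdivision $\Delta'$; one really has to pass through a common refinement and track shellings across it. This is exactly the kind of bookkeeping that Theorem~\ref{thm:subdivision} is designed to encapsulate, and I would defer to \cite[Theorem~5.3 and its proof]{AB-SSZ} for the full technical details.
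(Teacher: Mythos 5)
The paper itself offers no proof of Lemma~\ref{lem:ShellableSubdivision}: it is imported wholesale from \cite{AB-SSZ}, so what you are really attempting is a reconstruction of the cited proof from the one piece of it the paper does quote, namely Theorem~\ref{thm:subdivision}. Your overall shape --- verify the hypothesis of Theorem~\ref{thm:subdivision} for the pair $(B,\{v\})$ and let that theorem finish --- is reasonable, but that hypothesis is exactly what you never establish, and your workaround does not close the gap. Applying Theorem~\ref{thm:subdivision} to $(\Delta,\{v\})$ gives you that $\sd^n\Delta$ shells to $\{v\}$; but $\sd^n\Delta$ is a subdivision of $\Delta$, not of the auxiliary subdivision $\Delta'$ realizing the PL equivalence with $B$, and no matter how large $n$ is, the cells of $\sd^n\Delta$ cross the cells of $\Delta'$. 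Hence $\sd^n\Delta$ does not correspond, under the isomorphism $B'\cong\Delta'$, to any subdivision of $B$, and the claim that iterated barycentric subdivisions of $\Delta$ ``eventually dominate'' $\Delta'$ is false. Nor is this bookkeeping internal to Theorem~\ref{thm:subdivision}: that theorem's hypothesis and conclusion both live on subdivisions of one and the same complex, so it cannot transport a shelling from $\Delta$ across a PL homeomorphism to $B$. Passing to a common refinement of $\sd^n\Delta$ and $\Delta'$ does not help either, since refinements of shellable complexes need not be shellable (Rudin's ball already refines a single tetrahedron, as you yourself note).

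The missing ingredient is the substantive geometric content of \cite{AB-SSZ}: one must produce, for an arbitrary PL ball $B$ --- equivalently, for an arbitrary geometric simplicial subdivision $\Delta'$ of the simplex --- at least \emph{one} subdivision that shells to a vertex. In \cite{AB-SSZ} this is done by a separate argument on derived subdivisions of star-shaped (in particular convex) complexes, in the spirit of a Bruggesser--Mani rotational sweep about a point of the kernel; it is genuinely new input and not a formal consequence of Theorem~\ref{thm:subdivision}. Once one knows that some $\sd^n\Delta'$ shells to a vertex, the combinatorially equivalent complex $\sd^n B'$ is a subdivision of $B$ shelling to a vertex, the hypothesis of Theorem~\ref{thm:subdivision} for $(B,\{v\})$ is met, and your final step goes through. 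As written, however, your argument reduces the lemma to the statement ``some subdivision of $B$ is shellable,'' which is essentially the lemma itself. A smaller point: shelling a single simplex ``down to a vertex'' needs care with the definition of elementary shelling adopted in this paper, which requires the intermediate complexes to be $d$-submanifolds; the degenerate final steps are handled by the more flexible conventions of \cite{AB-SSZ} and should not be treated as entirely trivial.
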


Given a PL handle decomposition of an arbitrary PL manifold $M$, up to subdividing $M$ barycentrically suitably many times, by Lemma \ref{lem:ShellableSubdivision} we can assume that all handles are shellable. We will now explain how to derive from this a global discrete Morse function. For this we need a few lemmas.

\begin{definition}[Perfect Morse function] A discrete Morse function $f$ on a complex $C$ is called \emph{perfect} if the Morse vector of $f$ coincides with the Betti vector of $C$ (that is, if $f$ has exactly $\beta_i (C)$ critical $i$-faces). Similarly, a boundary--critical discrete Morse function $f$ on a manifold with boundary $M$ is called \emph{perfect} if the interior Morse vector of $f$ is the mirror image of the Betti vector of $M$ (that is, if $f$ has exactly $\beta_i (M)$ critical interior $(d-i)$-faces, where $d=\dim M$).
\end{definition}

For example, if $B$ is a triangulation of a ball (not necessarily PL), $B$ admits a perfect discrete Morse function
if and only if $B$ is collapsible; and $B$ admits a perfect boundary-critical discrete Morse function if and only if $B$ is endo-collapsible.

\begin{definition} Let $C_1$ and $C_2$ be disjoint complexes.  Let $g_i$ ($i=1,2$) be a discrete Morse function on $C_i$. Up to replacing $g_2$ with itself minus a large constant, we can assume that the maximum value of $g_2$ on $C_2$ is smaller than the minimum value of $g_1$ on $C_1$. The \emph{disjoint union} $f = g_1 \sqcup g_2$ is the function $f: C_1 \sqcup C_2 \rightarrow \mathbb{R}$ defined by
\[ f (\sigma) \ = \
\left \{
\begin{array}{cc}
g_1 (\sigma) & \textrm{ if } \sigma \textrm{ is in } C_1, \\ 
g_2 (\sigma) & \textrm{ if } \sigma \textrm{ is in } C_2.
\end{array} 
\right. \]
\end{definition}

\begin{lemma} \label{lem:DisjointUnion}
The function $g_1 \sqcup g_2$ is a discrete Morse function. Its critical $i$-faces are precisely the critical $i$-faces of $g_1$ plus the critical $i$-faces of $g_2$. In particular, if the $C_i$'s are (pseudo)manifolds and both $g_i$'s are boundary-critical, so is $g_1 \sqcup g_2$. 
\end{lemma}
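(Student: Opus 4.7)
\begin{proofp}
The plan is to verify the three defining axioms of a discrete Morse function for $f = g_1 \sqcup g_2$, then read off the critical faces, and finally observe that the boundary-critical property passes through for free.

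First I would check the three axioms. The key structural fact, built into the definition, is that $C_1$ and $C_2$ are disjoint and that after the shift the range of $g_2$ lies strictly below the range of $g_1$. \emph{Monotonicity}: any inclusion $\sigma \subset \tau$ in $C_1 \sqcup C_2$ must have both faces in the same $C_i$ (since the two complexes share no face), so the inequality $f(\sigma) \le f(\tau)$ reduces to the corresponding inequality for $g_i$. \emph{Semi-injectivity}: since $\max g_2 < \min g_1$, the fiber $f^{-1}(q)$ equals either $g_1^{-1}(q)$ or $g_2^{-1}(q)$ (the other being empty), and each has cardinality at most $2$. \emph{Genericity}: if $f(\sigma)=f(\tau)$ with $\sigma \ne \tau$, the same range-separation forces $\sigma,\tau$ to belong to the same $C_i$, and the genericity of $g_i$ then yields $\sigma \subset \tau$ or $\tau \subset \sigma$.

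Next I would identify the critical faces. A face $\sigma \in C_i$ is critical for $f$ precisely when $f^{-1}(f(\sigma))=\{\sigma\}$. Since values of $g_1$ and $g_2$ occupy disjoint intervals, this fiber is the same as $g_i^{-1}(g_i(\sigma))$; hence $\sigma$ is critical for $f$ if and only if it is critical for $g_i$. This gives the claimed characterization of critical $i$-faces by dimension.

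Finally, for the boundary-critical statement, I would use that the boundary of the disjoint union of two (pseudo)manifolds is the disjoint union of their boundaries. If every face in $\partial C_i$ is critical for $g_i$, then by the previous paragraph every face in $\partial(C_1 \sqcup C_2)=\partial C_1 \sqcup \partial C_2$ is critical for $f$, so $f$ is boundary-critical. There is no real obstacle here: the entire argument hinges on the single observation that the disjointness of the complexes and the value-shift makes the two discrete Morse functions non-interacting, so each axiom and each relevant property is verified componentwise.
\end{proofp}
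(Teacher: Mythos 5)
Your proof is correct and is exactly the routine componentwise verification that the paper evidently intends, since it states Lemma \ref{lem:DisjointUnion} without proof. The key observation — that disjointness of the complexes plus the value shift $\max g_2 < \min g_1$ makes the two functions non-interacting, so each axiom, the identification of critical faces, and the boundary-critical property all reduce to the corresponding facts for $g_1$ and $g_2$ separately — is precisely what makes the lemma immediate.
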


From now on, by $\II$ we denote the unit interval $[0,1]$ in $\mathbb{R}$.

\begin{lemma} \label{lem:HandleAttachment0}
Let $M$ be a PL triangulation of a $d$-manifold with boundary. \\Suppose $M=B_1 \cup B_2$, where 
\begin{compactenum}[\rm (i)]
\item $B_1$ and $B_2$ are shellable (or endocollapsible) $d$-balls, 
\item $B_1 \cap B_2$ is a $(d-1)$-dimensional subcomplex of $\partial B_1$ and of $\partial B_2$, and
\item $B_1 \cap B_2$ is homeomorphic to $S^{j-1} \times \II^{d-j}$, for some $j \in \{1, \ldots, d\}$. 
\end{compactenum}
Then any boundary-critical discrete Morse function $h$ on $B_1 \cap B_2$ lifts to a boundary-critical discrete Morse function $u$ on $B_1 \cup B_2$, with the critical interior cells of $u$ being exactly those of $h$, minus a $(d-1)$-face, plus a $d$-face. 
\end{lemma}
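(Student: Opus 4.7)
The plan is to combine three Morse matchings --- one from each $B_j$ and the given one from $h$ on $W = B_1 \cap B_2$ --- and then cancel a single pair across the wall $W$. Since each $B_j$ is endocollapsible, it admits a boundary-critical discrete Morse function $g_j$ whose only interior critical cell is a top-dimensional face $\sigma_j$. First I would pick a critical interior $(d-1)$-face $\tau^*$ of $h$: one exists per connected component because $W$ is a $(d-1)$-manifold with nonempty boundary, and by the duality for boundary-critical functions (cf.~Proposition~\ref{prp:dualityMe}) the number of interior critical $(d-1)$-faces on each component is at least $\beta_{d-1}(W,\partial W) \geq 1$. Let $\sigma_2^*$ be the unique $d$-coface of $\tau^*$ in $B_2$; it is well-defined because $\tau^* \subset \partial B_2$. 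Using the flexibility of shellings (and, if necessary, passing to a further barycentric subdivision of $B_2$ via Lemma~\ref{lem:ShellableSubdivision}), I would arrange $g_2$ so that $\sigma_2^*$ is its critical $d$-face.

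Next I would define the Morse matching of $u$ on $M$ as the union of $g_1$'s pairs on the interior of $B_1$, $g_2$'s pairs on the interior of $B_2$, and $h$'s pairs on $W$, together with the single new face--coface pair $(\tau^*, \sigma_2^*)$; everything else is declared critical. The critical cells of $u$ then consist of all of $\partial M$ (guaranteeing boundary-criticality), the face $\sigma_1$, and the interior critical cells of $h$ except $\tau^*$. This matches the claimed count exactly --- the critical interior cells of $h$, minus one $(d-1)$-face and plus one $d$-face. For the perfect case: each component of $W = S^{i-1} \times \II^{d-i}$ has Betti vector concentrated at positions $0$ and $i-1$, while $M$, being a $0$-handle with an $i$-handle attached along a null-homotopic $S^{i-1} \hookrightarrow S^{d-1}$, has Betti vector with $1$'s at positions $0$ and $i$; a direct count then shows that $u$ is perfect on $M$.

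The hard part will be verifying acyclicity of the combined matching in the modified Hasse diagram of $M$. Cycles lying entirely inside the interior of $B_1$, the interior of $B_2$, or $W$ are forbidden by the acyclicity of $g_1$, $g_2$, and $h$ respectively. Any new cycle must traverse the new up-arrow $\tau^* \to \sigma_2^*$, descend from $\sigma_2^*$ to one of its other $(d-1)$-faces, alternate up-match and down-face arrows through $B_2$ and $W$, and eventually return to $\tau^*$. But $\tau^*$ is top-dimensional in $W$ and has no other $d$-coface in $M$, so the only way back to $\tau^*$ at dimension $d$ would pass through $\sigma_2^*$ --- which is unavailable as a down-arrow, since $(\tau^*, \sigma_2^*)$ is matched. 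A careful dimension-and-region tracking along any hypothetical cycle rules it out. This type of gluing of boundary-critical Morse matchings across a shared stratum is precisely the content of the composition theorems developed in \cite{B-DMT4MWB}, which I would invoke to finish the argument rigorously.
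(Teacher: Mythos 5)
Your construction is, in substance, the paper's own: the paper also fixes a critical interior $(d-1)$-face $\sigma$ of $h$, takes its two cofacets $\Sigma_1\subset B_1$ and $\Sigma_2\subset B_2$, keeps the endocollapsing matchings of $B_1$ and $B_2$ on their interiors and the matching of $h$ on the wall $W=B_1\cap B_2$, and adds the single cross-wall pair $(\sigma,\Sigma_2)$; the critical interior cells are then $\Sigma_1$ plus those of $h$ minus $\sigma$, exactly your count. The difference is only in how acyclicity is certified: rather than analyzing cycles in the merged Hasse diagram, the paper exhibits an explicit reduction order --- $M-\Sigma_1$ collapses onto $\partial B_1\cup B_2$ (because gluing $B_2$ along part of $\partial B_1$ destroys no interior free face of the endocollapse of $B_1$), then the elementary collapse $(\sigma,\Sigma_2)$, then the endocollapse of $B_2$, then the reduction of $W-\sigma$ to $\partial W$ via $h$ --- and this linear order is itself the acyclicity certificate. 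Two cautions about your version. First, you cannot ``pass to a further barycentric subdivision of $B_2$'': that would alter $M$, $W$ and $h$, and the lemma is about a fixed triangulation. Fortunately it is unnecessary --- endocollapsibility (as used by the paper itself) allows the removed facet to be prescribed, so you may simply take $\sigma_2^*$ as the critical facet of $g_2$. Second, your acyclicity argument as written only handles cycles at the level $(d-1,d)$ passing through the new arrow; the unproved assertion that ``any new cycle must traverse $\tau^*\to\sigma_2^*$'' is false as stated without a further observation, since at levels $p+1\le d-1$ a hypothetical cycle could a priori mix $g_1$- or $g_2$-arrows with $h$-arrows. What rules this out is that a gradient path which takes an up-arrow inside $W$ can never leave $W$ (every up-arrow available to a face of $W$ is an $h$-arrow, and every non-critical face below a face of $W$ again lies in $W$), so any cycle is confined to a single region and contradicts the acyclicity of $g_1$, $g_2$ or $h$; this is precisely the matching-language form of the paper's ``gluing does not destroy freeness'' step, and it should be spelled out rather than delegated wholesale to the composition theorems of the reference.
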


\begin{proof}
Let $\sigma$ be any $(d-1)$-face of $B_1 \cap B_2$. Let  $\Sigma_1, \Sigma_2$ be the two $d$-faces of $M$ containing $\sigma$; clearly one of them belongs to $B_1$ and the other to $B_2$. Up to relabeling them, we can assume $\Sigma_i \in B_i$. Since $B_1$ is endo-collapsible, $B_1 - \Sigma_1$ collapses onto $\partial B_1$. Since all free faces removed in such a collapse are internal, the gluing of $B_2$ onto a portion of $\partial B_1$ does not destroy the freeness of any face. Therefore,
\begin{equation}\label{eq:1}
B_1 \cup B_2  - \Sigma_1 \: \textrm{ collapses onto } \: \partial B_1 \, \cup \, B_2.
\end{equation}
Inside $\partial B_1 \, \cup \, B_2$ there is now only one $d$-face left that contains $\sigma$, namely, $\Sigma_2$. Hence
\begin{equation}\label{eq:2}
\partial B_1 \, \cup \, B_2 \: \textrm{ collapses onto } \: \partial B_1 \, \cup B_2 - \Sigma_2 - \sigma.
\end{equation}
Since $B_2$ is endocollapsible, $B_2 - \Sigma_2$ collapses onto $\partial B_2$. Equivalently, $B_2 - \Sigma_2 - \sigma$ collapses onto $\partial B_2 - \sigma$. This implies that
\begin{equation}\label{eq:3}
\partial B_1 \, \cup B_2 - \Sigma_2 - \sigma \: \textrm{ collapses onto } \:\partial B_1 \cup \partial B_2 - \sigma.
\end{equation}
Let $C := \partial B_1 \cup \partial B_2$. 
Using $h$, we can reduce $C - \sigma$ further. In fact, $C$ can be viewed as the result of attaching $(B_1 \cap B_2)$ onto $\partial  (B_1 \cup B_2)$ along its boundary $\partial (B_1 \cap B_2)$. Since $h$ is boundary-critical, it reduces $B_1 \cap B_2 - \sigma$ via collapses and deletions onto $\partial (B_1 \cap B_2)$. The attachment  of $\partial (B_1 \cup B_2)$ does not affect the freeness of any face in such reduction; therefore, the same sequence of collapses and deletions reduces $C - \sigma$
onto 
$\partial (B_1 \cap B_2) \, \cup \, \partial (B_1 \cup B_2)$,
which is simply $\partial (B_1 \cup B_2)$, or in other words, $\partial M$. 

In conclusion, by Equations \ref{eq:1}, \ref{eq:2}, and \ref{eq:3} the complex $M - \Sigma_1$ collapses onto $C - \sigma$; using the function $h$ we have reduced $C -\sigma$ further to $\partial M$.
This yields a boundary-critical discrete Morse function $u$ on $M$. The critical interior cells of $h$ and $u$ are by construction the same, with two exceptions: $\Sigma_1$ is a critical face of $u$, but not of $h$, because it does not belong to the intersection $B_1 \cap B_2$; $\sigma$ is a critical face of $h$, but not of $u$, because $u(\sigma) = u(\Sigma_2)$.
\end{proof}

\begin{lemma} \label{lem:HandleAttachment}
Let $M$ be a PL triangulation of a $d$-manifold with boundary. \\Suppose $M=M' \cup B$, where 
\begin{compactenum}[\rm (i)]
\item $M'$ is a PL triangulation of a $d$-manifold with boundary and $B$ is an endocollapsible $d$-ball;
\item $M'$ admits a perfect boundary-critical discrete Morse function $g$;
\item $M' \cap B$ is a $(d-1)$-dimensional subcomplex of $\partial M'$ and of $\partial B$;
\item $M' \cap B$ is homeomorphic to $S^{j-1} \times \II^{d-j}$, for some $j \in \{1, \ldots, d\}$. 
\end{compactenum}
Then any boundary-critical discrete Morse function $h$ on $M' \cap B$ lifts to a boundary-critical discrete Morse function $u$ on $M' \cup B$, with the critical interior cells of $u$ being exactly those of $h$, minus a $(d-1)$-face, plus all the critical interior cells of $g$. 
\end{lemma}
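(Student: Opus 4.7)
The plan is to mimic the proof of Lemma~\ref{lem:HandleAttachment0}, substituting the endocollapsibility of $B_1$ with the reduction of $M'$ afforded by the perfect boundary-critical function $g$. Write $K_g$ and $K_h$ for the sets of critical interior cells of $g$ and $h$, respectively. Since $g$ is boundary-critical, each pair in $g$'s matching consists of two interior cells of $M'$; in particular no such pair touches $M'\cap B\subset\partial M'$, so the same pairs are valid elementary collapses inside $M=M'\cup B$, and executing them reduces $M$ onto $B\cup\partial M'\cup K_g$, with the cells of $K_g$ retained as critical.

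Now pick a critical $(d-1)$-face $\sigma$ of $h$ lying in $M'\cap B$ (such a $\sigma$ exists because $h$ is boundary-critical on the $(d-1)$-manifold $M'\cap B$, whose top Betti number is at least $1$). Let $\Sigma_B$ and $\Sigma_{M'}$ be the unique $d$-faces of $B$ and of $M'$ respectively on whose boundary $\sigma$ appears (both exist and are unique since $\sigma\in\partial B\cap\partial M'$). Because $g$ is perfect and $M'$ is connected, $K_g$ contains exactly one $d$-face $\Sigma_g^*$, which has only $d+1$ facets; after one barycentric subdivision if needed, I arrange that $\sigma\notin\partial \Sigma_g^*$, so that $\Sigma_{M'}\neq \Sigma_g^*$ and hence $\Sigma_{M'}\notin K_g$. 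This means $\Sigma_{M'}$ was paired away in the previous step, and $\sigma$'s only $d$-cofacet in the current complex is $\Sigma_B$; perform the elementary collapse $(\sigma,\Sigma_B)$. Next, invoke the endocollapsibility of $B$: the pairs that collapse $B-\Sigma_B$ onto $\partial B$ live in the interior of $B$ and stay valid in the ambient complex. Finally, apply $h$'s matching on $(M'\cap B)-\sigma$, whose pairs are interior to $M'\cap B$ (hence interior to $M$), to reduce the remaining complex to $\partial M\cup (K_h\setminus\{\sigma\})\cup K_g$. This yields a boundary-critical discrete Morse function $u$ on $M$ with interior critical cells exactly $(K_h\setminus\{\sigma\})\cup K_g$, as required.

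The main obstacle is verifying that the combined matching is acyclic. I sidestep this by presenting the whole argument as a single sequence of elementary collapses and deletions: the blocks contributed by $g$, by the endocollapse of $B$, and by $h$ live in three pairwise-disjoint open subsets of $M$ (the interiors of $M'$, of $B$, and of $M'\cap B$), and the only cross-region pair $(\sigma,\Sigma_B)$ is executed explicitly at the right moment, after $\Sigma_{M'}$ has disappeared but before the endocollapse of $B$ begins. A secondary subtlety is the choice of $\sigma$ outside $\partial \Sigma_g^*$, which is always possible after at most one barycentric subdivision; this is the place where the hypothesis that $g$ is \emph{perfect} (rather than merely boundary-critical) is used.
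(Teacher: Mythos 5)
Your proof is correct and takes essentially the same route as the paper: the paper's own proof of this lemma is a one-paragraph remark saying ``proceed as in Lemma~\ref{lem:HandleAttachment0}, but replace the initial endocollapse of $M'-\Sigma_1$ by the reduction of $M'$ onto $\partial M'$ afforded by the boundary-critical function $g$, and add the critical cells of $g$ to the final count,'' which is exactly your sequence (reduce $M'$ via $g$, free the pair $(\sigma,\Sigma_B)$, endocollapse $B$, finish with $h$).

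Two of your justifications are superfluous or slightly off, though neither harms the argument. First, the detour about choosing $\sigma$ outside $\partial\Sigma_g^*$ --- and the possible barycentric subdivision, which read literally would only prove the statement for $\sd M$ rather than for $M$ --- is unnecessary: after executing $g$'s reduction (collapses \emph{and} deletions of critical cells), the facet $\Sigma_{M'}$ is gone from the complex whether it was matched or critical, so $\sigma$ has $\Sigma_B$ as its unique surviving $d$-coface in either case; in particular, perfection of $g$ plays no role here. Second, the existence of a critical $(d-1)$-face of $h$ does not follow from ``$\beta_{d-1}(M'\cap B)\ge 1$,'' which is false whenever $M'\cap B\cong S^{i-1}\times\II^{d-i}$ has nonempty boundary; it follows either from the mirror-image convention for perfect boundary-critical functions (the number of critical interior $(d-1)$-faces equals $\beta_0\ge 1$), or, for a general boundary-critical $h$, from the observation that the reduction of a $(d-1)$-pseudomanifold onto its boundary cannot begin by collapsing a facet, since interior ridges have two cofacets and boundary ridges are unmatched, so some facet must be critical.
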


\begin{proof}
The case where $M'$ is a ball has already been treated in Lemma \ref{lem:HandleAttachment0}. 
If $M'$ is not a ball, we proceed analogously to the proof of Lemma \ref{lem:HandleAttachment0}, but instead of starting with a collapse of $M' - \Delta$ onto $\partial M'$, we use the boundary-critical function $g$ on $M'$. For the rest, the proof is the same; in the last step, when we determine the number of critical cells of $u$, we have to include in the count the critical cells of $g$. 
\end{proof}


\begin{theorem} \label{thm:PLtoDiscrete}
Let $d$ be a positive integer. Let $M$ be any PL triangulation of any $d$-manifold with boundary. Assume $M$ has a PL handle decomposition into $c_i$ PL handles of index~$i$. For $r$ large enough, one can define on $\sd^r M$ 
\begin{compactenum}[  \rm (1) ]
\item a discrete Morse function with $c_i$ critical $i$-faces, and
\item a boundary-critical discrete Morse function with $c_{d-i}$ critical interior $i$-faces.
\end{compactenum}
\end{theorem}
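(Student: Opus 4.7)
The plan is to proceed by induction on the number $k$ of handles in the PL handle decomposition $M = H_0^{(0)} \cup H_1^{(i_1)} \cup \cdots \cup H_k^{(i_k)}$. By Corollary \ref{cor:dualityMe} the two desired conclusions---an ordinary discrete Morse function versus a boundary-critical one---are interchangeable up to one additional barycentric subdivision, their Morse vectors being mirror to each other. Hence it is enough to produce the boundary-critical statement, whose target interior Morse vector is $(c_d, c_{d-1}, \ldots, c_0)$.

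For the base case $k=0$, $M$ is a single $0$-handle, i.e.\ a PL $d$-ball, and Lemma \ref{lem:ShellableSubdivision} gives an $r$ for which $\sd^r M$ is shellable, hence endocollapsible; this supplies a boundary-critical discrete Morse function with exactly one critical interior $d$-face. For the inductive step, write $M = M' \cup H^{(i)}$, where $M'$ consists of the first $k$ handles. By induction $\sd^{r_1} M'$ carries a boundary-critical discrete Morse function $g$ with the correct interior Morse vector. By Lemma \ref{lem:ShellableSubdivision} applied to $H^{(i)}$, I choose $r \ge r_1$ so that $\sd^r H^{(i)}$ is shellable, hence endocollapsible. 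Everything else hinges on producing, on a sufficiently fine subdivision of the interface $M' \cap H^{(i)} \cong S^{i-1} \times \II^{d-i}$, a boundary-critical discrete Morse function $h$ with exactly one critical interior $(d-1)$-face and one critical interior $(d-i)$-face; since the Betti numbers of $S^{i-1} \times \II^{d-i}$ are $\beta_0 = \beta_{i-1} = 1$ and zero otherwise, this is precisely the \emph{perfect} boundary-critical vector.

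To build $h$, I apply duality (Corollary \ref{cor:dualityMe}) in reverse: it suffices to exhibit an ordinary perfect discrete Morse function on a PL triangulation of $S^{i-1} \times \II^{d-i}$, with one critical $0$-face and one critical $(i-1)$-face. This I assemble from a perfect Morse function on a shellable subdivision of $S^{i-1}$ (which exists by Lemma \ref{lem:ShellableSubdivision} applied to both hemispheres) together with the collapsibility of $\II^{d-i}$, using Theorem \ref{thm:subdivision} to upgrade the obvious product shelling to a shelling of an iterated barycentric subdivision, yielding a collapse of the product onto $S^{i-1} \times \{\mathrm{pt}\}$. Assembly is then a direct appeal to Lemma \ref{lem:HandleAttachment}, with $M'$, $\sd^r H^{(i)}$, $g$ and $h$ playing the roles stipulated there: the lemma produces a boundary-critical discrete Morse function $u$ on $\sd^r M$ whose critical interior cells are those of $g$ plus those of $h$, minus one $(d-1)$-face; the net effect is to add exactly one critical interior $(d-i)$-face, which is precisely the contribution of the $i$-handle to the mirror handle vector. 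The degenerate case $i=0$, where the new handle is disjoint from $M'$, is instead handled by Lemma \ref{lem:DisjointUnion}.

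The main obstacle is the construction of the perfect (boundary-critical) discrete Morse function on the interface $S^{i-1} \times \II^{d-i}$. While the Betti numbers force the existence of such a function in principle, realising it combinatorially on the triangulation inherited from an arbitrary PL handle decomposition requires carefully combining the product structure with the iterated-subdivision shelling machinery of \cite{AB-SSZ}; once this is in place, Lemma \ref{lem:HandleAttachment} does the bookkeeping and the induction closes up cleanly.
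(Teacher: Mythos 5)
Your overall skeleton matches the paper's: reduce to the boundary-critical statement via Corollary \ref{cor:dualityMe}, induct on the number of handles, treat a single handle via Lemma \ref{lem:ShellableSubdivision}, and close the inductive step with Lemma \ref{lem:HandleAttachment}, whose bookkeeping you carry out correctly (the net contribution of an $i$-handle is one critical interior $(d-i)$-face). The place where you diverge --- and where your argument has a genuine gap --- is the construction of the perfect boundary-critical discrete Morse function $h$ on the interface $M' \cap H^{(i)} \cong S^{i-1} \times \II^{d-i}$. You propose to build $h$ by dualizing a perfect discrete Morse function obtained from a product shelling of $S^{i-1} \times \II^{d-i}$ collapsing onto $S^{i-1} \times \{\mathrm{pt}\}$. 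But the triangulation of $M' \cap H^{(i)}$ handed to you by the handle decomposition is an \emph{arbitrary} PL triangulation of $S^{i-1}\times\II^{d-i}$, not a product complex; $S^{i-1}\times\{\mathrm{pt}\}$ is not a subcomplex of it, so Theorem \ref{thm:subdivision} cannot be invoked directly (it requires a subdivision of the given complex shelling to a subdivision of a target complex sitting inside it). Moreover, whatever subdivision you perform on the interface must be an iterated barycentric subdivision synchronized with those of $M'$ and $H^{(i)}$, so that $\sd^r M' \cap \sd^r H^{(i)} = \sd^r(M'\cap H^{(i)})$; your proposal leaves both the transfer and this synchronization as an acknowledged ``obstacle'' rather than resolving them.

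The paper's device for exactly this point is a \emph{double} induction, on the number of handles and on the dimension. The interface is a $(d-1)$-manifold which, after suitable barycentric subdivision, carries a PL handle decomposition into one $0$-handle and one $(i-1)$-handle (two $0$-handles when $i=1$); the theorem applied inductively in dimension $d-1$ then produces precisely the boundary-critical function $h$ with one critical interior $(d-1)$-face and one critical interior $(d-i)$-face, on $\sd^{s}(M'\cap H^{(i)})$ for some $s$. Taking $r$ to be the maximum of the subdivision depths needed for $M'$, for the handle, and for the interface then makes Lemma \ref{lem:HandleAttachment} applicable. If you add the dimension induction (with the $1$-dimensional base case, where the claim is immediate) and route the interface through it, your argument closes; as written, the interface step does not.
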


\begin{proof} 
By Corollary \ref{prp:dualityMe}, it suffices to prove the second claim. 
We proceed by double induction on the total number of handles and on the dimension of $M$. If $M$ is $1$-dimensional, then $M$ is either a path or a cycle, and  the claim is obvious. If $M$ has a handle decomposition into one handle only, then $M$ is a PL $d$-ball, and by Lemma \ref{lem:ShellableSubdivision} there is an $r$ such that $\sd^r M$ is shellable. In particular, $\sd^r M$ is 
endocollapsible (or equivalently, it admits a boundary-critical discrete Morse function with one critical interior $d$-face). 

So, assume that $d \ge 2$ and that $M$ has a handle decomposition into $\sum c_i$ handles, with  $\sum c_i \ge 2$. Let $B$ be the last handle in a handle decomposition of $M$, and let $j$ be the index of~$B$. Since $M$ is connected, we can assume $c_0 = 1$ and $j \ge 1$. 
Let $\textbf{c'}$ be the vector defined by $c'_i + \delta_{i,j} = c_i$ for all $i$, where $ \delta_{i,j}$ is the Kronecker delta. (In other words, $\textbf{c'}= \textbf{c} - \textbf{e}_j$.) Then $M$ decomposes as  
\[ M = M' \cup B,\] where $M'$ is a $d$-manifold with a handle decomposition into  $c'_i$ handles of index~$i$. We are going to apply the inductive assumption to the two manifolds $M'$ and $M' \cap B$: In fact, $M' \cap B$ has smaller dimension than $M$, and $M'$ decomposes into fewer handles than $M$.

  If $j=1$, then $M' \cap B$ is a disjoint union of two $(d-1)$-balls, and there is $s$ such that the $s$-th barycentric subdivision of both balls is endo-collapsible. Via Lemma \ref{lem:DisjointUnion}, this yields a boundary-critical discrete Morse function on $\sd^s (M' \cap B)$ with two critical interior $(d-1)$-faces. If instead $j > 1$, then $M' \cap B$ is a PL triangulation of $S^{j-1} \times \II^{d-j}$. As topological space,  $S^{j-1} \times \II^{d-j}$ admits a topological handle decomposition into one $0$-handle and one $(j-1)$-handle. Up to subdividing it barycentrically $k$ times, we can assume that $M' \cap B$ has a PL handle decomposition with $a_i$ handles of index~$i$, where $a_0 = 1$, $a_1 = \ldots = a_{j-2} = 0$, and $a_{j-1} = 1$. Since $M' \cap B$ is $(d-1)$-dimensional, by the inductive assumption there is an integer $s$ such that $\sd^{s} (M' \cap B)$ has a boundary-critical discrete Morse function $f$ with $a_{(d-1)-i}$ critical interior $i$-faces. Putting the two cases together, for any $j \ge 1$ the triangulation $\sd^{s} (M' \cap B)$ admits a boundary-critical discrete Morse function with one critical interior $(d-1)$-face, one critical interior $(d-j)$-face, and no other critical interior face. As for $M'$, by the inductive assumption we can find an integer $r'$ and a boundary-critical discrete Morse function $g$ on $\sd^{r'} M'$ with exactly $c'_{d-i}$ critical interior $i$-faces.  Moreover, by Lemma \ref{lem:ShellableSubdivision} there is an integer $t$ such that $\sd^t B$ is shellable.

We are now in the position of applying Lemma \ref{lem:HandleAttachment}. Set $r := \max(r',s+k,t)$. Let us subdivide $M$ barycentrically $r$ times. Then 
\[\sd^r M = \sd^r M' \cup \sd^r B,\] where $\sd^r B$ is shellable, $\sd^r M'$ admits a perfect boundary-critical discrete Morse function $g$, and the intersection $\sd^r M' \cap \sd^r B = \sd^r (M' \cap B)$ admits a perfect boundary-critical discrete Morse function $h$. 
By Lemma \ref{lem:HandleAttachment}, there is a boundary-critical discrete Morse function $u$ on $\sd^r M$, whose critical interior cells are exactly those of $h$ (namely, one $(d-1)$-face and one  $(d-j)$-face), minus one $(d-1)$-face, plus all of the critical interior $i$-cells of $g$ (which are $c'_{d-i}$). It follows that $u$ has exactly $c'_{d-i} + \delta_{i,j}$ critical interior faces of dimension~$i$. But $c'_{d-i} + \delta_{i,j}= c_{d-i}$.
\end{proof}

\smallskip

\begin{rem} \label{rem:gallaisgap} 
As we mentioned in the introduction, Gallais proved in \cite{Gallais} that every Morse vector is also a discrete Morse vector on some PL triangulation.  
The proof in \cite{Gallais} has a gap, due to the fact that it is unknown whether any simplicial subdivision $X_0$ of the simplex is collapsible \cite[Problem 4.18]{KirbyProblems}. (Specifically, it is not clear whether the triangulation $X$ constructed in \cite[p.~240]{Gallais}, which is a cone over the CW complex $X_0 \cup \Delta_d \cup (\mathbb{I} \times \, \partial \Delta_d)$, collapses down to its bottom $X_0$ as claimed, or not.)

The collapsibility of arbitrary subdivisions of the simplex (and more generally, of any convex polytope) is an old conjecture in PL topology, dating back to Hudson \cite[Sec.~2, p.~44]{Hudson}. There has been however recent progress on this: Adiprasito and the author have shown that if $X_0$ is an arbitrary subdivision of the $d$-simplex, the barycentric subdivision $\sd X_0$ is always collapsible \cite[Main Theorem VI]{AB-MG&C}. This allows to repair Gallais' proof, essentially by subdividing barycentrically the triangulation he considered. 
\end{rem}

\subsubsection*{Caveat: Many barycentric subdivisions might be required}
The previous method suggests that, for every PL triangulation $M$, there exists an integer $r = r(M)$ such that $\sd^r M$ admits perfect discrete Morse functions. This is true for spheres, but not for arbitrary manifolds: Many manifolds cannot admit perfect (smooth or discrete) Morse functions, for example if they have torsion in homology. (See also Corollary \ref{cor:li} below.) Moreover, even for spheres, there is no universal integer $r$ such that, for every PL triangulation $M$, $\sd^r M$ admits perfect discrete Morse function. 

\begin{prp}[Lickorish~\cite{Lickorish}, cf.~also Benedetti--Ziegler~\cite{BZ}]
\label{prp:uglyspheres}
For each integer $r \ge 0$ and for every integer $d \ge 3$, there is a PL $d$-sphere $S$ such that $\sd^r S$ minus any facet is not collapsible.
\end{prp}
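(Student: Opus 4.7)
The approach is knot-theoretic, using the classical obstruction of Lickorish \cite{Lickorish} in dimension three, and its refinement in \cite{BZ} for higher dimensions. For $d=3$, the plan is to construct, for each $r$, a PL $3$-sphere $S$ whose $1$-skeleton contains a knotted simple cycle $K$ that is ``so complicated'' relative to the initial size of $S$ that the knot-theoretic obstruction to collapsibility persists through $r$ iterated barycentric subdivisions of $S$.

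The key ingredient is a Lickorish-type lemma: if $B$ is a collapsible PL $3$-ball and $K\subset B^{(1)}$ is a knot, then the topological complexity of $K$ (measured, for instance, by the number of trefoil summands in its prime decomposition) is bounded above by a function of the number of $1$-simplices of $K$ in $B$. Under barycentric subdivision, each $1$-simplex of $K$ splits into two sub-edges, so after $r$ subdivisions the edge count of $K$ in $\sd^r S$ grows only by a factor $2^r$; meanwhile the isotopy class of $K$, and hence its topological invariants, are preserved. Taking $K$ to be a connected sum of $N$ trefoil knots (using Schubert's additivity for the relevant invariant), and choosing $N$ large enough in terms of $r$ and the initial edge count $e_0(K)$, forces the complexity of $K$ to exceed the bound permitted for $\sd^r S\setminus\Delta$, regardless of the facet $\Delta$ removed. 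One can arrange $N\ge c\cdot 2^r\cdot e_0(K)$ for a suitable constant $c$ by embedding the knot into the $1$-skeleton of any convenient initial triangulation of $S^3$.

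For $d>3$, take the $(d-3)$-fold suspension $\Sigma^{d-3} S$, which is a PL $d$-sphere containing $K$ in its equatorial $3$-subcomplex. If $\sd^r \Sigma^{d-3} S$ minus any top-dimensional facet were collapsible, then a transversality/restriction argument (collapses near the singular set restrict to collapses on the core) transports the collapse to (a subdivision of) $S\setminus\Delta$, contradicting the $3$-dimensional obstruction established above.

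The main obstacle, and the technical heart of the argument, lies in identifying a knot invariant that is both preserved under barycentric subdivision and whose upper bound coming from collapsibility scales in an $r$-sensitive, quantitatively controlled way. The naive use of bridge number is too weak, since bridge number of a polygonal knot is always linearly bounded by its stick number. The Benedetti--Ziegler framework \cite{BZ} bypasses this by tracking the number of trefoil factors that must survive any sequence of $r$ subdivisions followed by collapses, yielding a quantitative obstruction strong enough to beat the factor $2^r$ growth in the edge count of $K$.
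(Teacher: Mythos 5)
The paper does not actually prove this proposition: it is stated as a citation to Lickorish \cite{Lickorish} and to Benedetti--Ziegler \cite{BZ}, so your proposal is being measured against the cited argument rather than against a proof in the text. Your three-dimensional core is essentially that argument and is sound in outline: a Lickorish-type lemma bounds the complexity (bridge number, or equivalently the rank of the knot group) of any knot spanned by $m$ edges of the $1$-skeleton of a collapsible $3$-ball; barycentric subdivision multiplies the edge count by $2$ per step while preserving the knot type; and Schubert additivity makes the connected sum of $N$ trefoils exceed any such bound once $N \gg 2^r$. One remark: your worry that bridge number is ``too weak because it is linearly bounded by stick number'' is a red herring. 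The edge count of a knot in the $1$-skeleton of an abstract triangulation of $S^3$ is not a stick number in $\mathbb{R}^3$; by the Hachimori--Ziegler construction a $3$-edge subcomplex can realize \emph{any} knot type, so a bound of the form ``bridge number $\le$ linear in edge count'' is a genuine, non-vacuous obstruction, and it is the one actually used in \cite{BZ}.

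The genuine gap is the step $d>3$. You propose that if $\sd^r \Sigma^{d-3}S$ minus a facet were collapsible, a ``transversality/restriction argument'' would transport the collapse to a subdivision of $S$ minus a facet. No such principle exists: simplicial collapsibility does not pass to subcomplexes (the dunce hat sits inside collapsible complexes), and a collapsing sequence of the suspended sphere gives no collapsing sequence of the equatorial $3$-sphere. The way this is actually handled in \cite{BZ} is to make the \emph{obstruction}, not the collapse, survive suspension: the obstruction is the rank of $\pi_1$ of the complement of a subcomplex with few faces, and the complement of the suspended knot $\Sigma^{d-3}K$ in $\Sigma^{d-3}S$ has the same fundamental group as the complement of $K$ in $S$, while the number of faces of $\Sigma^{d-3}K$ (and of its $r$-th barycentric subdivision) is still controlled. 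The Lickorish-type lemma is then applied directly in dimension $d$, to the $(d-2)$-dimensional subcomplex $\Sigma^{d-3}K$, rather than by reducing to the three-dimensional case. As written, your higher-dimensional step does not go through without this replacement.
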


In fact, one can even show that for every positive integers $r$, $s$ and $d$, with $d \ge 3$, there is a PL $d$-sphere $S$ such that $\sd^r S$ does not admit any discrete Morse function with fewer that $s$ critical $(d-1)$-faces \cite{B-DMT4MWB}.

In particular, Theorem \ref{thm:PLtoDiscrete} is not algorithmically efficient, as one cannot bound how large $r$ can be. This is consistent with Novikov's famous result that for each $d \ge 5$, there cannot be an algorithm deciding whether a given simplicial complex is a  $d$-sphere or not. (Clearly, given a $d$-sphere $S$ and an integer $k$, there \emph{is} an algorithm to decide if $\sd^k S$ admits $(1, 0, \ldots, 0, 1)$ as discrete Morse vector: we could just try all possible Morse matchings. If it does admit it, then by Forman's sphere theorem $sd^k S$ is homeomorphic to the sphere, and so is $S$.) Using this one can prove the following: for each $d \ge 5$, there is no computable function $f_d : \mathbb{N} \rightarrow \mathbb{N}$ such that, for every PL $d$-sphere 
$S$ with $N$ facets, the $f_d(N)$-th barycentric subdivision of $S$ admits a perfect discrete Morse vector. 

\subsection{An application to 3-manifolds} \label{sec:heegaard}
A \emph{handlebody} is a thickened graph in $\R^3$. A \emph{genus-$g$ Heegaard splitting} of a $3$-manifold $M$ is a decomposition of $M$ into two handlebodies, glued together at a genus-$g$ surface $S_g$. The   \emph{Heegaard genus} of a $3$-manifold is the smallest $g$ for which the manifold has a genus-$g$ Heegaard splitting.

\begin{example}
The solid torus is the result of thickening a cycle; hence it is a handlebody. The complement of a solid torus inside the $3$-sphere is also a solid torus. So, the $3$-sphere has a genus-$1$ Heegaard splitting. It has also a genus-$0$ splitting: The complement of a ball inside the $3$-sphere, is a ball. So the Heegaard genus of the sphere is zero.
\end{example}

Heegaard splittings were introduced around 1900 by Heegaard, and later absorbed into the more general language of handle decompositions. In fact, every genus-$g$ handlebody can be viewed as the result of attaching exactly $g$ handles of index~$1$ to a handle of index~$0$. On the other hand, given a handle decomposition of a closed $3$-manifold with $1$ handle of index~$0$, $g$ handles of index~$1$, $g$ handles of index~$2$, and $1$ handle of index~$3$, this can be viewed as a Heegaard splitting: the $i$-handles with $i\le 1$ form the first handlebody, the $i$-handle with $ i \ge 2$ form the second ``dual'' handlebody. One can easily see that the two handlebodies are homeomorphic.

\begin{theorem} \label{prp:HeegaardMorse}
For a closed connected $3$-manifold, the following are equivalent:
\begin{compactenum}[\rm (i) ]
\item $M$ admits a genus-$g$ Heegaard splitting.
\item $M$ admits a handle decomposition with $c_i$ handles of index~$i$, where $(c_0, c_1, c_2, c_3) = (1,g,g,1)$.
\item $M$ admits a Morse function with $c_i$ critical points of index~$i$, where $(c_0, c_1, c_2, c_3) = (1,g,g,1)$.
\item Some triangulation of $M$ admits a discrete Morse function with $c_i$ critical $i$-faces, where $(c_0, c_1, c_2, c_3) = (1,g,g,1)$.
\end{compactenum}
\end{theorem}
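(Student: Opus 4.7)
\begin{proofp}
The plan is to close the loop \textrm{(i)} $\Leftrightarrow$ \textrm{(ii)} $\Leftrightarrow$ \textrm{(iii)} $\Leftrightarrow$ \textrm{(iv)} by invoking results that are already available: the equivalence \textrm{(ii)} $\Leftrightarrow$ \textrm{(iii)} will come from Theorem \ref{thm:smoothtoPL} (applied in dimension $3 \le 7$), and the equivalence \textrm{(ii)} $\Leftrightarrow$ \textrm{(iv)} will come from the main Theorem \ref{thm:EquivalenceOfMorseVectors} of this section. The only piece that has to be argued from scratch is the translation \textrm{(i)} $\Leftrightarrow$ \textrm{(ii)} between the language of Heegaard splittings and the language of handle decompositions.

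First I would prove \textrm{(i)} $\Leftrightarrow$ \textrm{(ii)}. Given a genus-$g$ Heegaard splitting $M = H \cup_{\Sigma_g} H'$, a standard picture presents a genus-$g$ handlebody $H$ as a $0$-handle with $g$ $1$-handles attached along a collar of $\partial(D^3) \cong S^2$; symmetrically, viewed from the opposite direction, $H'$ is the union of one $3$-handle and $g$ $2$-handles (a $k$-handle in $H$ becomes a $(3-k)$-handle in $H'$). Concatenating these two ordered handle decompositions and checking that the attaching submanifolds match across $\Sigma_g$ produces a handle decomposition of $M$ with vector $(1,g,g,1)$. Conversely, given such a handle decomposition, grouping the $0$- and $1$-handles and, respectively, the $2$- and $3$-handles yields two genus-$g$ handlebodies glued along $\Sigma_g$, i.e.\ a genus-$g$ Heegaard splitting.

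Next, \textrm{(ii)} $\Leftrightarrow$ \textrm{(iii)} follows from Theorem \ref{thm:smoothtoPL}: by Moise's theorem (Theorem \ref{thm:Perelman}) every $3$-manifold is PL, and in dimension $3 \le 7$ smooth Morse vectors coincide with (smooth and with) PL handle vectors. Finally, \textrm{(ii)} $\Leftrightarrow$ \textrm{(iv)} follows from Theorem \ref{thm:EquivalenceOfMorseVectors}: the set of PL handle vectors of $M$ coincides with the set of discrete Morse vectors realized on some PL triangulation of $M$, and since by Theorem \ref{thm:Perelman} every triangulation of the $3$-manifold $M$ is automatically PL, this is the same as the set of discrete Morse vectors realized on \emph{some} triangulation of $M$ (so the ambiguity between ``some PL triangulation'' and ``some triangulation'' that is present in higher dimensions, and witnessed by Example \ref{ex:AdiprasitoBenedetti} in dimension $5$, evaporates here). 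Chaining the three equivalences concludes the proof.

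I expect no genuine technical obstacle: the hard work has been done in Theorems \ref{thm:dmttopl} and \ref{thm:PLtoDiscrete}, and in the classical results of Section~1 on smooth versus PL handles. The only place where one has to be careful is the book-keeping in \textrm{(i)} $\Leftrightarrow$ \textrm{(ii)}, namely verifying that the Heegaard surface $\Sigma_g$ indeed plays the role of the attaching $S^{i-1}\times \mathbb{I}^{d-i}$ for the $2$-handles and that connectedness of $M$ forces one to use exactly one $0$-handle and one $3$-handle (so that no artificial $(c_0, c_3) \ne (1,1)$ vectors slip in).
\end{proofp}
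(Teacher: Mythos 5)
Your proposal is correct and follows essentially the same route as the paper: the (i) $\Leftrightarrow$ (ii) translation via grouping the $0$- and $1$-handles into one handlebody and the $2$- and $3$-handles into its dual, (ii) $\Leftrightarrow$ (iii) via the smooth/PL handle--Morse correspondence in low dimensions, and (ii) $\Leftrightarrow$ (iv) via Theorems \ref{thm:dmttopl} and \ref{thm:PLtoDiscrete} together with the fact that all triangulations of $3$-manifolds are PL. The only cosmetic difference is that you cite the aggregated statements (Theorems \ref{thm:smoothtoPL} and \ref{thm:EquivalenceOfMorseVectors}) where the paper cites their constituent ingredients directly.
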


\begin{proof}
For (i) $\Leftrightarrow$ (ii) see above; for (ii) $\Leftrightarrow$ (iii) see Section \ref{sec:MorseHandle}. The new part is the equivalence of (ii) and (iv), which is given by Theorems \ref{thm:dmttopl} and \ref{thm:PLtoDiscrete}, and the fact that all triangulations of $3$-manifolds are PL.
\end{proof}

\begin{cor} \label{cor:heegaard}
 The Heegaard genus $g(M)$ of a closed $3$-manifold $M$ is the smallest $g$ such that some triangulation of the manifold admits a discrete Morse function with discrete Morse vector ${\bf c}=(1,g,g,1)$. 
\end{cor}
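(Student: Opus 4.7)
The plan is to deduce this statement directly from Theorem \ref{prp:HeegaardMorse}, which already establishes, for each individual $g$, that $M$ admits a genus-$g$ Heegaard splitting if and only if some triangulation of $M$ carries a discrete Morse function with vector $(1,g,g,1)$. Since the equivalence holds value-by-value, taking the infimum over $g$ on both sides preserves the equality; this is the entire content of the corollary.

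Concretely, let $g^{\ast}(M)$ denote the smallest $g$ for which some triangulation of $M$ supports a discrete Morse function with Morse vector $(1,g,g,1)$. I would prove $g^{\ast}(M) = g(M)$ by a double inequality. For $g^{\ast}(M) \le g(M)$, start from a Heegaard splitting of minimal genus $g(M)$; the discussion preceding Theorem \ref{prp:HeegaardMorse} turns it into a PL handle decomposition with vector $(1, g(M), g(M), 1)$, and then Theorem \ref{thm:PLtoDiscrete} produces, on a sufficiently iterated barycentric subdivision of any PL triangulation of $M$, a discrete Morse function realizing this vector. For the reverse inequality $g(M) \le g^{\ast}(M)$, take a triangulation that carries a discrete Morse function with vector $(1, g^{\ast}(M), g^{\ast}(M), 1)$; Theorem \ref{thm:dmttopl} converts it (again after enough barycentric subdivisions) into a PL handle decomposition with the same vector, which splits into two genus-$g^{\ast}(M)$ handlebodies glued along a surface and therefore realizes a Heegaard splitting of that genus.

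There is essentially no obstacle: all the substantive work has been carried out in Theorems \ref{thm:dmttopl} and \ref{thm:PLtoDiscrete}, and dimension three is especially convenient because Theorem \ref{thm:Perelman} guarantees that every triangulation of a $3$-manifold is automatically PL. Consequently there is no need to restrict the infimum defining $g^{\ast}(M)$ to the PL category, and the corollary takes exactly the form stated.
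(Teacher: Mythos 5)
Your proposal is correct and follows exactly the paper's route: the corollary is an immediate consequence of Theorem \ref{prp:HeegaardMorse}, obtained by taking the minimum over $g$ on both sides of the equivalence (i)$\Leftrightarrow$(iv), with Theorem \ref{thm:Perelman} ensuring no PL restriction is needed. Your unpacking of the two inequalities via Theorems \ref{thm:PLtoDiscrete} and \ref{thm:dmttopl} simply re-traces the proof of Theorem \ref{prp:HeegaardMorse} itself, which the paper does not repeat.
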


This result has two consequences in discrete Morse theory. The first one is of algorithmic nature: Li's recent algorithm to compute the Heegaard genus of a manifold \cite{TaoLiAlgorithm} yields an algorithm to determine the best discrete Morse vector over all possible triangulations. The other way around, the heuristic by the author and Lutz to quickly compute some random discrete Morse vectors \cite{BenedettiLutz} yields a heuristic to obtain some Heegaard splittings quickly, albeit with no guarantee of minimality.

The second consequence of Corollary \ref{cor:heegaard} is of topological nature. The work by Boileau, Zieschang, Schultens, Weidmann and Li has revealed topological obstructions for the existence of Heegaard splittings of low genus. Recall that $\operatorname{rank}(M)$ is the minimal number of elements needed to generate $\pi_1(M)$. It is easy to see that $\operatorname{rank}(M) \le g(M)$. In the Sixties, Waldhausen conjectured that equality would hold for all $3$-manifolds. The conjecture was later disproved:

\begin{theorem}[Boileau--Zieschang \cite{BoileauZieschang}, Schultens--Weidmann \cite{SchultensWeidmann}, Li \cite{TaoLi}]
There are Seifert fibered spaces, graph manifolds, and hyperbolic $3$-manifolds with rank strictly smaller than the Heegaard genus. In fact, the distance between rank and genus can be arbitrarily high.
\end{theorem}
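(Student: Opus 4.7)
The plan is to prove the three assertions separately, by exhibiting, for each class (Seifert fibered, graph, hyperbolic), an explicit infinite family of $3$-manifolds $\{M_n\}$ along with an upper bound on $\operatorname{rank}(\pi_1 M_n)$ and a lower bound on the Heegaard genus $g(M_n)$ whose difference grows without bound. In every case the upper bound on the rank is the easy half: one writes down a small generating set of $\pi_1 M_n$ directly from the construction (Seifert presentation, graph-of-groups decomposition, or a presentation coming from the chosen Dehn filling). The technical substance lies in the corresponding lower bound on $g(M_n)$.

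For Seifert fibered spaces, I would follow Boileau--Zieschang and consider Seifert manifolds over $S^2$ with three exceptional fibers of large, carefully matched multiplicities. The fundamental group then admits one unexpected relation that collapses the naive generating set, so $\operatorname{rank}$ drops below the ``obvious'' count, while the vertical/horizontal classification of Heegaard surfaces in Seifert manifolds (Boileau--Otal, Moriah--Schultens) forces every Heegaard splitting to see the three exceptional fibers, yielding a genus lower bound in terms of the Seifert invariants. For graph manifolds, I would follow Schultens--Weidmann and amalgamate many such Seifert pieces along their boundary tori; Bass--Serre theory applied to the JSJ graph of groups keeps the rank small (generators of vertex groups can be shared along the edge groups $\mathbb{Z}^2$), while an analysis of how an incompressible Heegaard surface must meet the JSJ tori (via thin position or Scharlemann--Thompson untelescoping) forces the genus to grow linearly in the number of pieces. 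For hyperbolic manifolds, I would follow Li and perform Dehn filling on a suitably chosen link complement, or equivalently glue blocks with long geometric tubes; rank can be controlled by writing generators from the filling, whereas the genus lower bound is obtained by a sweepout / minimal surface argument that exploits injectivity radius and width estimates of the hyperbolic structure.

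To upgrade each individual construction to the statement that the gap $g(M_n)-\operatorname{rank}(M_n)$ is \emph{unbounded}, I would let the relevant combinatorial parameter (multiplicities of exceptional fibers, number of Seifert blocks in the JSJ graph, or filling slope lengths) tend to infinity, and show that the upper bound on $\operatorname{rank}$ is uniformly bounded while the lower bound on $g$ tends to $\infty$. In the graph-manifold case this is clean because both bounds are essentially linear in the number of blocks but with different slopes; in the hyperbolic case one uses Li's amalgamation of hyperbolic pieces with high distance Heegaard surfaces to guarantee additivity of genus up to a controlled error.

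The hard part will be the Heegaard genus lower bounds, since upper bounds on $\operatorname{rank}$ are obtained by exhibiting explicit generators. In each class, the lower bound requires nontrivial $3$-manifold topology: vertical/horizontal position for Seifert fibered Heegaard surfaces, compatibility of a Heegaard splitting with the JSJ decomposition for graph manifolds, and sweepout or thin-position arguments controlling how Heegaard surfaces interact with thick/thin decompositions of hyperbolic manifolds. Absent these deep inputs, one could at best recover examples with $\operatorname{rank}<g$ by a single unit; obtaining an \emph{arbitrarily large} gap truly depends on the fine topological machinery developed in the cited papers.
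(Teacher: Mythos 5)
The paper offers no proof of this statement at all: it is imported verbatim as a known theorem, with all content delegated to the three cited papers. So there is nothing internal to compare your argument against, and your proposal has to be judged on its own. As a survey of how Boileau--Zieschang, Schultens--Weidmann and Li actually argue, it is a reasonable road map, and you correctly identify where the weight sits (the Heegaard genus lower bounds, not the rank upper bounds). But it is a road map rather than a proof: every step that carries the load --- the vertical/horizontal classification of Heegaard surfaces in Seifert fibered spaces, the compatibility of splittings with the JSJ tori via untelescoping, the sweepout and minimal-surface width estimates in the hyperbolic case --- is invoked by name and never established. You concede this explicitly in your final paragraph. As a self-contained argument the proposal therefore reduces the theorem to the very results it is meant to prove, which is a genuine gap, even if an honest one; in the context of this paper the statement is meant to be a black-box citation, and treating it as such would have been the correct move.

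There is also one substantive inaccuracy. The Boileau--Zieschang examples (Seifert fibered over $S^2$ with three exceptional fibers) realize $g = \operatorname{rank}+1$ and no more; letting the multiplicities of the exceptional fibers tend to infinity will not make the gap grow, since for Seifert fibered spaces the discrepancy between rank and genus is at most one. The ``arbitrarily high'' clause of the theorem is carried entirely by Schultens--Weidmann (linear growth of genus versus sublinear growth of rank in the number of JSJ blocks) and by Li (hyperbolic examples via amalgamation along high-distance surfaces). Your write-up suggests all three constructions can be pushed to an unbounded gap, which blurs a distinction the theorem's attribution is careful to preserve.
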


\begin{cor} \label{cor:li} For every $g>0$, there is a $3$-manifold $M$ such that any discrete Morse function on any triangulation of $M$ has more than $g + \operatorname{rank} (M)$ critical edges.  
\end{cor}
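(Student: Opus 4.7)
The strategy is to show that, on a closed connected $3$-manifold $M$, every discrete Morse function gives $c_1 \ge g(M)$, and then invoke Li's theorem to find manifolds where $g(M)$ exceeds $\operatorname{rank}(M)$ by as much as we like.

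First I would reduce from discrete Morse theory to PL handle theory: since all triangulations of $3$-manifolds are PL (Theorem \ref{thm:Perelman}), Theorem \ref{thm:dmttopl} converts any discrete Morse function on any triangulation of $M$ with critical vector $(c_0,c_1,c_2,c_3)$ into a PL handle decomposition of $M$ (on a sufficiently iterated barycentric subdivision) with the same vector. Next, perform handle cancellations. Because $M$ is connected, the graph whose vertices are the $c_0$ $0$-handles and whose edges are the $c_1$ $1$-handles is connected, so it contains a spanning tree. Each of its $c_0-1$ tree edges is a $1$-handle whose attaching $S^0$ lands in two different $0$-handles, so it can be cancelled against a $0$-handle by the standard (combinatorial) PL $0/1$-move; iterating, we reach a decomposition with vector $(1,c_1-c_0+1,c_2,c_3)$. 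Dually $c_3-1$ of the $3$-handles may be cancelled with $2$-handles, giving $(1,\,c_1-c_0+1,\,c_2-c_3+1,\,1)$. The Euler-characteristic identity $c_0-c_1+c_2-c_3=0$ forces the two middle entries to agree, so this is a $(1,g',g',1)$ decomposition with
\[ g' \;=\; c_1-c_0+1 \;\le\; c_1.\]
By Theorem \ref{prp:HeegaardMorse} such a decomposition realizes a genus-$g'$ Heegaard splitting, hence $g(M)\le g' \le c_1$.

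Finally, invoke the theorem of Li (quoted just before the corollary): for every $g>0$ there is a closed $3$-manifold $M$ with $g(M) - \operatorname{rank}(M) > g$, i.e.\ $g(M) > g + \operatorname{rank}(M)$. Combining with the lower bound above, every discrete Morse function on every triangulation of this $M$ satisfies
\[ c_1 \;\ge\; g(M) \;>\; g + \operatorname{rank}(M),\]
which is what Corollary \ref{cor:li} claims.

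The only step that might worry the reader is the handle-cancellation argument: in general dimension, $i/(i+1)$-cancellations require the Whitney trick and may fail. However we only need $0/1$-cancellation (and, by turning the decomposition upside down, $2/3$-cancellation), and these are purely combinatorial PL operations that go through whenever the manifold is connected. So there is no dimension-three obstruction, and the proof is essentially an assembly of Theorem \ref{thm:dmttopl}, Theorem \ref{prp:HeegaardMorse}, elementary handle slides, and Li's theorem.
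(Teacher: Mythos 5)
Your proof is correct and follows the route the paper intends: the paper states Corollary \ref{cor:li} as an immediate consequence of Corollary \ref{cor:heegaard} together with Li's theorem, and your argument is exactly that combination. The one step you spell out that the paper leaves implicit --- cancelling $0/1$- and $2/3$-handle pairs to turn an arbitrary vector $(c_0,c_1,c_2,c_3)$ coming from Theorem \ref{thm:dmttopl} into a genus-$g'$ Heegaard splitting with $g'=c_1-c_0+1\le c_1$, whence $g(M)\le c_1$ --- is a genuine and correctly handled gap-filler, and your remark that only the unproblematic low-index cancellations are needed is on point.
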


On these manifolds, \emph{no matter how much we subdivide the triangulation}, no discrete Morse function will be sharp in reading off the homotopy.

\subsection{Non-PL discrete Morse vectors} \label{sec:karim}
Say that a topological manifold has both PL and non-PL triangulations. Suppose we want to find the smallest discrete Morse vector, namely, a triangulation with a discrete Morse function with the fewest possible number of critical cells. It is not clear whether we should look among PL triangulations, rather than among non-PL ones. Perhaps a simple observation to keep in mind is that cones are always collapsible; hence cones over non-PL balls yield examples of collapsible, non-PL triangulations. 
This indicates that non-PL objects might be ``competitive'' with PL ones, when it comes to having perfect discrete Morse functions. 

The following $5$-dimensional example illustrates a more bizarre possibility.

\begin{example} \label{ex:AdiprasitoBenedetti}
Adiprasito, Lutz and the author recently constructed a collapsible non-PL triangulation of some smooth contractible $5$-manifold \cite{randomDMT2}. This manifold is not homeomorphic to a ball, because the fundamental group of its boundary is the binary icosahedral group. Since the manifold is smooth, it admits PL triangulations by Theorem \ref{thm:pltr}, but none of these PL triangulations is collapsible. In fact, by Whitehead's theorem (Corollary \ref{cor:Whitehead}), all PL collapsible manifolds are homeomorphic to balls. 
\end{example}

Hence in the quest for an optimal discrete Morse function, in some cases it is better to look among \emph{non-PL} triangulations.
We proceed now to construct the first examples of a non-PL triangulation of the $5$-sphere that admits a perfect discrete Morse function. The idea for the following construction comes from Karim Adiprasito.

\begin{prp}\label{prp:forman}
For every $d\geq 5$, there exists a non-PL triangulation $B$ of the $d$-ball such that $B$ is collapsible and $\partial B$ is even shellable. In particular, $B$ is not PL, but its boundary is.
\end{prp}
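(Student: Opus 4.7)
The plan, for $d=5$, is to double the non-PL collapsible 5-manifold from Example~\ref{ex:AdiprasitoBenedetti} along its boundary and delete one facet; for $d>5$, to iterate suspensions.

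Let $W$ be the collapsible, non-PL triangulation of a smooth contractible 5-manifold from Example~\ref{ex:AdiprasitoBenedetti}, whose boundary $\partial W$ is a PL 4-manifold with $\pi_1(\partial W)$ the binary icosahedral group. Since $W$ is contractible, a standard Lefschetz-duality calculation shows that $\partial W$ is a homology 4-sphere. Form the double $DW := W \cup_{\partial W} W$. Van Kampen gives $\pi_1(DW)=0$, and Mayer--Vietoris (using $H_*(\partial W)\cong H_*(S^4)$) gives that $DW$ is a homology 5-sphere; Smale's theorem (Theorem~\ref{thm:poincaretop}) then yields $|DW|\cong S^5$. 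The non-PL vertex stars in the interior of $W$ survive in $DW$, so $DW$ is a non-PL triangulation of $S^5$. Picking a facet $\sigma$ in (say) the second copy $W_2$ of $W$, set
\[
B := DW \setminus \{\sigma\}.
\]
Topologically $B\cong D^5$, its boundary $\partial B=\partial\sigma$ is the boundary of a 5-simplex (a shellable PL 4-sphere), and $B$ is non-PL since it retains the non-PL vertex stars of $W$.

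The collapse of $B$ to a point will proceed in two stages: first collapse $W_2\setminus\{\sigma\}$ onto $\partial W$ using only interior faces of $W_2$ (this leaves $W_1$ untouched, since $W_1\cap W_2=\partial W$), and then collapse $W_1\cong W$ to a point by the collapsibility of $W$. The first stage requires $W$ to be not only collapsible but also \emph{endo-collapsible}, and this is the main technical obstacle: Corollary~\ref{cor:dualityMe} converts collapsibility into endo-collapsibility (after enough derived subdivisions) only in the PL category, and $W$ is non-PL. My plan is to adapt the dual-block construction underlying Proposition~\ref{prp:dualityMe} to $W$, exploiting the fact that the non-PL locus of $W$ lies on a small singular subcomplex away from which $W$ is PL: after a few further derived subdivisions, Theorem~\ref{thm:subdivision} should shell the dual blocks near the singular locus too, allowing the boundary-critical Morse function dual to the collapse of $W$ to be assembled. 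Alternatively, one can revisit the construction of $W$ itself and arrange endo-collapsibility directly.

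For $d>5$, take iterated suspensions $\Sigma^{d-5}B$. By induction on $k$, $\Sigma^k B$ is contractible with boundary $\Sigma^k\partial B\cong S^{4+k}$ simply connected, so Corollary~\ref{cor:pwb} makes $\Sigma^k B$ a topological $(5{+}k)$-ball; its boundary, a suspended shellable sphere, remains shellable; and it remains non-PL because suspending a non-PL complex produces non-PL vertex stars at the suspension apices. Suspension also preserves collapsibility via a standard matching: lift each collapse pair $(\sigma_i,\tau_i)$ of $B$ to $(x\ast\sigma_i,\,x\ast\tau_i)$ on the $x$-cone, pair the leftover $\{x\}$ with $\{x,p\}$ (where $p$ is the critical vertex of $B$), and finally collapse the $y$-cone in the standard cone-collapse order. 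Iterating this delivers a non-PL, collapsible $d$-ball with shellable boundary for every $d\ge 5$.
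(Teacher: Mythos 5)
Your construction stalls at exactly the step you flag, and unfortunately that step is not merely hard --- it is impossible. The first stage of your collapse requires $W_2\setminus\{\sigma\}$ to collapse onto $W_1\cap W_2=\partial W$, i.e.\ it requires $W$ (or some subdivision of it) to be \emph{endo-collapsible}. But Theorem \ref{thm:BallTheorem} of this paper states that every manifold with non-empty boundary admitting an endo-collapsible triangulation is homeomorphic to a ball, and $W$ was chosen precisely because it is \emph{not} a ball (its boundary has nontrivial fundamental group). So no amount of derived subdivision, and no redesign of the triangulation of $W$, can deliver the endo-collapsibility you need: both of your proposed fixes are ruled out. Your fallback idea of adapting the dual-block duality of Proposition \ref{prp:dualityMe} by ``shelling the dual blocks near the singular locus'' also cannot work for a structural reason: the dual blocks of the faces in the PL-singular set are cones over non-simply-connected homology spheres, so they are not homeomorphic to balls and no subdivision makes them shellable; this is exactly why Corollary \ref{cor:dualityMe} is restricted to the PL category. (The rest of your outline --- that $DW$ is a non-PL $S^5$, that $\partial(DW\setminus\sigma)=\partial\sigma$ is shellable, and the suspension step for $d>5$ --- is essentially sound, but it all hangs on the missing collapse.)

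The paper avoids this trap by building the ball in a completely different way: it constructs a non-PL $d$-sphere $C_1$ (a suspension of a coned-off Mazur-type manifold) whose PL-singular set $S$ is a \emph{tree}, and takes $B$ to be the second derived neighborhood $N^2(S,C_1)$. Collapsibility of $B$ is then free from Whitehead's regular-neighborhood lemma ($B$ collapses onto the collapsible tree $S$), sidestepping endo-collapsibility entirely, while $\partial B$ is identified with the double of the Mazur-type manifold, which is a PL sphere and hence shellable after subdivision. If you want to salvage a doubling-style approach, you would need an entirely different mechanism for collapsing $DW\setminus\{\sigma\}$ that does not factor through $\partial W$; I do not see one.
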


\begin{proof}
Let $M$ be a PL triangulation of a contractible $(d-1)$-manifold, such that $M$ is not homeomorphic to a ball and the double of $M$ is PL homeomorphic to the standard sphere. Such examples exist: For example, for $d-1=4$, one can take as $M$ an arbitrary triangulation of the Mazur manifold. (For $d-1\neq4$, the double of every contractible PL $(d-1)$-manifold is automatically a sphere by Theorem \ref{thm:poincare}.)

Let $H = \partial M$ be the boundary of $M$. This $H$ is a simplicial PL homology $d$-sphere. Since $M$ is not a ball, by Corollary \ref{cor:pwb} $H$ is not simply connected.  Let $p$ be a point not in $M$, and let $C_0$ denote the manifold obtained by gluing the cone $p \ast H$ to $M$ along their common boundary $H$. $C_0$ is not a manifold, but it is a homology-manifold, and has the same homology as a $d$-sphere. Furthermore $C_0$ is simply connected, since $H$ is connected. Consider the complex $C_1$, obtained as the suspension $\Sigma C_0$ of $C_0$. Clearly, $C_1$ is a homology manifold. Let us examine the links of the vertices of $C_1$. There are three cases to consider:
\begin{compactenum}[ (a) ]
\item If $v$ lies in $M$, then  $\Lk(v,C_0)$ is a sphere, hence $\Lk(v,C_1)=\Sigma\Lk(v,C_0) $ is a sphere as well. In particular, $\Lk(v,C_1)$ is simply connected.
\item If $v = p$ (the apex of the cone), $\Lk(v,C_1)$ is the suspension of $\Lk(v,C_0)$, which is equivalent to $H$ and hence connected. By Seifert--Van Kampen, $\Lk(v,C_1)$ is simply connected (although it is not a sphere).
\item If $v$ is one of the apices of the suspension $\Sigma C_0$, then $\Lk(v,C_1) = C_0$. In particular, although it is not a manifold, it is simply connected.
\end{compactenum}

Hence in all cases $\Lk(v,C_1)$ is simply connected. By the Edwards--Cannon criterion (Theorem \ref{thm:edwards}), $C_1$ is a manifold. It is not PL, though. In fact, if $a$ and $b$ are the apices of the suspension $\Sigma C_0$, then the PL-singular set $S$ of $C_1$ consists of the vertices $a$, $ b$ and $p$, and of the edges connecting $a$ to $p$ and $b$ to $p$.

Let us consider the submanifold $B$ of $\sd^2 C_1$ obtained as the union of all faces of $\sd^2 C_1$ intersecting $\sd^2 S$. Then $B$ collapses to $S$ (cf.\ \cite[Lemma III.9 and III.10]{Glaser}), and since $S$ is a tree, and in particular collapsible, so is $B$. Furthermore, the boundary of $B$ is PL homeomorphic to the double of the contractible manifold $M$, which is a PL sphere by assumption. Indeed, $\partial B$ is PL homeomorphic to the gluing of $\Lk(a,C_1)-p\cong M$ and $\Lk(b,C_1)-p\cong M$ along their common boundary $\Lk(p,C_1)-\{a\} - \{b\}\cong H$. 
In particular, by Corollary \ref{cor:pwb}, $B$ is a ball. However, $B$ is not a PL ball, since its PL singular set is nonempty. In conclusion, $B$ is a collapsible, non-PL ball, whose boundary is a PL sphere. By Lemma \ref{lem:ShellableSubdivision}, every PL sphere becomes shellable after sufficiently many iterated barycentric subdivisions. So, up to replacing $B$ with $\sd^r B$ (for a suitably large $r$), we can also assume that $\partial B$ is shellable. 
\end{proof}

\begin{cor} \label{cor:A}
For each $d \ge 5$, some non-PL $d$-spheres admit perfect discrete Morse functions.
\end{cor}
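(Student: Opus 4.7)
The plan is to glue the non-PL ball $B$ produced by Proposition \ref{prp:forman} to a second, shellable PL ball along their common boundary, and then assemble a perfect discrete Morse function from the two pieces. Concretely, let $v$ be a new vertex and set $B' := v \ast \partial B$. Since $\partial B$ is a shellable PL $(d-1)$-sphere, the cone $B'$ is a shellable PL $d$-ball with $\partial B' = \partial B$ (any shelling $\sigma_1, \ldots, \sigma_N$ of $\partial B$ lifts to the shelling $v \ast \sigma_1, \ldots, v \ast \sigma_N$ of $v \ast \partial B$). Define $S := B \cup_{\partial B} B'$. Then $S$ is a topological $d$-sphere, and it is non-PL: the PL-singular vertices of $B$ lie in the interior of $B$, so their links in $S$ agree with their links in $B$ and therefore fail to be PL spheres.

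Next, I would exhibit a perfect discrete Morse function on $S$ by constructing a collapse $S \setminus \Delta \searrow \{w\}$, where $\Delta$ is a chosen facet of $B'$ and $w$ is some vertex of $B$. Such a collapse translates into a discrete Morse function whose only critical faces are $\Delta$ and $w$, giving Morse vector $(1, 0, \ldots, 0, 1)$, which is perfect for a $d$-sphere. The collapse is assembled in two phases. First, since $B'$ is shellable and hence endocollapsible, $B' \setminus \Delta$ collapses onto $\partial B' = \partial B$; because every free-face removal in this phase involves interior faces of $B'$, the same sequence of elementary collapses remains valid inside $S$ and reduces $S \setminus \Delta$ to $B \cup \partial B = B$. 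Second, since $B$ is collapsible by Proposition \ref{prp:forman}, one follows with any collapse sequence $B \searrow \{w\}$.

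The main thing to verify, which I expect to be the only genuine subtlety, is that these two collapse phases truly concatenate into a single collapse of $S \setminus \Delta$. This reduces to checking that the free faces used in phase one lie strictly in the interior of $B'$, so that gluing with $B$ along $\partial B = \partial B'$ does not spoil their freeness, and that the free faces used in phase two lie in $B$ and are distinct from $\Delta$ (automatic, since $\Delta$ is a facet of $B'$ not contained in $B$). Both checks are immediate from the fact that the interiors of $B$ and $B'$ are disjoint. The resulting non-PL $d$-sphere $S$, together with its two-critical-face discrete Morse function, establishes the corollary for every $d \ge 5$.
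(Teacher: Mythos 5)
Your construction is exactly the paper's: $S=\partial(y\ast B)=B\cup(y\ast\partial B)$ with $B$ the collapsible non-PL ball of Proposition \ref{prp:forman}, followed by the endocollapse of the shellable cone onto $\partial B$ and then the collapse of $B$ to a point, yielding the discrete Morse vector $(1,0,\ldots,0,1)$. The argument is correct and essentially identical to the one in the paper; your extra remarks (that $S$ is non-PL because the PL-singular set of $B$ is interior, and that the two collapse phases concatenate because the free faces of the first phase are interior to $B'$) are accurate and only make explicit what the paper leaves implicit.
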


\begin{proof} Consider the non-PL collapsible $d$-ball $B$ with PL shellable boundary constructed in Proposition \ref{prp:forman}. Let $y$ be a vertex not in $B$, and let 
\[S \; = \; \partial (y \ast B) \; = \; B \, \cup \, (y \ast \, \partial B).\]
Since $\partial B$ is shellable, so is the ball $B' = (y \ast \, \partial B)$. Let $\Delta'$ be any facet of $B'$. Since $B'$ is shellable, $B'$ is also endocollapsible, so $B' - \Delta'$ collapses onto $\partial B = \partial B'$. The same collapsing sequence shows that $S - \Delta'$ collapses onto $B$. But $B$ is collapsible; so $S - \Delta'$ is collapsible.
\end{proof}

\begin{rem} \label{rem:WhiteheadStrengthened}
If $C$ is a collapsible complex, then $\sd C$ collapses to a facet of $\sd C$. Thus, we can modify the construction of Corollary \ref{cor:A} to obtain a non-PL $5$-sphere $S$ such that $S - \Delta'$ collapses onto another facet $\Delta''$. Let $A$ be the $5$-ball $S - \Delta''$. Then, with the same collapsing sequence, $A - \Delta'$ collapses onto $\partial \Delta'' = \partial A$. So $A$ yields an example of an endo-collapsible ball that is not PL. 
This proves that Theorem \ref{thm:BallTheorem} (``every endo-collapsible manifold is a ball'') is more general than Whitehead's result (Corollary \ref{cor:Whitehead}) that ``every collapsible PL manifold is a ball''.
\end{rem}

\subsection{LC triangulations and geometric connectivity} \label{sec:LC}
In this section we apply our previous results to show that every simply connected smooth $d$-manifold, with $d \ne 4$,  admits LC triangulations; in contrast, no triangulation of Mazur's simply connected $4$-manifold is LC.

By a {\em tree of $d$-simplices} we mean a triangulation of a $d$-ball whose dual graph is a tree. {\em Locally constructible} (shortly, LC) triangulations are those obtainable from some tree of $d$-simplices, with $d \ge 2$, by repeatedly identifying two adjacent $(d-1)$-simplices in the boundary \cite{BZ}. LC triangulations were introduced by Durhuus--Jonsson \cite{DJ}, who proved  the following result:

\begin{theorem}[Durhuus--Jonsson \cite{DJ}] \label{thm:DJ}
Every $3$-manifold admitting LC triangulations is homeomorphic to the $3$-sphere.
In higher dimensions, every manifold admitting LC triangulations is simply connected. 
\end{theorem}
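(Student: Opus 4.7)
The plan is a two-step argument. First, I would show that every LC triangulation of a closed $d$-manifold is simply connected for $d \ge 3$. Second, for $d=3$, I would combine this with the topological Poincar\'e conjecture (Theorem \ref{thm:poincaretop}) to upgrade ``simply connected'' to ``homeomorphic to $S^3$.''

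For the simply connected claim, I would induct on the number of boundary identifications used to build the LC triangulation. The base case is the starting tree of $d$-simplices, which is a $d$-ball and hence simply connected. For the inductive step, let $X$ be a simply connected complex obtained at some intermediate stage, and let $Y = X/(\sigma_1 \sim \sigma_2)$, where $\sigma_1, \sigma_2 \subset \partial X$ are two $(d-1)$-simplices sharing a $(d-2)$-face $\tau$, identified via the unique simplicial isomorphism $\phi\colon \sigma_1 \to \sigma_2$ that fixes $\tau$ pointwise (this is the condition built into the LC definition).

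To compute $\pi_1(Y)$, I would replace the identification with a mapping cylinder: form
\[Z \; = \; X \; \cup_{\sigma_1 \sqcup \sigma_2} \; \bigl( \sigma \times [0,1] \bigr),\]
where $\sigma$ is an abstract $(d-1)$-simplex, glued to $\sigma_1$ via $\sigma \times \{0\}$ and to $\sigma_2$ via $\sigma \times \{1\}$ using $\phi$. Then $Z$ deformation retracts onto $Y$, so $\pi_1(Y) \cong \pi_1(Z)$. Apply Van Kampen to the decomposition $Z = X \cup (\sigma \times [0,1])$: the intersection $\sigma_1 \sqcup \sigma_2$ has two simply connected components, connected to one another \emph{inside $X$} through $\tau$. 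Using the groupoid version of Van Kampen (or, equivalently, choosing basepoints in $\tau$ and a path through $\tau$ joining $\sigma_1$ to $\sigma_2$ inside $X$), the only potentially new $\pi_1$-generator of $Z$ is the loop that traverses the bridge $\sigma \times [0,1]$ and returns through $X$ via $\tau$. Because $\phi|_\tau = \mathrm{id}_\tau$, the bridge deformation retracts onto $\tau \times [0,1]$, which is contractible; hence this loop is null-homotopic and $\pi_1(Z) = \pi_1(X) = 1$. For $d = 3$, the resulting closed simply connected $3$-manifold is homeomorphic to $S^3$ by Theorem \ref{thm:poincaretop}.

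The hard part will be the Van Kampen step at each identification. Two technical issues must be addressed: first, the intersection $\sigma_1 \sqcup \sigma_2$ is disconnected, so one must invoke either the groupoid formulation of Van Kampen or a careful basepoint/generator analysis; second, one must verify rigorously that the bridge loop is killed, which uses in an essential way that $\phi$ fixes the shared face $\tau$ pointwise. The hypothesis $d \ge 3$ enters only by ensuring that $\tau$ is at least one-dimensional, hence path-connected — though the same argument shows that LC $2$-manifolds are also simply connected, recovering the sphere in that case as well.
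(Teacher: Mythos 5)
Your reduction to the double mapping cylinder is where the argument breaks. The claim that $Z = X \cup_{\sigma_1 \sqcup \sigma_2} (\sigma \times [0,1])$ deformation retracts onto (or is even homotopy equivalent to) the quotient $Y$ is false, precisely because the two attaching maps agree on $\tau$. For each $t \in \tau$ the fibre $\{t\}\times[0,1]$ has both endpoints glued to the \emph{same} point $t \in X$, so the image of the bridge in $Z$ is $\sigma\times[0,1]$ with $(t,0)$ identified to $(t,1)$ for all $t\in\tau$; this space is homotopy equivalent to a circle, not to the contractible $\tau\times[0,1]$. Consequently the ``bridge loop'' is \emph{not} null-homotopic in $Z$: one gets $\pi_1(Z)\cong\pi_1(X)\ast\mathbb{Z}$, the free factor being generated by exactly the loop you are trying to kill. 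A minimal counterexample already occurs in dimension two: let $X$ be a triangulated disk and fold two adjacent boundary edges together; the quotient $Y$ is again a disk, but $Z$ has Euler characteristic $0$ and $\pi_1(Z)\cong\mathbb{Z}$. (This is the standard discrepancy between a strict coequalizer and a homotopy coequalizer when the two maps being coequalized agree somewhere.)

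The theorem is true and your inductive skeleton is the right one; the repair is to avoid the cylinder and present $Y$ directly as the pushout of $X \hookleftarrow \sigma_1\cup\sigma_2 \xrightarrow{q} \sigma$, where $q$ is the fold identifying the two facets. Since $\sigma_1\cup\sigma_2\hookrightarrow X$ is a cofibration, this strict pushout is a homotopy pushout, and Seifert--van Kampen yields $\pi_1(Y)\cong\pi_1(X)\ast_{\pi_1(\sigma_1\cup\sigma_2)}\pi_1(\sigma)\cong\pi_1(X)$, using that $\sigma_1\cup\sigma_2$ is connected and simply connected. Connectedness of $\sigma_1\cup\sigma_2$ is exactly where adjacency enters: identifying two \emph{disjoint} boundary facets would attach a handle and contribute a free $\mathbb{Z}$ factor. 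Finally, deducing the $3$-dimensional statement from Theorem \ref{thm:poincaretop} does establish the assertion, but it is not Durhuus--Jonsson's route: their proof that LC closed $3$-manifolds are spheres is elementary, and the paper depends on this fact --- it is what makes Theorem \ref{thm:zv} for $d=3$ \emph{equivalent} to the Poincar\'e conjecture rather than a consequence of it.
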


Durhuus and Jonsson conjectured that every triangulation of the $3$-sphere is LC. They also wondered if perhaps every triangulation of every simply connected manifold is LC. As they noticed, a positive solution of both conjectures would have immediately implied (via Theorem \ref{thm:DJ}) the $3$-dimensional Poincar\'e conjecture, at the time still open. Perelman's work sparkled some optimism for Durhuus--Jonsson's conjectures but as it later turned out, not all triangulations of the $3$-sphere are LC.

\begin{theorem}[Benedetti--Ziegler \cite{BZ}] \label{thm:BZ} Let $d \ge 2$ be an integer. Let $S$ be an arbitrary triangulation of the $d$-sphere. $S$ is LC if and only if $S$ admits a discrete Morse function without critical $(d-1)$-faces. In particular, one can construct non-LC triangulations of the $d$-sphere for each $d \ge 3$.
\end{theorem}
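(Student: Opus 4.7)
The plan is to recast the Morse-theoretic characterization as a collapsibility statement and then identify that with the LC structure. The intermediate claim is: $S$ admits a discrete Morse function with $c_{d-1}=0$ if and only if, for some facet $\Delta$ of $S$, the ball $S-\Delta$ collapses onto a subcomplex of dimension $\le d-2$. Since $\beta_d(S)=1$, every discrete Morse function has at least one critical $d$-face, and we may optimize to $c_d=1$; the equivalence is then a direct application of Forman's theorem. Removing the unique critical $d$-face $\Delta$ and running the Morse matching produces the collapse onto the remaining critical cells (all of dimension $\le d-2$); conversely, any collapse of $S-\Delta$ onto a $(d-2)$-complex reassembles into a Morse function on $S$ with $c_{d-1}=0$ by pairing $\Delta$ with the first free $(d-1)$-face introduced in the collapsing sequence. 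The theorem thus reduces to proving: $S$ is LC if and only if some $S-\Delta$ collapses to a subcomplex of dimension $\le d-2$.

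For the forward direction, let $T$ be the tree of $d$-simplices underlying the LC construction of $S$, and let $\Delta$ be a leaf of $T$. The ball $T-\Delta$ collapses to a vertex along the dual tree via elementary $(d,d-1)$-pairs, each removed $(d-1)$-face being free on $\partial T$. I will execute these same elementary collapses inside $S-\Delta$, but in an order compatible with the LC gluings: by processing the leaves of $T$ in a schedule refining the reverse chronological order of the identifications, at the moment of each collapse the corresponding $(d-1)$-face of $S$ (now interior, due to the LC identification with a partner face whose $d$-neighbor has already been collapsed away) is free. What survives is a $(d-1)$-complex supported on the identification pairs, which collapses onto its $(d-2)$-skeleton via further $(d-1,d-2)$-pairs. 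Conversely, given a collapse of $S-\Delta$ onto a $(d-2)$-complex $K$, reading the Morse matching in reverse as successive attachments of $d$-simplices across their paired $(d-1)$-faces exhibits $S-\Delta$ as a tree of $d$-simplices whose dual tree is the matching; the remaining boundary $(d-1)$-faces (those in $K$) then pair up consistently under the identifications inherited from $S$ to realize the LC gluings completing the construction. The main obstacle is the scheduling argument in the forward direction, which requires a careful topological sort on the combined poset of the dual tree of $T$ and the LC gluing history, to ensure that every collapse step is locally admissible --- i.e.\ that the relevant $(d-1)$-face really has only one $d$-neighbor left at the moment we collapse it.

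For the final ``in particular'' claim, I will invoke a knot-theoretic obstruction. Lickorish exhibits triangulations of the $3$-sphere carrying a nontrivially knotted curve (a sufficiently iterated connected sum of trefoils) supported on only three edges; a classical theorem then shows that a $3$-ball containing such a knot as a subcomplex cannot collapse onto a $1$-dimensional subcomplex, because any such collapse would produce a spine of the knot complement contradicting the bridge number of the knot. Applied to $S-\Delta$ for any facet $\Delta$, this says that no $S-\Delta$ collapses to a $(d-2)=1$-complex, and hence by the main equivalence $S$ is not LC. For $d\ge 4$, analogous knot-in-codimension-two constructions (for instance by suspending Lickorish's examples and verifying that the $(d-2)$-dimensional collapsibility obstruction lifts to the obstruction in the higher-dimensional subcomplex containing the knot) produce non-LC $d$-sphere triangulations for every $d\ge 3$.
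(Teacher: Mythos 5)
The paper does not actually prove this statement: Theorem~\ref{thm:BZ} is imported verbatim from \cite{BZ}, so what you have written is an attempted reconstruction of the main theorem of that earlier paper rather than something to be checked against an in-text argument. Your overall architecture is the right one --- reduce ``discrete Morse function with $c_{d-1}=0$'' to ``$S-\Delta$ collapses onto a complex of dimension $\le d-2$'', then match that collapsibility statement with the LC structure, then obstruct it with knots --- but two steps are not proofs as written, and one is wrong as stated. The wrong step is the converse of your dictionary: if you pair $\Delta$ with the first free $(d-1)$-face of the collapsing sequence, that face is already matched with its unique cofacet in $S-\Delta$, and more fundamentally the resulting function would have \emph{no} critical $d$-face, which is impossible since $\beta_d(S)=1$. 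The correct construction leaves $\Delta$ critical and lets the collapse of $S-\Delta$ supply the matching for all faces of dimension $\ge d-1$; this is an easy fix, but the text as written does not compile into a valid Morse function.

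The genuine gap is in the forward direction of your main equivalence. Your scheduling argument ends with the assertion that, once all $d$-faces are gone, ``what survives is a $(d-1)$-complex supported on the identification pairs, which collapses onto its $(d-2)$-skeleton via further $(d-1,d-2)$-pairs.'' This is precisely the content of the theorem, not a formality: removing those $n$ leftover $(d-1)$-faces requires exhibiting, at each step, a $(d-2)$-face lying in only one surviving $(d-1)$-face, and the failure of exactly this kind of step is what the knotted examples exploit. In \cite{BZ} this is handled by induction on the number of LC gluings, with an inductive statement formulated for pseudomanifolds with boundary (collapsing onto the boundary union a $(d-2)$-complex) so that each identification can be absorbed one at a time; a ``topological sort'' on the dual tree alone does not see why the $(d-2)$-dimensional free faces exist. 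Similarly, in the knot-theoretic part the obstruction is not literally ``a spine contradicting the bridge number'': one must extract from a hypothetical collapse of $S-\Delta$ onto a $1$-complex a presentation of the knot group with a bounded number of generators (or relations), and then choose a connected sum of enough trefoils to violate that bound; and for $d\ge 4$ one must verify that the one-point suspensions carry a knotted $(d-2)$-subcomplex on few faces whose complement retains the same fundamental group. These are all true and all in \cite{BZ}, but in your write-up they are named rather than proved.
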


This was later extended in \cite{B-DMT4MWB} to manifolds with boundary, as follows. Let $M$ be an arbitrary  triangulated manifold with boundary. 
The \emph{collapse depth} $\cd M$ of $M$ is the maximal integer $k$ for which there exists a boundary-critical discrete Morse function on $M$ with one critical $d$-cell and no critical interior $(d-i)$-cells, for each $i \in \{1, \ldots, k - 1\}$. Obviously $1 \le \cd M \le \dim M$. We stress that the collapse depth is not a topological invariant: It depends on the triangulation chosen. 

\begin{theorem}[\cite{B-DMT4MWB}] \label{thm:cdepth2}
Let $M$ be a triangulation of a manifold with boundary. $M$ is LC if and only if $\cd M \ge 2$.
\end{theorem}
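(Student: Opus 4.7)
\medskip
\noindent\emph{Proof plan.} I would model the proof on the sphere case of Benedetti--Ziegler (Theorem \ref{thm:BZ}), with ordinary discrete Morse functions replaced by boundary-critical ones, and argue each implication by induction on the number of local identification steps. For the direction ``$M$ LC $\Rightarrow \cd M \ge 2$'', start with a tree of $d$-simplices $T$ and a sequence $T = M_0 \to M_1 \to \cdots \to M_k = M$ of local identifications. On $T$, the natural boundary-critical Morse function $f_0$ roots the tree at some $d$-simplex $\Delta$ and pairs every other $d$-simplex with the unique interior $(d-1)$-face gluing it to its parent; this produces exactly one critical interior $d$-face ($\Delta$) and no critical interior $(d-1)$-face. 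Then I would extend $f_i$ to $f_{i+1}$ step by step: the passage $M_i \to M_{i+1}$ fuses two boundary $(d-1)$-faces $\sigma', \sigma''$ sharing a $(d-2)$-face $\tau$ into one new interior $(d-1)$-face $\sigma_{\mathrm{new}}$, which must now be paired; pair it with $\tau$, or, if $\tau$ has already been used, with any remaining free $(d-2)$-face of $\sigma_{\mathrm{new}}$. Acyclicity is preserved because every newly matched face was critical at the previous stage.

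For the converse ``$\cd M \ge 2 \Rightarrow M$ LC'', begin with a boundary-critical discrete Morse function $f$ on $M$ having exactly one critical $d$-face $\Delta$ and no critical interior $(d-1)$-face. The matching attaches to each $d$-face $\Sigma \ne \Delta$ a unique ``parent'' $d$-face $\Sigma'$, namely the one on the opposite side of $\Sigma$'s matched $(d-1)$-face; acyclicity of $f$ forces this parent relation to be a tree $\mathcal{T}$ rooted at $\Delta$. Now I would ``unfold'' $M$ along all interior $(d-1)$-faces \emph{not} matched by $f$, obtaining a tree $T$ of $d$-simplices whose dual graph is $\mathcal{T}$, and recover $M$ from $T$ by identifying the torn pairs in a leaves-to-root order relative to $\mathcal{T}$. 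At every step of this identification sequence, the two $(d-1)$-faces to be glued should still meet at a $(d-2)$-face on the current boundary, so that each identification qualifies as local in the LC sense.

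The main obstacle is precisely this last locality claim: two $(d-1)$-faces sharing a $(d-2)$-face \emph{in $M$} need not share one in an arbitrary intermediate pseudo-manifold. To force it, I would use the manifold-with-boundary structure of $M$ (which bounds the combinatorics of the links of $(d-2)$-faces) together with the tree ordering on identifications, essentially transferring the bookkeeping of \cite{BZ} from closed spheres to the manifold-with-boundary setting while being careful never to disturb $\partial M$. A subsidiary concern in the forward direction is that several consecutive identification steps may wish to use the same $(d-2)$-face $\tau$; this is harmless because $\cd M \ge 2$ places no constraint on interior critical cells of dimension below $d-1$, so any $(d-2)$-face that cannot be paired is simply left critical.
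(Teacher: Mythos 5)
First, note that the paper does not actually prove Theorem \ref{thm:cdepth2}: it imports it from \cite{B-DMT4MWB}, where the argument runs through the equivalent collapse formulation (``$M$ is LC if and only if $M$ minus a facet collapses onto $\partial M$ together with a complex of dimension at most $d-2$''), adapting the combinatorial core of \cite{BZ} to the boundary-critical setting. Your plan correctly identifies that strategy, but it leaves exactly that combinatorial core unproved, and the patches you propose for the forward direction do not work as stated.

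Concretely, for ``LC $\Rightarrow \cd M \ge 2$'': the pair $(\tau,\sigma_{\mathrm{new}})$ is admissible only if $\tau$ is an \emph{interior} face of the final manifold $M$, since a boundary-critical function must leave every boundary face critical. But $\tau$ can perfectly well end up in $\partial M$ even though it served as the gluing ridge: the intermediate complexes of an LC construction are only pseudomanifolds, so the link of $\tau$ may consist of several arcs, the two identified facets may be endpoints of \emph{different} arcs, and then the identification merges two arcs into one without ever closing $\tau$'s link into a circle. The same phenomenon is what allows two distinct gluing steps to compete for the same $\tau$. Your fallback --- pair $\sigma_{\mathrm{new}}$ with ``any remaining free $(d-2)$-face'' --- presupposes that such a face exists; if every ridge of $\sigma_{\mathrm{new}}$ is either a boundary face of $M$ or already matched, then $\sigma_{\mathrm{new}}$ is forced to be a critical \emph{interior} $(d-1)$-face, which is precisely what $\cd M \ge 2$ forbids. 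So your closing remark that an unpairable face ``is simply left critical'' addresses the wrong face: it is the interior $(d-1)$-face, not the ridge, whose criticality must be excluded. Finally, ``acyclicity is preserved because every newly matched face was critical at the previous stage'' is not an argument --- adding a pair of previously critical faces is exactly how directed cycles arise. The proof in \cite{B-DMT4MWB} sidesteps all three issues simultaneously by exhibiting an actual collapsing sequence of $M-\Delta$ onto $\partial M \cup C$ with $\dim C \le d-2$ (the dual-tree collapse of all $d$-faces first, then the eliminations $(\sigma_{\mathrm{new}},\tau)$ in reverse order of the gluings), and the freeness of $\tau$ at its turn is the substantive lemma. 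Symmetrically, in the converse direction you correctly isolate the locality of each re-gluing as the crux, but ``transferring the bookkeeping of \cite{BZ}'' is a deferral, not a proof: that bookkeeping \emph{is} the theorem. As it stands, the proposal is a correct statement of intent with both key lemmas missing.
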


Theorem \ref{thm:BZ} was part of the author's PhD thesis. During the doctoral defense, \v{Z}ivaljevi\'c made the following insightful conjecture:

\begin{conjecture}[\Ziv $\!\!$]
Every simply connected smooth $d$-manifold admits \emph{some} LC triangulations. 
\end{conjecture}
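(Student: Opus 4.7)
\begin{proofp}
\textbf{Plan.} I would translate the conjecture into the language of discrete Morse theory using the machinery developed earlier in the paper, and then reduce it to a question of handle cancellation on smooth manifolds.

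First, by Theorem~\ref{thm:cdepth2}, a triangulation $M$ is LC if and only if $\cd M \ge 2$, i.e.\ $M$ admits a boundary-critical discrete Morse function with exactly one critical $d$-face and no critical interior $(d-1)$-face (the other critical cells being unconstrained). By Corollary~\ref{cor:dualityMe}, on some iterated barycentric subdivision, such a boundary-critical function with interior Morse vector $(c_0,\dots,c_{d-2},0,1)$ exists if and only if the dual vector $(1,0,c_{d-2},\dots,c_0)$ is realized by an ordinary discrete Morse function on that subdivision. In turn, by Theorem~\ref{thm:PLtoDiscrete}, any PL handle decomposition with a single $0$-handle and no $1$-handle upgrades, after enough barycentric subdivisions of any chosen PL triangulation, to a discrete Morse function with the same vector. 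Thus the task reduces to: every simply connected smooth $d$-manifold admits a PL handle decomposition with $c_0=1$ and $c_1=0$.

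Next, I would feed this reduced problem into smooth handle cancellation, and then transfer to the PL category via Theorem~\ref{thm:pltr} and Remark~\ref{rem:StoPLhandle}. For $d=3$, Perelman's Theorem~\ref{thm:poincare} reduces the closed case to $S^3$, which carries the trivial decomposition $(1,0,0,1)$; a simply connected 3-manifold with boundary is a 3-ball, again trivially decomposed. For $d\ge 5$, Smale's classical cancellation lemma allows one to eliminate every 1-handle in a smooth Morse decomposition of a closed simply connected smooth manifold by pairing it off against a 2-handle, using that $\pi_1 = 0$ lets one slide the attaching sphere of a 1-handle onto the geometric dual of a 2-handle. For smooth $d$-manifolds with boundary in the same range (and with sufficiently simple boundary $\pi_1$), a relative version of the same cancellation, performed from the interior end, again achieves $c_0=1$, $c_1=0$. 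In both subcases the resulting smooth handle decomposition becomes PL by Remark~\ref{rem:StoPLhandle}, closing the chain of reductions.

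The main obstacle is dimension four. Handle cancellation in dimension 4 is notoriously delicate: pairing a 1-handle with a 2-handle requires a \emph{smoothly} embedded dual 2-sphere, and even for closed simply connected smooth 4-manifolds this is not always available --- the obstruction is exactly the phenomenon captured by Akbulut corks and by the still-open smooth Poincar\'e conjecture in dimension four (cf.\ Problem~\ref{conj:SPC}). The situation worsens for simply connected 4-manifolds with non-simply-connected boundary, where further $\pi_1$-obstructions concentrated on $\partial M$ appear and interact with the constraints coming from Theorem~\ref{thm:DJ}. I would therefore expect the bulk of the work to live in dimension four: one must either supply a direct combinatorial construction of LC triangulations for all difficult contractible 4-manifolds (such as Mazur's), or identify a genuine obstruction, in which case the conjecture would need to be revised rather than proved as stated.
\end{proofp}
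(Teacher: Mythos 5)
Your reduction is the right one and matches the paper's: LC is equivalent to $\cd \ge 2$ (Theorem~\ref{thm:cdepth2}), and via Theorem~\ref{thm:PLtoDiscrete} and Corollary~\ref{cor:dualityMe} this reduces to the manifold admitting a PL handle decomposition with one $0$-handle and no $1$-handles --- this is exactly Proposition~\ref{prp:wall2}. But from there the proposal has three gaps. First and most importantly, you leave dimension four open, whereas the conjecture is actually \emph{false} there and the paper resolves it: Casson proved that every PL handle decomposition of Mazur's contractible $4$-manifold must contain $1$-handles (cf.~\cite[Problem 4.18]{KirbyProblems}), and since every triangulation of a $4$-manifold is PL (Theorem~\ref{thm:Perelman}), Proposition~\ref{prp:wall2}(2) forces every triangulation of it to have collapse depth $\le 1$, hence none is LC. You correctly sense that $d=4$ is where the difficulty concentrates and even name the Mazur manifold, but without Casson's theorem you cannot close the case in either direction; a reviewer would count this as the missing half of the result.

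Second, for $d\ge 5$ with boundary your appeal to "a relative version of the same cancellation \ldots with sufficiently simple boundary $\pi_1$" does not cover all simply connected manifolds with boundary; the paper instead invokes Wall's geometrical connectivity theorem (Theorem~\ref{thm:wall}), which states outright that every $p$-connected PL $d$-manifold, \emph{with or without boundary}, is geometrically $p$-connected whenever $p\le d-4$ --- this is the clean, uniform substitute for ad hoc handle trading and needs no hypothesis on $\partial M$. Third, your $d=3$ boundary case is wrong as stated: a simply connected $3$-manifold with boundary need not be a ball (remove two disjoint open balls from $S^3$). The paper handles this by showing, via Perelman, that such a manifold is a "Swiss cheese" (a $3$-sphere minus $b$ disjoint balls) and then builds the LC structure by induction on $b$, splitting off a ball along an annulus and gluing LC pieces along a strongly connected $2$-complex (\cite[Lemma~2.23]{BZ}). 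None of these gaps is a wrong idea so much as a missing input, but each is needed to turn the outline into a proof of what is actually true, namely Theorem~\ref{thm:zv}.
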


To tackle the conjecture, let us recall a classical result in PL topology. Given an integer $p$ in $\{1, \ldots, d\}$, a 
\emph{geometrically $p$-connected} manifold is a manifold that admits a PL handle decomposition with one $0$-handle and no further handles of index $\le p$ \cite{Wall}. It is easy to see that geometri\-cally $p$-connected implies $p$-connected. In the Sixties, Wall discovered a surprising converse implication: 

\begin{theorem}[Wall~\cite{Wall}] \label{thm:wall}
Let $M$ be a $p$-connected PL $d$-manifold, with or without boundary. If $p \le d-4$, then $M$ is also geometrically $p$-connected.
\end{theorem}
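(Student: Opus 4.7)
My plan is to start from an arbitrary PL handle decomposition of $M$ (which exists by Corollary \ref{PLtoPLhandle}) and simplify it by successively eliminating low-dimensional handles, using two classical PL tools: \emph{handle cancellation} (if the attaching sphere of an $(i+1)$-handle meets the belt sphere of an $i$-handle transversally in a single point, the pair can be cancelled) and \emph{handle trading} (under appropriate dimension and connectivity hypotheses, an $i$-handle whose attaching sphere is null-homotopic in $M$ can be replaced by an $(i+2)$-handle). The goal is to reach a handle decomposition with one $0$-handle and no handles of index in $\{1,\dots,p\}$.

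\textbf{Step 1 --- down to a single 0-handle.} Since $M$ is connected, any two $0$-handles can be joined by a path crossing some $1$-handle; applying handle cancellation to such a $0/1$-pair reduces the total number of $0$-handles by one. Iterating, we may assume the decomposition has exactly one $0$-handle $H_0$, so that $M_0 = H_0$ is a PL $d$-ball with $\partial M_0 \cong S^{d-1}$.

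\textbf{Step 2 --- induction on $i$.} Assume inductively that, for some $1 \le i \le p$, we have a PL handle decomposition with a single $0$-handle and no $j$-handles for $1 \le j < i$; I want to remove all $i$-handles as well. Let $H$ be an $i$-handle in the current decomposition; its attaching sphere $\alpha \cong S^{i-1}$ sits in $\partial M_{i-1}$. Because $M$ is $p$-connected and $i-1 \le p - 1$, the map $\alpha \hookrightarrow \partial M_{i-1} \hookrightarrow M$ is null-homotopic. Hence $\alpha$ bounds a singular $i$-disk in $M$; by PL general position, the hypothesis $i \le p \le d-4$ gives $2i \le 2(d-4) < d$, so the singular disk can be made into an embedded PL $i$-disk $D \subset M$ meeting $\partial M_{i-1}$ transversally along its boundary. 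Gluing $D$ to the core of $H$ yields an embedded PL $i$-sphere $\Sigma \subset M$. A regular neighborhood of $\Sigma$ rel $M_{i-1}$ can be interpreted as attaching an $(i+1)$-handle $H'$ onto $M_{i-1} \cup H$ whose attaching sphere meets the belt sphere of $H$ transversally in a single point; handle cancellation of $H$ against $H'$ removes the $i$-handle. What is introduced in exchange, by dualizing the regular neighborhood construction, is an $(i+2)$-handle (which is permitted since $i+2 \le p+2 \le d-2$). Applying this finitely many times eliminates all $i$-handles, completing the inductive step.

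\textbf{Step 3 --- conclusion.} After running the induction for $i = 1, 2, \ldots, p$, we obtain a PL handle decomposition of $M$ with exactly one $0$-handle and no handles of index in $\{1, \ldots, p\}$, which is the definition of geometric $p$-connectedness.

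\textbf{Main obstacle.} The genuinely delicate point is the handle-trading step: one must (a) realize the null-homotopy of $\alpha$ by an \emph{embedded} PL disk $D$ in $M$, and (b) argue that the regular neighborhood of $\Sigma = D \cup (\text{core})$ produces a canceling $(i+1)$-handle together with a single additional $(i+2)$-handle, with the right framings (using orientability of $M$). The range $p \le d-4$ is precisely what is needed to run PL general position for step (a) and to have sufficient codimension for step (b) --- this is the PL analogue of the Whitney trick and is where the codimension-four hypothesis of the statement is essential. Everything else reduces to bookkeeping of handle decompositions as in Section~1.10 of the paper.
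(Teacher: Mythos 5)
The paper does not actually prove this statement: it is quoted verbatim from Wall's \emph{Geometrical connectivity I \& II} \cite{Wall}, so there is no in-paper argument to compare yours against. Judged on its own terms, your proposal follows the right general template (normalize to a single $0$-handle, then trade $i$-handles for $(i+2)$-handles for $i=1,\dots,p$), and Step 1 is fine. But there is a genuine gap at exactly the point you flag as the main obstacle.

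You claim that $i \le p \le d-4$ gives $2i \le 2(d-4) < d$, so that general position embeds the null-homotopy disk. The inequality $2(d-4)<d$ is equivalent to $d<8$. For $d\ge 8$ the theorem allows $i$ up to $d-4\ge d/2$, and there a generic PL map of an $i$-disk into a $d$-manifold has a double-point set of dimension $2i-d\ge 0$: general position does \emph{not} yield an embedded disk (take $d=10$, $p=i=6$). This is precisely the regime where the theorem has real content. Above the middle dimension one cannot embed the core-capping disk directly; instead one must convert the algebraic fact that the class of the $i$-handle dies in $\pi_i(M)$ (hence lies in the subgroup generated by the attaching spheres of the $(i+1)$-handles) into geometric cancellation, via handle additions and the Whitney trick --- or via engulfing --- using simple connectivity of the relevant boundaries and the codimension-$\ge 3$ position of the belt spheres. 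That, not general position on the disk, is where the hypothesis $p\le d-4$ earns its keep, and it is the actual content of Wall's paper. As written, your argument proves the theorem only for $d\le 7$ (which, to be fair, covers the one instance the present paper uses, namely $p=1$, where $2i=2<d$ always holds). There are also smaller repairable issues: $D$ must be kept disjoint from the interiors of $M_{i-1}$ and of the other handles, not merely transverse to $\partial M_{i-1}$; and the assertion that a regular neighborhood of $\Sigma$ produces exactly one cancelling $(i+1)$-handle plus one new $(i+2)$-handle needs the standard ``introduce a trivial $(i+1,i+2)$-pair and slide'' bookkeeping together with a framing check. But the general-position step is the one that cannot be salvaged in the stated range.
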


The bound $p \le d-4$ is best possible: In fact, Mazur described a contrac\-tible $4$-manifold different than the $4$-ball \cite{Mazur}, and Casson later proved that every PL handle decomposition of Mazur's manifold must contain $1$-handles, cf.~\cite[Problem 4.18]{KirbyProblems}. Hence Mazur's $4$-manifold is $1$-connected but not geometrically. However, every $(d-3)$-connected $d$-manifold becomes geometrically $(d-3)$-connected after sufficiently many ``stabilizations'', cf.~Quinn~\cite[Theorem~1.2]{Quinn}. 
Our next result shows that geometric connectivity and collapse depth (of a suitable subdivision) are essentially the same invariant, up to a shift of one:

\begin{prp} \label{prp:wall2}
Let $M$ be any PL triangulation of a given manifold.
\begin{compactenum}[\rm (1)]
\item If $M$ is 
geometrically $p$-connected, for $r$ large enough $\cd \sd^r M \ge p+1$.
\item If $\,\cd M \ge p + 1$, then $M$ is geometrically $p$-connected.
\end{compactenum}
\end{prp}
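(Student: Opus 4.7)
The plan is to deduce both directions of Proposition~\ref{prp:wall2} from three tools that have just been established: Theorem~\ref{thm:PLtoDiscrete} (PL handles give a boundary-critical discrete Morse function after enough subdivisions), Corollary~\ref{cor:dualityMe} (duality between discrete Morse functions and boundary-critical ones), and Theorem~\ref{thm:dmttopl} (discrete Morse functions on PL triangulations give PL handle decompositions after enough subdivisions). The key observation tying the two statements together is that ``geometric $p$-connectivity'' is a property of the underlying PL manifold, invariant under subdivision, whereas $\cd$ depends on the triangulation; this is why one direction (subdivide) needs $\sd^r M$ and the other (any triangulation) does not.

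For part (1), assume the manifold is geometrically $p$-connected. By definition it admits a PL handle decomposition with handle vector $(c_0,\ldots,c_d)$ satisfying $c_0=1$ and $c_1=\cdots=c_p=0$. I would apply Theorem~\ref{thm:PLtoDiscrete} to this decomposition: for $r$ large enough, $\sd^r M$ admits a boundary-critical discrete Morse function $f$ with $c_{d-i}$ critical interior $i$-faces. Our assumption on the handle vector translates exactly into saying that $f$ has one critical interior $d$-face and no critical interior $(d-i)$-faces for $i\in\{1,\ldots,p\}$. By the very definition of $\cd$, this gives $\cd\sd^r M\ge p+1$.

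For part (2), suppose $\cd M \ge p+1$, so $M$ carries a boundary-critical discrete Morse function $f$ whose interior Morse vector has a $1$ in position $d$ and zeros in positions $d-1,\ldots,d-p$. I would then apply Corollary~\ref{cor:dualityMe}: on a suitable iterated barycentric subdivision $\sd^s M$ one obtains an ordinary discrete Morse function whose Morse vector is the mirror image, namely one critical $0$-face and no critical $i$-faces for $i\in\{1,\ldots,p\}$. Since $\sd^s M$ is again PL, Theorem~\ref{thm:dmttopl} now produces some further integer $r$ such that $\sd^{r+s} M$ admits a PL handle decomposition with $c_0=1$ and $c_1=\cdots=c_p=0$. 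Because geometric $p$-connectivity is a topological invariant of the underlying PL manifold (it asserts the existence of some PL handle decomposition), this proves that the manifold is geometrically $p$-connected.

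I do not expect a genuine obstacle here: both arguments are bookkeeping on top of the duality dictionary established in the previous subsections. The only subtle point to keep in mind is that in part (2) we must absorb two separate barycentric subdivisions (one from duality, one from Theorem~\ref{thm:dmttopl}), but this is harmless since subdivision does not change the PL-manifold type of $M$ and geometric $p$-connectivity is formulated at that level.
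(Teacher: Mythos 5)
Your proof is correct and follows essentially the same route as the paper: Theorem~\ref{thm:PLtoDiscrete} for part (1), and Corollary~\ref{cor:dualityMe} followed by Theorem~\ref{thm:dmttopl} for part (2). The only cosmetic difference is that in part (1) you invoke the boundary-critical clause of Theorem~\ref{thm:PLtoDiscrete} directly, whereas the paper first extracts the ordinary discrete Morse vector and then dualizes via Corollary~\ref{cor:dualityMe} --- the two are interchangeable since that theorem's two conclusions are themselves related by the same duality.
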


\begin{proof}
\begin{compactenum}[(1)]
\item By definition, $M$ has a PL handle vector of the form $(1, 0, \ldots, 0, c_{p+1}, \ldots, c_{d})$. By Theorem \ref{thm:PLtoDiscrete}, some iterated barycentric subdivision of $M$ admits $(1, 0, \ldots, 0, c_{p+1}, \ldots, c_{d})$ as discrete Morse vector. 
By Corollary \ref{cor:dualityMe}, $(c_d, \ldots, c_{p+1}, 0, \ldots, 0, 1)$ is an interior discrete Morse vector on some iterated barycentric subdivision $M'$ of $M$. Hence the collapse depth of $M'$ is at least $p+1$.
\item If $\cd M \ge p + 1$, we can find on $M$ a boundary-critical discrete Morse function with one critical $d$-cell and no critical interior $(d-i)$-cells, for each $i \in \{1, \ldots, p\}$. By  Corollary \ref{cor:dualityMe}, for some $r$ the subdivision $\sd^r M$ admits a discrete Morse function without critical $i$-faces, for $1 \le i \le p$.  
By Theorem \ref{thm:dmttopl},  $M$ is geometrically $p$-connected. \qedhere
\end{compactenum}
\end{proof}

\begin{theorem}\label{thm:zv} Let $d \ge 2$. Every PL triangulation of every simply connected PL $d$-manifold, $d \ne 4$, becomes LC after subdividing it barycentrically suitably many times. In contrast, the Mazur $4$-manifold admits no LC triangulation.
\end{theorem}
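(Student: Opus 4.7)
The plan is to reduce the statement to geometric connectivity via the collapse depth invariant introduced before Theorem \ref{thm:cdepth2}, and then invoke Wall's Theorem \ref{thm:wall} for high dimensions and classical classification results for low dimensions.

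The key dictionary is as follows. By Theorem \ref{thm:cdepth2}, a PL triangulation $T$ is LC if and only if $\cd T \ge 2$. By Proposition \ref{prp:wall2}(1), if the underlying PL manifold is geometrically $1$-connected then some iterated barycentric subdivision of any given PL triangulation has collapse depth $\ge 2$. So, to settle the positive part of the statement, it suffices to show that every simply connected PL $d$-manifold, $d \ne 4$, is geometrically $1$-connected.

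First I would split into cases by dimension. For $d \ge 5$, this is immediate from Wall's Theorem \ref{thm:wall} applied with $p = 1$, since simply connected is precisely $1$-connected and the constraint $p \le d - 4$ becomes $d \ge 5$. For $d = 3$, I would use the (topological) Poincar\'e conjecture and standard $3$-manifold topology: any simply connected compact $3$-manifold has boundary a disjoint union of $2$-spheres, and filling those in produces $S^3$; hence such a manifold is $S^3$ minus finitely many open balls, which plainly admits a PL handle decomposition using only one $0$-handle, some $2$-handles (dual to the removed balls) and a $3$-handle, hence is geometrically $1$-connected. For $d = 2$, a simply connected surface is $S^2$ or $D^2$, for which the statement is trivial. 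Combining with Proposition \ref{prp:wall2}(1), this gives the positive part: for any PL triangulation $T$ and $r$ large enough, $\cd \sd^r T \ge 2$, hence $\sd^r T$ is LC by Theorem \ref{thm:cdepth2}.

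For the Mazur manifold $M_{\mathrm{Maz}}$, the argument runs in reverse. Since $\dim M_{\mathrm{Maz}} = 4$, Theorem \ref{thm:Perelman} ensures \emph{all} triangulations of $M_{\mathrm{Maz}}$ are PL, so we do not have to worry about non-PL triangulations. Suppose for contradiction that some triangulation $T$ of $M_{\mathrm{Maz}}$ is LC. Then by Theorem \ref{thm:cdepth2} we would have $\cd T \ge 2$, and Proposition \ref{prp:wall2}(2) would then force $M_{\mathrm{Maz}}$ to be geometrically $1$-connected. But Casson's result (cited in the text around \cite{KirbyProblems}) says precisely that every PL handle decomposition of the Mazur manifold must contain $1$-handles, i.e., $M_{\mathrm{Maz}}$ is \emph{not} geometrically $1$-connected. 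This contradiction completes the proof.

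The only real content is bundled into Wall's theorem and Casson's obstruction, which I am taking as given; the main obstacle in writing this up cleanly is making sure the low-dimensional cases ($d = 2, 3$) are treated, since Wall's hypothesis $p \le d - 4$ excludes them, and the reader has to be convinced that in those dimensions the simply connected PL manifolds really do admit handle decompositions without $1$-handles. In dimension $3$ this is where Perelman's resolution of the Poincar\'e conjecture (Theorem \ref{thm:poincare}) is silently invoked.
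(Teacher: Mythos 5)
Your proposal is correct, and for $d\ge 5$, $d=2$, and the Mazur manifold it coincides with the paper's argument (Wall's Theorem \ref{thm:wall} plus Proposition \ref{prp:wall2}(1) and Theorem \ref{thm:cdepth2} in one direction; Casson's obstruction plus Theorem \ref{thm:Perelman}, Proposition \ref{prp:wall2}(2) and Theorem \ref{thm:cdepth2} in the other). Where you genuinely diverge is the case $d=3$ with non-empty boundary. You identify the manifold as $S^3$ minus $b$ disjoint open balls, exhibit a handle decomposition with no $1$-handles, and then feed geometric $1$-connectivity into the same machine (Proposition \ref{prp:wall2}(1)) used for $d\ge 5$; this makes the proof pleasantly uniform across dimensions, at the cost of invoking the full strength of Theorem \ref{thm:PLtoDiscrete} and the duality Corollary \ref{cor:dualityMe} in dimension $3$. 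The paper instead handles this case by a direct induction on the number $b$ of boundary spheres: it splits the ``Swiss cheese'' along an embedded annulus into a $3$-ball and a piece with one fewer hole, makes each piece shellable (hence LC) after enough barycentric subdivisions via \cite{AB-SSZ}, and glues the LC structures with \cite[Lemma~2.23]{BZ} --- a more elementary, hands-on route that never mentions geometric connectivity in dimension $3$. Two small points to tidy in your write-up: the handle decomposition of $S^3$ minus $b$ open balls should be one $0$-handle and $b-1$ $2$-handles with \emph{no} $3$-handle (a $3$-handle would cap off one of the boundary spheres; the $3$-handle belongs only to the closed case $b=0$), and you should say a word about why each boundary component of a simply connected compact $3$-manifold is a $2$-sphere (e.g.\ via the half-lives-half-dies argument) before capping off and applying Poincar\'e. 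Neither affects the validity of the argument.
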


\begin{proof} 
\begin{compactitem}[--]
\item For $d=2$ the claim is trivial, since every triangulated $2$-ball or $2$-sphere is shellable, and in particular PL, endocollapsible, and LC.
\item For $d=3$, let $M$ be a simply connected $3$-manifold. If $\partial M = \emptyset$, by Perelman's solution of the Poincar\'e conjecture $M$ is a $3$-sphere. By a result of Adiprasito and the author, for any PL triangulation of the $3$-sphere there exists an integer $r$ such that $\sd^r S$ is shellable \cite{AB-SSZ}, hence LC. If $\partial M \ne \emptyset$, let $b$ be the number of connected components of its boundary. Each of these connected components is a simply connected closed $2$-manifold, that is, a $2$-sphere. Via the Poincar\'{e} conjecture, we conclude that $M$ is topologically a ``piece of Swiss cheese with $b-1$ holes'' (that is, the result of removing $b$ disjoint $3$-balls from a $3$-sphere). 

We proceed by induction on $b$. If $b=1$, then $M$ is a PL $3$-ball, thus it has a shellable subdivision by Adiprasito--Benedetti \cite{AB-SSZ} and we are done. If $b \ge 2$, 
 up to replacing the triangulation with an iterated barycentric subdivision of it, we can find an embedded annulus $A$ inside $M$ around one of the holes such that $M$ splits as $B \cup M_1$, where $B$ is a $3$-ball, $B \cap M_1 = A$,  and $M_1$ is a ``piece of Swiss cheese with $b-2$ holes''. By the inductive assumption, up to taking further barycentric subdivisions we can assume that the triangulation of $M_1$ is LC and that of $B$ is shellable (so in particular LC). 
Moreover, any triangulation of $A$ has connected dual graph. Using \cite[Lemma~2.23]{BZ}, we conclude that the resulting subdivision of $M$ is LC.  
\item For $d=4$, Mazur's manifold is not geometrically $1$-connected, and any triangulation of it (being $4$-dimensional) is PL. By Proposition \ref{prp:wall2},(2), any triangulation of Mazur's manifold has collapse depth $\le 1$, hence it cannot be LC by Theorem \ref{thm:cdepth2}.
\item Finally, if $d \ge 5$ every simply connected $d$-manifold is $1$-connected by Theorem \ref{thm:wall}. By Proposition \ref{prp:wall2}, (1), for any PL triangulation $M$ we can find an integer $r$ such that \[\cd \sd^r M \ge 2.\] Via Theorem \ref{thm:cdepth2}, we conclude that $\sd^r M$ is LC. \qedhere
\end{compactitem}
\end{proof}


\v{Z}ivaljevi\'c 's conjecture remains open for closed $4$-manifolds: are there simply connected closed smooth $4$-manifolds that do not admit LC triangulations? Via Theorem \ref{thm:cdepth2} and Proposition \ref{prp:wall2}, this question is equivalent to the following, long-standing open problem:

\begin{problem}[{cf.~Kirby \cite[Problem 4.18]{KirbyProblems}}]
Is every simply-connected closed manifold also geometrically $1$-connected?
\end{problem}

As a remark, we mention that for simply connected smooth manifolds of higher dimension, much more on the Morse vector is known:

\begin{theorem}[Sharko~{\cite[pp.~27--28]{Sharko}}] \label{thm:sharko}
Every contractible smooth $d$-manifold $M$, with $d\ge 6$, admits a handle decomposition with exactly one  $0$-handle, $m$ handles of index $d-3$ and $m$ handles of index $d-2$, where $m$ is the minimal number of generators of the relative homotopy group $\pi_2 (M, \partial M)$. (When $M$ is a ball, $m=0$.)
\end{theorem}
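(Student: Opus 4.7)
The plan is to start from an arbitrary smooth handle decomposition of $M$, guaranteed to exist by Theorem \ref{thm:smh1} applied to any smooth Morse function, and then simplify it by the standard handle moves (cancellation and sliding) from the $h$-cobordism toolbox, exploiting the hypotheses $d \ge 6$ and contractibility to eliminate as many handles as possible. The two main tools are the handle cancellation lemma (if an $(i+1)$-handle attaches transversely to the belt sphere of an $i$-handle at a single point, the two handles cancel) and the Whitney trick, which allows one to remove pairs of algebraically canceling intersections of attaching spheres against belt spheres in the appropriate level set, provided the ambient dimension there is at least~$5$.

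First I would reduce to a single $0$-handle by cancelling excess $0$-handles against $1$-handles, using connectedness of $M$. Next, using $\pi_1(M)=0$ and the Whitney trick (valid since the relevant level manifolds have dimension $d-1 \ge 5$), I would cancel all $1$-handles against $2$-handles, arriving at a handle decomposition with handles only in indices $0, 2, 3, \ldots, d$ (and no $d$-handle since $\partial M \ne \emptyset$ in the interesting case; if $\partial M = \emptyset$ then $M$ is a ball by contractibility plus Corollary \ref{cor:pwb}, and the theorem is trivial with $m=0$). Dualizing the handle decomposition (viewing $M$ as an upside-down cobordism from $\partial M$ to a small collar of a point) and repeating the same argument, I would also eliminate all handles of indices $d-1$ and higher that are ``inessential,'' leaving a handle decomposition concentrated in a short range of middle dimensions.

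The next and most delicate step is to use contractibility to push everything into dimensions $(d-3)$ and $(d-2)$. Since $M$ is contractible, the chain complex computing handle homology is acyclic, so handles come in algebraically canceling pairs of consecutive indices; by the Whitney trick (again using $d \ge 6$, which gives at least dimension~$5$ in every intermediate level set), one can geometrically realize these algebraic cancellations to successively trade pairs of handles in indices $(k, k+1)$ for pairs in $(k+1, k+2)$ without changing the diffeomorphism type of $M$. Pushing all middle handles upward as far as dimension allows leaves exactly the minimum number $m$ of $(d-3)$-handles paired with $m$ $(d-2)$-handles, where $m$ is forced from below by the number of generators needed for $\pi_2(M,\partial M)$; indeed, each $(d-2)$-handle contributes via the Hurewicz map to a generator of $\pi_2(M, \partial M) \cong H_2(M, \partial M)$ dually, and so no fewer than $m$ such handles can suffice.

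The hard part is controlling exactly where the middle handles land: showing that the minimum is achieved in the specific pair of indices $(d-3, d-2)$, and that the count equals the number of generators of $\pi_2(M, \partial M)$ rather than some a priori larger homotopy invariant. This requires the full strength of Sharko's handle cancellation and rearrangement machinery for cobordisms \cite{Sharko}, generalizing Smale's $h$-cobordism theorem; in particular, one needs to verify that after all possible simple-homotopy cancellations, the obstruction to further simplification is precisely captured by the minimal number of $\pi_2(M,\partial M)$-generators, which is the standard output of the relative Hurewicz theorem combined with handle-trading in the simply connected setting.
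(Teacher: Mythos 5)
First, note that the paper does not prove Theorem \ref{thm:sharko} at all: it is quoted from Sharko's book \cite[pp.~27--28]{Sharko} and used as a black box, so there is no in-paper argument to compare yours against. The first stages of your sketch follow the standard route (essentially Wall's Theorem \ref{thm:wall} applied with $p=d-4$): a contractible $M$ is $(d-4)$-connected, hence geometrically $(d-4)$-connected when $d\ge 6$, and after eliminating the $(d-1)$- and $d$-handles by the dual argument one is left with one $0$-handle and equal numbers (forced by $\chi(M)=1$) of $(d-3)$- and $(d-2)$-handles. Up to that point the proposal is reasonable, modulo the usual caveat that $1$-handles are removed by \emph{trading} them for $3$-handles rather than by cancelling them against $2$-handles.

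The genuine gap is in your final paragraph, where the count $m$ is identified. You assert that each $(d-2)$-handle contributes a generator of $\pi_2(M,\partial M)\cong H_2(M,\partial M)$ ``via the Hurewicz map''. This is false in exactly the situation the theorem is about: by Lefschetz duality and contractibility, $H_2(M,\partial M)\cong H^{d-2}(M)=0$, so the homological lower bound you invoke is vacuous, and the relative Hurewicz theorem only computes the quotient of $\pi_2(M,\partial M)$ by the $\pi_1(\partial M)$-action, which is indeed trivial. The actual invariant is $\pi_2(M,\partial M)\cong\pi_1(\partial M)$ (from the long exact sequence of the pair, using $\pi_1(M)=\pi_2(M)=0$), a perfect and in general non-abelian group that is nontrivial precisely when $M$ is not a ball (Corollary \ref{cor:pwb}). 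Whitney-trick and algebraic-cancellation arguments on the handle chain complex are blind to this obstruction, because it lives in the non-simply-connected boundary and not in any homology group with constant coefficients. Proving both that $m$ handles suffice and that fewer cannot --- i.e., relating the geometric handle count to the minimal number of generators of the superperfect group $\pi_1(\partial M)$ --- is precisely the non-routine content of Sharko's theorem, and your sketch defers exactly this step to ``Sharko's machinery''. So the proposal carries out the easy reduction to indices $d-3$ and $d-2$ but does not establish the statement actually being claimed.
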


This of course can be translated in a statement on the discrete Morse vector via Theorem \ref{thm:EquivalenceOfMorseVectors}. Understanding (optimal) smooth/discrete Morse vectors for manifolds that are \emph{not} simply connected, instead, seems to be a much more difficult problem. For a survey of what has been achieved so far, we refer the reader to Sharko~\cite[Chapter~7]{Sharko}.

\subsection*{Acknowledgments} 
Special thanks to Karim Adiprasito, who was of great help and allowed me to include some results of joint work (like Theorem \ref{thm:dmttopl} or Proposition \ref{prp:forman}). A very early version of this work emerged in 2010 after a question by Rade \Ziv and an inspiring conversation with Rob Kirby; to both of them the author is indebted. Thanks also to Alex Suciu, John Shareshian, Marty Scharlemann, Elmar Vogt, G\"{u}nter Ziegler, Lou Billera, Ed Swartz, Anders Bj\"orner, Ne\v{z}a Mramor, Ulrich Bauer, and Jonathan Barmak, for stimulating conversations and helpful references. Finally, thanks to Liviu Nicolaescu for pointing out the reference \cite{Gallais}, to Jos\'e Samper for correcting typos, and to the anonymous referee for improving the presentation.

\bibliographystyle{amsalpha}

\small

\end{document}